\newtheorem{thm}{Theorem}[section]
\newtheorem{cor}[thm]{Corollary}
\newtheorem{prop}[thm]{Proposition}
\newtheorem{lem}[thm]{Lemma}
\newtheorem{conj}[thm]{Conjecture}
\newtheorem*{thm1}{Theorem A}
\newtheorem*{thm2}{Theorem B}
\theoremstyle{definition}
\newtheorem{defn}[thm]{Definition}
\theoremstyle{remark}
\newtheorem{rem}[thm]{Remark}
\let\c@equation\c@thm
\numberwithin{equation}{section}
\newcommand{\upperRomannumeral}[1]{\uppercase\expandafter{\romannumeral#1}}
\title[]{Nonnegative Ricci curvature, almost stability at infinity, and structure of fundamental groups}
\author[]{Jiayin Pan}
\newcommand{\Addresses}{{
		\bigskip
		\footnotesize
		
		Jiayin Pan, \textsc{Department of Mathematics, University of California\ -\ Santa Barbara, CA, USA.}\par\nopagebreak
		\textit{E-mail address}: \texttt{jypan10@gmail.com}

}}
\begin{document}
\begin{abstract}
	We study the fundamental group of an open $n$-manifold $M$ of nonnegative Ricci curvature with additional stability condition on $\widetilde{M}$, the Riemannian universal cover of $M$. We prove that if any tangent cone of $\widetilde{M}$ at infinity is a metric cone, whose cross-section is sufficiently Gromov-Hausdorff close to a prior fixed metric space, then $\pi_1(M)$ is finitely generated and contains a normal abelian subgroup of finite index; if in addition $\widetilde{M}$ has Euclidean volume growth of constant at least $L$, then we can bound the index of that abelian subgroup in terms of $n$ and $L$. In particular, our result implies that if $\widetilde{M}$ has Euclidean volume growth of constant at least $1-\epsilon(n)$, then $\pi_1(M)$ is finitely generated and $C(n)$-abelian.
\end{abstract}
	
\maketitle

In Riemannian geometry, one of the most important problems is to study the interplay between curvature and topology. For example, a classical Bieberbach theorem states that the fundamental group of any complete flat $n$-manifold contains a normal free abelian subgroup of index at most $C(n)$, a constant only depends on $n$. For open $n$-manifolds with non-negative Ricci curvature, a longstanding problem is the Milnor conjecture raised in 1968: any open $n$-manifold of nonnegative Ricci curvature has a finitely generated fundamental group \cite{Mi}. This conjecture remains open today. The Milnor conjecture has been partially confirmed under various conditions; see \cite{An,Li,Liu,Pan1,Pan2,Sor,Wu}.

To understand the fundamental group of a manifold $M$, it is natural to consider the Riemannian universal cover of $M$, where the fundamental group acts as isometries. For instance, since any flat $n$-manifold has Riemannian universal cover $\mathbb{R}^n$, the standard Euclidean space, to study the fundamental group of a flat manifold, it is equivalent to study isometric actions on $\mathbb{R}^n$. As one of the consequences of the main result of this paper, we describe the fundamental group of an open manifold of nonnegative Ricci curvature, whose Riemannian universal cover is close to $\mathbb{R}^n$ in terms of volume growth.

\begin{thm1}
	Given $n\in\mathbb{N}$, there are positive constants $\epsilon(n)$ and $C(n)$ such that the following holds.
	
	Let $(M,x)$ be an open $n$-manifold of $\mathrm{Ric}\ge 0$. If $(\widetilde{M},\tilde{x})$, the Riemannian universal cover of $(M,x)$, has Euclidean volume growth of constant at least $1-\epsilon(n)$, that is,
	$$\limsup\limits_{r\to\infty}\dfrac{\mathrm{vol}(B_r({\tilde{x}}))}{\mathrm{vol}(B_r^n(0))}\ge 1-\epsilon(n),$$
	then $\pi_1(M)$ is finitely generated and contains a normal free abelian subgroup of index at most $C(n)$.
\end{thm1}

In fact, we will prove that a similar conclusion holds for a class of manifolds under a much weaker condition in terms of the geometric stability of $\widetilde{M}$ at infinity (see Theorem B).

The first part of Theorem A partially confirms the Minor conjecture. It is worth mentioning that in Theorem A, the number of generators can be uniformly bounded by some constant $C(n)$; this follows from finite generation and the work of Kapovitch and Wilking \cite{KW}. Our proof uses the strategy introduced in \cite{Pan2}: studying the stability of fundamental group action on $\widetilde{M}$ at infinity via equivariant Gromov-Hausdorff convergence, which we will explain in details later.
In \cite{Pan2}, we studied an open manifold of nonnegative Ricci curvature, whose universal cover $\widetilde{M}$ is $k$-Euclidean at infinity, and showed that the fundamental group is finitely generated. The $k$-Euclidean at infinity condition means that any tangent cone of $\widetilde{M}$ at infinity is a metric cone splitting of a maximal Euclidean factor $\mathbb{R}^k$. However, this cannot be applied to Theorem A, where tangent cones of $\widetilde{M}$ may not have the same maximal Euclidean factor.

The second part of Theorem A generalizes the classical Bieberbach theorem. By \cite{Mi,Gro2}, if $M$ has nonnegative Ricci curvature, then any finitely generated subgroup of $\pi_1(M)$ contains a nilpotent subgroup of finite index (also see \cite{KW}). Recall that for any open manifold of non-negative sectional curvature, Cheeger and Gromoll demonstrated that the fundamental group is always virtually abelian \cite{CG}; later we will see that our main result also generalizes this one (see Theorem B(2,3)). However, an open manifold of non-negative Ricci curvature may admit a torsion free nilpotent non-abelian group, which does not contain any abelian subgroup of finite index, as its fundamental group \cite{Wei}. Thus one can only expect the virtually abelian structure to be true with additional conditions (see Corollaries \ref{cor_v_abel} and \ref{cor_C_abel}).

To state our main result in full generality, we introduce an almost stability condition below. We denote $\mathcal{M}(n,0)$ as the set of all Gromov-Hausdorff limit spaces coming from a sequence of complete $n$-manifolds $(M_i,p_i)$ of $\mathrm{Ric}\ge 0$; we denote $d_{GH}$ as the Gromov-Hausdorff distance.
\begin{defn}\label{def_al_stable}
	Let $\epsilon>0$ and let $C(X)\in \mathcal{M}(n,0)$ be a metric cone. For an open $n$-manifold $M$ of $\mathrm{Ric}\ge 0$, we say that $M$ is \textit{$(C(X),\epsilon)$-stable at infinity}, if any tangent cone of $M$ at infinity is a metric cone $(C(Z),z)$ with vertex $z$ and $d_{GH}(Z,X)\le\epsilon$. 
\end{defn}

Roughly speaking, this almost stability condition requires that all tangent cones of $M$ at infinity are alike metric cones. If $\widetilde{M}$ satisfies the volume growth condition in Theorem A, then by \cite{CC1} any tangent cone of $\widetilde{M}$ at infinity is a metric cone $(C(Z),z)$ with vertex $z$ and $d_{GH}(Z,S^{n-1})\le \epsilon'$ for some $\epsilon'>0$, where $S^{n-1}$ is the $n-1$ dimensional unit sphere; in other words, $\widetilde{M}$ is $(C(S^{n-1}),\epsilon')$-stable at infinity. If $M$ has nonnegative sectional curvature, or $\widetilde{M}$ has Euclidean volume growth with a unique tangent cone at infinity, then $\widetilde{M}$ also satisfies Definition \ref{def_al_stable} with $\epsilon=0$ since any $\widetilde{M}$ has a unique tangent cone as a metric cone at infinity \cite{BGP,CC1}. Also note that Definition \ref{def_al_stable} does not require that all tangent cones at infinity have the same dimension. For example, metric cones $C(S^2_1)$ and $C(S^2_{1}\times S^2_\epsilon)$ have Gromov-Hausdorff close cross-sections when $\epsilon$ is small, where $S_r^2$ is the round $2$-sphere of radius $r$. Even when all tangent cones of $\widetilde{M}$ at infinity have dimension $n$, the almost stability condition allows these tangent cones to have non-homeomorphic cross-sections (see examples in \cite{CN2}). We also compare Definition \ref{def_al_stable} with $k$-Euclidean at infinity condition in \cite{Pan2}, which describes the stability of Euclidean factor at infinity. While the $k$-Euclidean condition and the almost stability condition share some overlap (for example, unique tangent cone at infinity as a metric cone), only the later one can be applied to manifolds in Theorem A, where $\widetilde{M}$ may have different dimensions of maximal Euclidean factors at infinity.

We state our main result.

\begin{thm2}
	Given a metric cone $C(X)\in\mathcal{M}(n,0)$, there is a constant $\epsilon_X>0$ such that the following holds.
	
	Let $M$ be an open $n$-manifold of $\mathrm{Ric}\ge 0$ and let $\widetilde{M}$ be the Riemannian universal cover of $M$. If $\widetilde{M}$ is $(C(X),\epsilon_X)$-stable at infinity, then the followings hold.\\
	(1) $\pi_1(M)$ is finitely generated.\\
	(2) $\pi_1(M)$ contains a normal abelian subgroup of finite index.\\
	(3) If in addition $\widetilde{M}$ has Euclidean volume growth of constant at least $L$, then the index in (2) can be bounded by $C(n,L)$, a constant only depending on $n$ and $L$.
\end{thm2}

We will prove that Theorem B(2,3) holds for the $k$-Euclidean condition as well (see Corollaries \ref{cor_v_abel} and \ref{cor_C_abel}). 

As indicated, Theorem A is a special case of Theorem B.

\begin{proof}[Proof of Theorem A by Theorem B]
	Let $S$ be the unit $(n-1)$-dimensional sphere and $\epsilon_{S}>0$ be the corresponding constant in Theorem B. The metric cone $C(S)$ is the standard $n$-dimensional Euclidean space $\mathbb{R}^n$. By \cite{CC1}, we can choose a constant $\epsilon(n)>0$ such that for any complete manifold $(N,q)$ with $\mathrm{Ric}_N \ge 0$, if
	$${\mathrm{vol}(B_1(q))}\ge (1-\epsilon(n)){\mathrm{vol}(B_1^n(0))},$$
	then
	$$d_{GH}(B_1(q),B_1^n(0))\le \epsilon_S.$$
	With this constant $\epsilon(n)$, relative volume comparison, and the volume growth condition on $\widetilde{M}$, we see that
	$$d_{GH}(r^{-1}B_r(\tilde{x}),B_1^n(0))\le \epsilon_S.$$
	for all $r>0$. For any sequence $r_i\to\infty$ and the convergence
	$$(r_i^{-1}\widetilde{M},\tilde{x})\overset{GH}\longrightarrow (Y,y),$$
	the limit space $(Y,y)$ is a metric cone $(C(Z),z)$ with vertex $z$ \cite{CC1}. We also obtain that $d_{GH}(Z,S)\le\epsilon_S$ from the convergence. Applying Theorem B, we conclude that $\pi_1(M)$ is finite generated and contains a normal abelian subgroup of index at most $C(n)$. To see that this abelian subgroup is torsion free, we further shrink $\epsilon(n)$ if necessary so that the volume growth condition guarantees that $\widetilde{M}$ is diffeomorphic to $\mathbb{R}^n$ \cite{CC1}. Since $\mathbb{R}^n$ does not admit a nontrivial finite group action as covering transformations \cite{Bre}, the result follows.
\end{proof}

Based on Theorems A and B, we propose the following conjecture.

\begin{conj}\label{my_conj}
	Given $n\in\mathbb{N}$ and $L\in(0,1]$, there exists a constant $C(n,L)$ such that the following holds.
	
	Let $M$ be an open $n$-manifold of $\mathrm{Ric}\ge 0$. If the Riemannian universal cover of $M$ has Euclidean volume growth of constant at least $L$, then $\pi_1(M)$ is finitely generated and contains a normal abelian subgroup of index at most $C(n,L)$.
\end{conj}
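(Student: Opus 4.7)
The plan is to reduce the conjecture to Theorem A through a compactness argument on the space of possible cross-sections of tangent cones at infinity. The obstacle, compared with Corollary B, is that once $L$ is not close to $1$ the tangent cones at infinity of $\widetilde{M}$ need no longer all be Gromov-Hausdorff close to a single fixed reference cone $C(X)$, so Theorem A cannot be invoked as a black box.

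First I would observe that for fixed $n$ and $L>0$, the collection $\mathcal{Z}_{n,L}$ of all metric spaces $Z$ that appear as the cross-section of a tangent cone at infinity of some open $n$-manifold with $\mathrm{Ric}\ge 0$ and Euclidean volume growth of constant at least $L$ is Gromov-Hausdorff precompact. This follows from Gromov precompactness applied to the rescalings $r^{-1}B_1(\tilde{x})$---which have uniformly bounded geometry and a uniform volume lower bound by Bishop-Gromov together with the volume hypothesis---combined with the Cheeger-Colding theorem that tangent cones at infinity are metric cones. For each $X\in\mathcal{Z}_{n,L}$, Theorem A supplies $\epsilon_X>0$, so by precompactness one can extract a finite subcover $\mathcal{Z}_{n,L}\subset \bigcup_{j=1}^{N} B_{GH}(X_j,\epsilon_{X_j}/2)$ with $N=N(n,L)$. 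Hence every tangent cone cross-section of $\widetilde{M}$ is $\epsilon_{X_j}$-close to one of the $X_j$ from this finite list.

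If all tangent cones of $\widetilde{M}$ happened to be close to the same $X_j$, Theorem A would immediately give the result with index bound $C(n,L)$. The remaining and, I expect, genuinely hard step is to strengthen Theorem A to the setting in which the reference cone is allowed to vary along scales: at each scale $r$ one is only assumed close to some $C(X_{j(r)})$, with $j\colon (0,\infty)\to \{1,\ldots,N\}$ arbitrary. The equivariant stability argument of \cite{Pan2} exploits a fixed reference limit in order to propagate group-theoretic constraints through scales, so the main obstacle is bridging the scales at which $j(r)$ jumps: by the Colding-Naber examples, two consecutive scales may be close to genuinely non-homeomorphic $X_j$'s, and the corresponding equivariant limits then live on different spaces. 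I would attempt to circumvent this either by a pigeonhole argument extracting a subsequence of scales on which $j(r)$ is constant and then recovering the full $\pi_1(M)$ from such a subsequence, or by proving an abstract compactness theorem for equivariant limit actions of $\pi_1(M)$ that produces only finitely many possible actions up to isomorphism, independent of which $X_j$ is nearest. Either route is, in my view, the crux of the conjecture.
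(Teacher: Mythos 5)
This statement is Conjecture~0.4 in the paper; the paper does not prove it, and explicitly presents it as an open problem motivated by Theorem~A and Corollary~B. You do not prove it either --- your proposal honestly acknowledges that the scale-bridging step is ``the crux of the conjecture'' --- so there is no proof to compare against. What I can do is assess your analysis of the difficulty.

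Your preliminary observations are sound: by Bishop--Gromov and the volume growth hypothesis, the cross-sections of all tangent cones at infinity lie in $\mathcal{M}_{cs}(n,0,\omega_n L)$, which is compact, so one can extract finitely many reference cones $X_1,\dots,X_N$ such that every cross-section encountered is $\epsilon_{X_j}$-close to one of them. You also correctly identify the obstacle: Theorem~A requires \emph{all} tangent cones at infinity to be close to a \emph{single} fixed $C(X)$, and the finite cover provides no such uniformity since the nearest reference may vary with the scale. This matches where the paper leaves things: the closing remark of Section~5 observes that the conjecture would follow if one could show that condition~(2) of Theorem~\ref{main_abel} (property \textit{(P)} for the projected action on the Euclidean factor) holds for every equivariant tangent cone of $(\widetilde{M},\Gamma)$ at infinity, without the $(C(X),\epsilon_X)$-stability hypothesis --- and the paper does not know how to do that in general.

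The weak point in your proposal is the suggested route~(a), the pigeonhole extraction of a subsequence of scales on which the nearest $X_j$ is constant. The machinery underlying Theorem~A --- the critical rescaling argument of Section~3 --- does not operate on an arbitrary unbounded subsequence of scales: it uses the compactness \emph{and connectedness} of the full set $\Omega(\widetilde{M},\Gamma)$ of equivariant tangent cones at infinity (one rescales continuously between two scales exhibiting incompatible behaviors and locates a critical scale in between, where the gap lemma produces a contradiction). A subsequence of scales on which $j(r)$ is constant gives no connected interval of scales with a common reference, so the critical rescaling cannot run, and indeed the very reason a rescaling interval may cross a jump of $j(r)$ is exactly where the Colding--Naber examples you cite would enter. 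Your route~(b) is closer in spirit to what the paper suggests is needed --- a rigidity/stability statement for equivariant limit actions valid across the whole of $\Omega(\widetilde{M},\Gamma)$ --- but as stated it is not yet an argument, and the paper leaves precisely this step open.
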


Establishing the equivariant stability at infinity is the key to prove Theorem B. More precisely, for any sequence $r_i\to\infty$, passing to a subsequence if necessary, we study the equivariant Gromov-Hausdorff convergence \cite{FY}:
$$(r_i^{-1}\widetilde{M},\tilde{x},\pi_1(M,x))\overset{GH}\longrightarrow(\widetilde{Y},\tilde{y},G),$$
where $G$ acts effectively on $\widetilde{Y}$ by isometries.
We call $(\widetilde{Y},\tilde{y},G)$ an equivariant tangent cone of $(\widetilde{M},\pi_1(M,x))$ at infinity. In general, $(\widetilde{Y},\tilde{y},G)$ depends on the scaling sequence $\{r_i\}$. To study the set of all equivariant tangent cones at infinity, we use the structure results of Ricci limit spaces developed by Cheeger, Colding, and Naber \cite{CC1,CC2,CC3,CN}, especially the result that the isometry group of any Ricci limit space is a Lie group \cite{CC2,CN}. The key is to show that under such a geometric stability condition at infinity, $G$-action has certain equivariant stability at infinity as well. In \cite{Pan2}, we showed that if $\widetilde{M}$ is $k$-Euclidean at infinity and $\pi_1(M)$ is abelian, then the projection of $G$-action on the maximal Euclidean factor in $Y$ is independent of the sequence $\{r_i\}$.

\begin{thm}\label{omega_k_eu}\cite{Pan2}
	Let $M$ be an open $n$-manifold of $\mathrm{Ric}\ge 0$. Suppose that $\pi_1(M)$ is abelian and the universal cover $\widetilde{M}$ is $k$-Euclidean at infinity. Then there exist a closed abelian subgroup $K$ of $O(k)$ and an integer $l\in[0,k]$ such that the following holds. 
	
	Let $$(\widetilde{Y},\tilde{y},G)=(\mathbb{R}^k\times C(Z),(0,z),G)$$ be an equivariant tangent cone of $(\widetilde{M},\pi_1(M,x))$ at infinity, where $\mathrm{diam}(Z)<\pi$. Then the projected $G$-action on $\mathbb{R}^k$-factor $(\mathbb{R}^k,0,p(G))$ satisfies $p(G)=K\times \mathbb{R}^l$ with $K$ fixing $0$ and the subgroup $\{e\}\times \mathbb{R}^l$ acting as translations in the $\mathbb{R}^k$-factor, where $p:\mathrm{Isom}(\mathbb{R}^k\times C(Z))\to \mathrm{Isom}(C(Z))$ is the natural projection.
\end{thm}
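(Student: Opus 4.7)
The plan is to separate the theorem into two parts: the structural form of $p(G)$ at each individual equivariant tangent cone, and the scale-independence of the resulting pair $(K, l)$. For the structural part, I fix a sequence $r_i \to \infty$ with equivariant limit $(\widetilde{Y}, \tilde{y}, G) = (\mathbb{R}^k \times C(Z), (0,z), G)$. Since $\mathrm{Isom}(\widetilde{Y})$ is a Lie group by Cheeger--Colding--Naber and $G$ is the equivariant Gromov--Hausdorff limit of the abelian group $\pi_1(M)$, it follows that $G$ is a closed abelian subgroup, and so is $p(G) \subset \mathrm{Isom}(\mathbb{R}^k) = O(k) \ltimes \mathbb{R}^k$. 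A classification of closed abelian subgroups of this semidirect product then yields $p(G) = K \times T$, where $T \cong \mathbb{R}^l$ is the translational part (so $l \leq k$ automatically) and $K \subset O(k)$ is compact abelian, forced by commutativity with the translations in $T$ to fix $T$ pointwise.

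The substance lies in the claim that $(K, l)$ is the same for every scaling sequence. My approach is to give intrinsic, scale-free characterizations. For $l$: the translational directions at infinity correspond to elements of $\pi_1(M)$ whose iterates escape linearly in $\widetilde{M}$, and the rank of this asymptotic translational subgroup is manifestly independent of the scaling sequence. For $K$: its elements should arise from sequences of group elements with sublinear displacement whose $\mathbb{R}^k$-projections converge to nontrivial rotations. Here the $k$-Euclidean hypothesis is essential, as it ensures that the $\mathbb{R}^k$-factor is canonical across all tangent cones (being the maximal Euclidean splitting), which lets the resulting rotation group be identified consistently from one scaling sequence to another.

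The main obstacle is ensuring that $K$ is literally the same subgroup of $O(k)$ across different sequences, not merely abstractly isomorphic. I would run a continuity argument along the one-parameter family $r \mapsto (r^{-1}\widetilde{M}, \pi_1(M))$: discrete invariants of the projected group must vary semicontinuously with $r$, so a jump at a critical scale would contradict the rigidity of closed abelian subgroups of $\mathrm{Isom}(\mathbb{R}^k)$ combined with the stability of the $\mathbb{R}^k$-splitting coming from the $k$-Euclidean condition. The subtle points are that $C(Z)$ does genuinely vary with $r$, so one must isolate the $\mathbb{R}^k$-projection throughout the argument, and that the abelianness of $\pi_1(M)$ is crucial, since non-abelian groups admit richer limit behavior that need not stabilize in this way.
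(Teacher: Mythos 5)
Your structural claim contains a genuine error. It is not true that every closed abelian subgroup of $\mathrm{Isom}(\mathbb{R}^k)=O(k)\ltimes\mathbb{R}^k$ decomposes as $K\times\mathbb{R}^l$ with $K$ fixing the origin. The one-parameter screw-motion group in $\mathbb{R}^3$, namely $\{(R_\theta,\theta e_3):\theta\in\mathbb{R}\}$ with $R_\theta$ a rotation about the $e_3$-axis, is a closed abelian subgroup whose only element fixing $0$ is the identity, yet it is not a group of pure translations. The conclusion $p(G)=K\times\mathbb{R}^l$ with $K\subset\mathrm{Iso}_0$ is precisely \emph{property (P)} in the terminology of this paper (Definition \ref{def_linear_P}, Proposition \ref{lin_P_decom}), and establishing it is the main content of the theorem rather than a consequence of abelianness. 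It must be proved from the hypotheses via the equivariant rigidity mechanism, not assumed from a classification.

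There is a second gap of the same nature: even after property (P) is established, the general decomposition of the translational part is $\mathbb{R}^l\times\mathbb{Z}^m$ (again Proposition \ref{lin_P_decom}), and ruling out the $\mathbb{Z}^m$ factor is an additional nontrivial step. Your proposal asserts $T\cong\mathbb{R}^l$ without addressing why no discrete translations can appear in the limit.

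Finally, the scale-independence argument as written is too informal to be evaluated. Phrases like ``elements whose iterates escape linearly'' and ``semicontinuity of discrete invariants along the one-parameter family'' are heuristics, not proofs. The actual mechanism in \cite{Pan2} --- and the one this paper adapts in Section 3 for Theorem \ref{main_omega'} --- is a critical rescaling argument: one assumes two scaling sequences produce inequivalent limits, locates via a minimality principle a ``critical'' intermediate scale $l_i$ at which the structure must jump, and then derives a contradiction from an equivariant Gromov--Hausdorff distance gap between inequivalent isometric actions (Theorem \ref{isom_stable_single} here, Proposition~3.1 in \cite{Pan2}). Your proposal gestures toward a contradiction at a ``critical scale'' but does not supply the gap estimate that makes the contradiction effective, nor does it explain how property (P) is preserved under the relevant rescalings (cf.\ Lemma \ref{lin_P_pass}). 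These are the load-bearing pieces of the argument and cannot be replaced by an appeal to semicontinuity.
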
 

In particular, it follows from Theorem \ref{omega_k_eu} that the limit orbit $G\cdot \tilde{y}$ is always an $l$-dimensional Euclidean subspace regardless of the scaling sequence $\{r_i\}$. By Lemma 2.5 in \cite{Pan1} and Wilking's reduction \cite{Wi}, the connectivity of the orbit $G\cdot \tilde{y}$ confirms the Milnor conjecture when $\widetilde{M}$ is $k$-Euclidean at infinity.

When $\widetilde{M}$ is $(C(X),\epsilon_X)$-stable at infinity, since different tangent cones of $\widetilde{M}$ at infinity may have different maximal Euclidean factors, one can not expect that they have the same projection of $G$-action to the maximal Euclidean factor. Nonetheless, we show that the limit orbit at the base point is independent of the scaling sequence: it is always an $l$-dimensional Euclidean subspace.

\begin{thm}\label{main_omega}
	Let $C(X)\in \mathcal{M}(n,0)$ be a metric cone and let $\epsilon_X>0$ be the constant in Theorem B. For any open $n$-manifold $M$ with $\mathrm{Ric}\ge 0$, suppose that its universal cover $\widetilde{M}$ is $(C(X),\epsilon_X)$-stable at infinity and $\pi_1(M)$ is nilpotent, then there exists an integer $l\in[0,n]$ such that any equivariant tangent cone of $(\widetilde{M},\pi_1(M,x))$ at infinity $(C(Z),z,G)$ has orbit $G\cdot z$ as an $l$-dimensional Euclidean subspace of $C(Z)$.
\end{thm}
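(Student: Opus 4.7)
The plan is to mirror the strategy of Theorem \ref{omega_k_eu} from \cite{Pan2}: first establish that in each individual equivariant tangent cone the base-point orbit is a Euclidean subspace, and then argue by critical rescaling that its dimension is an invariant of $(\widetilde{M},\pi_1(M,x))$ and not of the chosen scaling sequence.

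For the first step, fix an equivariant tangent cone $(C(Z),z,G)$ and split $C(Z) = \mathbb{R}^{k_Z} \times C(Z_0)$ into its maximal Euclidean factor and a complementary cone. Any isometry of $C(Z)$ preserves this splitting and can move the vertex only within $\mathbb{R}^{k_Z} \times \{z_0\}$, so $G \cdot z \subset \mathbb{R}^{k_Z}$. Since $G$ is an equivariant Gromov-Hausdorff limit of the nilpotent group $\pi_1(M,x)$ and $\mathrm{Isom}(C(Z))$ is a Lie group by \cite{CC2,CN}, the limit group $G$ is virtually nilpotent. The classification of virtually nilpotent closed subgroups of $\mathrm{Isom}(\mathbb{R}^{k_Z})$, together with displacement estimates at scales $r_i^{-1}$ in the spirit of \cite{Pan2}, then yields that $G \cdot z$ is a closed affine subspace of some dimension $l_Z$.

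For the second step, I would argue by contradiction. Suppose two sequences $r_i \to \infty$ and $s_i \to \infty$ give equivariant tangent cones with orbit dimensions $l_1 < l_2$. Interpolating between these scales and using compactness of equivariant Gromov-Hausdorff convergence, I would locate a critical scale across which $l_Z$ drops. All tangent cones along the interpolation have cross-sections within $\epsilon_X$ of $X$ by the $(C(X),\epsilon_X)$-stability, but the jump in orbit dimension should translate into a structural difference between the two sides of the critical scale that becomes incompatible with this uniform closeness once $\epsilon_X$ is taken sufficiently small.

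The main obstacle is the critical rescaling step. In \cite{Pan2}, all tangent cones share a single maximal Euclidean factor $\mathbb{R}^k$ coming from the $k$-Euclidean condition, which supplies a rigid framework for tracking the projected $G$-action across scales. Here the maximal Euclidean factors $\mathbb{R}^{k_Z}$ may differ in dimension between tangent cones, so the comparison must be made indirectly through the approximate common cross-section $X$. One therefore needs a quantitative rigidity statement---essentially a semicontinuity of $l_Z$ as a function of the cross-section GH-distance to $X$---strong enough to turn an orbit-dimension jump into a cross-section that is farther than $\epsilon_X$ from $X$. The extension from the abelian assumption of Theorem \ref{omega_k_eu} to the nilpotent assumption here should reduce to the abelian case via the identity component of $G$ and standard Lie-theoretic considerations on nilpotent limit actions.
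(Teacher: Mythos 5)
Your plan has two genuine gaps, both traceable to the same missing device. For the first step, being a closed nilpotent subgroup of $\mathrm{Isom}(\mathbb{R}^{k_Z})$ does not force the orbit of the origin to be an affine subspace: a one-parameter screw motion whose axis misses $0$ generates a closed abelian subgroup of $\mathrm{Isom}(\mathbb{R}^3)$ whose orbit of $0$ is a helix. The paper rules out such rotational components not cone-by-cone but uniformly, by establishing property \textit{(Q)} for every equivariant tangent cone through the critical rescaling argument; it is property \textit{(Q)} together with Lemma \ref{lin_Q_prop} and Proposition \ref{lin_P_decom} that force the projected action on the Euclidean factor to split as isotropy times translations. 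So ``show the orbit is Euclidean in each tangent cone'' is not a free-standing first step; it is entangled with the global stability argument. For the second step, the mechanism you propose---semicontinuity of $l_Z$ strong enough to turn an orbit-dimension jump into a cross-section farther than $\epsilon_X$ from $X$---cannot work, because every cross-section is within $\epsilon_X$ of $X$ by hypothesis, so there is nothing to contradict on the cross-section side. The gap statements actually used (Lemma \ref{gap_isotropy}, Lemma \ref{gap_trans}) have a different shape: two tangent cones with cross-sections $\epsilon_X$-close to $X$ and with property \textit{(Q)} are a definite \emph{equivariant} Gromov--Hausdorff distance apart if their isotropy groups have different $X$-marks or their orbits have different dimensions.

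The missing device is the $X$-mark (Proposition \ref{to_model}): for $Z$ with $d_{GH}(Z,X)\le\epsilon_X$, any isometric action on $Z$ is canonically assigned, up to conjugation, a subgroup of $\mathrm{Isom}(X)$ via a $\delta$-approximated Lie group homomorphism, and this assignment is continuous under equivariant Gromov--Hausdorff convergence (Theorem \ref{mark_conv}). This is precisely the indirect comparison through $X$ that you correctly flagged as necessary when the Euclidean factors $\mathbb{R}^{k_Z}$ differ between cones, and it is what makes both the gap lemmas and the critical rescaling argument (Lemma \ref{omega_isotropy}; proof of Theorem \ref{main_omega'}(3)) run. A minor additional point: the paper does not reduce to the abelian case via the identity component of $G$; it handles nilpotent actions directly through the commutator computation in Lemma \ref{commute_E}, which shows that in a nilpotent subgroup of $\mathrm{Isom}(\mathbb{R}^k)$ two elements commute iff their rotational parts do.
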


As pointed out, Theorem \ref{main_omega} implies Theorem B(1). A more detailed description of $G$-action on $C(Z)$ will be given in Section 3 as Theorem \ref{main_omega'}. In fact, in order to prove Theorem \ref{main_omega}, it is essential to know about the $G$-action other than its orbit at $z$, for example, the isotropy subgroup at $z$. To understand the stability among these isotropy subgroups in different limits, we need to look for certain stability among group actions on a family of alike metric cones with possibly different Euclidean factors. 

In Theorem \ref{main_omega}, we assume that $\pi_1(M)$ is nilpotent. Though it is sufficient to consider abelian fundamental groups to prove finite generation with the help of Wilking's reduction \cite{Wi}, the nilpotent situation will be applied to prove Theorem B(2,3). By Theorem B(1) and \cite{KW}, $\pi_1(M)$ contains a normal nilpotent subgroup of index at most $C(n)$ under the assumptions of Theorem B. Hence in order to prove Theorem B(2,3), we are free to assume that $\pi_1(M)$ itself is nilpotent. The equivariant stability at infinity restricts the behavior of any element in $\pi_1(M,x)$ with sufficiently large displacement at $\tilde{x}$. Indeed, we show that such an element $\gamma$ behaves almost as a translation at $\tilde{x}$, in the sense that $d(\gamma^2\tilde{x},\tilde{x})$ is close to $2d(\gamma\tilde{x},\tilde{x})$ (see Lemma \ref{almost_trans}). This is the key geometric input to prove Theorem B(2).

We indicate our proof of Theorem \ref{main_omega}. To understand the equivariant stability at infinity, we study $\Omega(\widetilde{M},\Gamma)$, the set of all equivariant tangent cones of $(\widetilde{M},\Gamma)$ at infinity, which is a compact and connected set with respect to the equivariant Gromov-Hausdorff topology. Two technical tools are developed in \cite{Pan2} to achieve this goal. The first one is an equivariant Gromov-Hausdorff distance gap between different isometric group actions on a fixed closed Riemannian manifold (see Proposition 3.1 in \cite{Pan2}). The second one is a critical rescaling argument (see Sections 2 and 4 in \cite{Pan2}). To make use of these tools to prove Theorem \ref{main_omega}, we further improve them based on some new ideas.

We explain why new improvements are crucial. For the $k$-Euclidean case as Theorem 0.4, it is sufficient to apply the distance gap to isometric actions on the unit sphere $S^{k-1}_1$, which is the cross-section of $\mathbb{R}^k$. Essentially, we deal with group actions on a fixed metric cone $\mathbb{R}^k=C(S^{k-1}_1)$ after some reductions. However, for the $(C(X),\epsilon)$-stable case, we have to deal group actions on a family of metric cones with alike cross-sections. We develop an equivariant Gromov-Hausdorff distance gap among isometric actions on alike cross-sections. For a model cross-section $X$ and a cross-section $Z$ with $d_{GH}(Z,X)\le\epsilon$, we show that when $\epsilon$ is sufficient small, any isometric $H$-action on $Z$ naturally corresponds to a unique isometric $G$-action on $X$ (see Proposition \ref{to_model}). We call $(X,G)$ as the \textit{$X$-mark} of $(Z,H)$. For example, considering $X=S^2_1$ as the model space and $Z=S^2_{1}\times S^2_{\epsilon}$ with small $\epsilon$, for a $S^1\times S^1$ rotational action on $Z$, we can naturally correspond it to a $S^1$ rotational action on the model $X$; thus $(Z,S^1\times S^1)$ has $S^2_1$-mark $(S^2_1,S^1)$. We use this $X$-mark to compare group actions on spaces being close to $X$ and establish the equivariant Gromov-Hausdorff distance gap (see Theorem \ref{isom_stable}). Note that the $X$-mark may not distinguish different actions on $Z$: for example $(S^2_{1}\times S^2_{\epsilon},S^1\times \{e\})$ and $(S^2_{1}\times S^2_{\epsilon},S^1\times S^1)$ have the same $S^2_1$-mark. To handle this ambiguity when proving Theorem \ref{main_omega}, we also need to largely modify the critical rescaling argument (see Section 3 for more details).

We organize the paper as below. In Section 1, after briefly recalling some basic facts on equivariant Gromov-Hausdorff convergence, we develop a method to study isometric actions on spaces that are very close to a fixed space; we also prove the equivariant Gromov-Hausdorff gap mentioned before (see Theorem \ref{isom_stable}). We study certain isometric actions on metric cones with special properties in Section 2, as preparations to prove a detailed version of Theorem \ref{main_omega} by using critical rescaling arguments in Section 3 (see Theorem \ref{main_omega'}). We prove Theorem B(2) and (3) in Sections 4 and 5 respectively, with the $k$-Euclidean case included (see Corollaries \ref{cor_v_abel} and \ref{cor_C_abel}).

\tableofcontents

The author would like to thank professor Xiaochun Rong for his encouragement and numerous helpful discussions. The author would like to thank professor Guofang Wei for her suggestions when preparing the paper.\\

We use the notations below throughout the paper:\\
$\cdot$ $(M,x)$ a pointed complete Riemannian manifold.\\
$\cdot$ $(\widetilde{M},\tilde{x})$ the Riemannian universal cover of $(M,x)$.\\
$\cdot$ $\pi_1(M,x)$ the fundamental group of $M$ at $x$.\\
$\cdot$ $\mathrm{Isom}(X)$ the isometry group of a metric space $X$.\\
$\cdot$ $(X,x,G)$ a pointed metric space $(X,x)$ with a closed subgroup $G\subseteq \mathrm{Isom}(X)$. In particular, this means that $G$-action is effectively on $X$.\\
$\cdot$ $G\cdot x$ the $G$-orbit at $x$.\\
$\cdot$ $\mathrm{Iso}_xG$ the isotropy subgroup of $G$ at $x$.\\
$\cdot$ $C(Z)$ the metric cone over a compact metric space $Z$.\\
$\cdot$ $\mathcal{M}(n,\kappa)$ the set of all Ricci limit spaces coming from a sequence of complete $n$-manifolds $(M_i,p_i)$ of $\mathrm{Ric}\ge (n-1)\kappa$.\\
$\cdot$ $\mathcal{M}(n,\kappa,v)$ the set of all Ricci limit spaces coming from a sequence of complete $n$-manifolds $(M_i,p_i)$ of $\mathrm{Ric}\ge (n-1)\kappa$ and $\mathrm{vol}(B_1(p_i))\ge v$.\\
$\cdot$ $\mathcal{M}_{cs}(n,0)=\{Z\ |\ C(Z)\in\mathcal{M}(n,0)\}$.\\
$\cdot$ $\mathcal{M}_{cs}(n,0,v)=\{Z\ |\ C(Z)\in\mathcal{M}(n,0,v)\}$.\\
$\cdot$ $\overset{H}\longrightarrow$ Hausdorff convergence.\\
$\cdot$ $\overset{GH}\longrightarrow$ Gromov-Hausdorff convergence or equivariant Gromov-Hausdorff convergence, depending on the context.\\
$\cdot$ $d_{GH}$ Gromov-Hausdorff distance or equivariant Gromov-Hausdorff distance, depending on the context.\\
$\cdot$ $\Omega(M,G)$ the set of all equivariant tangent cones of $(M,x,G)$ at infinity, endowed with equivariant Gromov-Hausdorff topology.\\
$\cdot$ $[\alpha,\beta]=\alpha\beta\alpha^{-1}\beta^{-1}$ the commutator of two elements $\alpha$ and $\beta$ in a group.\\ 
$\cdot$ $C_1(G)=[G,G]$ the subgroup of $G$ generated by all commutators.\\
$\cdot$ $C_{k+1}(G)=[C_k(G),G]$, $k\ge 1$.\\
$\cdot$ $Z(G)$ the center of $G$.\\
$\cdot$ $[G:H]$ the index of subgroup $H$ in $G$.

\section{Stability of isometric actions among alike cross-sections}
	
In this section, we fix a cross-section $X\in\mathcal{M}_{cs}(n,0)$ as a model space. Note that by splitting theorem \cite{CC1}, $\mathrm{diam}(X)\le \pi$ for all $X\in\mathcal{M}_{cs}(n,0)$. The main interesting case is $\mathrm{diam}(X)=\pi$, though it is not necessary to assume so. If $\mathrm{diam}(X)<\pi$, then Theorem B(1,2) holds trivially. In fact, we can choose $\epsilon_X>0$ sufficiently small so that $\mathrm{diam}(Y)<\pi$ for all $Y$ with $d_{GH}(Y,X)\le\epsilon_X$. With this $\epsilon_X$, if $\widetilde{M}$ is $(C(X),\epsilon_X)$-stable at infinity, then $\widetilde{M}$ is $0$-Euclidean at infinity. As a result, $\pi_1(M)$ is finite by Proposition 1.9 in \cite{Pan2}.

For any Ricci limit space, its isometry group is always a Lie group \cite{CC2,CN}. This plays a crucial role in our proof.

\begin{thm}\label{limit_Lie}\cite{CC2,CN}
	For any $X\in\mathcal{M}_{cs}(n,0)$, $\mathrm{Isom}(X)$ is a Lie group.
\end{thm}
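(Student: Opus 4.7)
The plan is to reduce the statement to the theorem of Cheeger--Colding and Colding--Naber that the isometry group of any Ricci limit space in $\mathcal{M}(n,0)$ is a Lie group, and then pass from the cone $C(X)$ to its cross-section $X$ by identifying $\mathrm{Isom}(X)$ with the isotropy subgroup of $\mathrm{Isom}(C(X))$ at the cone vertex.

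First I would recall that since $X\in\mathcal{M}_{cs}(n,0)$ by definition means $C(X)\in\mathcal{M}(n,0)$, the cone $C(X)$ is a Ricci limit of complete $n$-manifolds with $\mathrm{Ric}\ge 0$. By the theorem of Cheeger--Colding \cite{CC2} (for the non-collapsed case) and its extension by Colding--Naber \cite{CN} (covering the general collapsed case), $\mathrm{Isom}(C(X))$ is a Lie group in the compact-open topology. This is the single deep input, and I would use it as a black box.

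Next I would construct the embedding $\iota:\mathrm{Isom}(X)\hookrightarrow \mathrm{Isom}(C(X))$. Writing points of $C(X)$ as pairs $(r,z)$ with $r\ge 0$, $z\in X$, and vertex $o=(0,\cdot)$, define $\iota(\phi)(r,z)=(r,\phi(z))$. Because the cone metric is determined by the distance on $X$ together with $r$, this map $\iota(\phi)$ is an isometry of $C(X)$, and $\iota$ is a continuous injective homomorphism. Conversely, any $\psi\in\mathrm{Isom}(C(X))$ fixing $o$ must preserve the distance-to-vertex function, hence preserve each cross-section $\{(r,z)\}$, and therefore restrict to an isometry of $X$. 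This shows that $\iota$ identifies $\mathrm{Isom}(X)$ with the isotropy subgroup $\mathrm{Iso}_o\,\mathrm{Isom}(C(X))$.

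Finally, the isotropy subgroup at a point is closed in the isometry group (as the point-stabilizer under the continuous action on $C(X)$), and closed subgroups of Lie groups are Lie subgroups by the Cartan--von Neumann theorem. Combined with the previous step, this shows $\mathrm{Isom}(X)$ is a Lie group. The only potential pitfall I would double-check is that the topology on $\mathrm{Isom}(X)$ that one naturally works with (compact-open, which agrees with pointwise convergence since $X$ is compact) matches the subspace topology it inherits as $\mathrm{Iso}_o\,\mathrm{Isom}(C(X))$; this is routine because convergence on $X$ under either topology is equivalent to convergence of the extended maps on all compact subsets of $C(X)$ by the cone formula. There is no substantial obstacle beyond invoking the CC--CN theorem.
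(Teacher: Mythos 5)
The paper states this result with citations only and gives no proof; it treats the passage from ``$\mathrm{Isom}$ of a Ricci limit space is Lie'' to ``$\mathrm{Isom}$ of the cross-section is Lie'' as implicit. Your proposal correctly supplies the missing bridge: embed $\mathrm{Isom}(X)$ into $\mathrm{Isom}(C(X))$ as (a subgroup of) the vertex stabilizer, observe that the stabilizer is closed, and conclude by Cartan's closed subgroup theorem. This is the right argument and the right level of generality.

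One small point you should not skip when claiming that $\iota$ \emph{identifies} $\mathrm{Isom}(X)$ with $\mathrm{Iso}_{o}\,\mathrm{Isom}(C(X))$: the statement that an isometry $\psi$ of $C(X)$ fixing $o$ ``preserves each cross-section and therefore restricts to an isometry of $X$'' only gives you, a priori, a family of isometries $\phi_r$ of $X$ (one for each sphere of radius $r$ about $o$), and you need $\phi_r$ to be independent of $r$ in order to recover $\psi=\iota(\phi)$. This follows from the fact that radial geodesics emanating from the vertex of a metric cone with $\mathrm{diam}(X)\le\pi$ are unique, so $\psi$ must send the ray through $(1,z)$ to the ray through $(1,\phi_1(z))$, forcing $\psi(r,z)=(r,\phi_1(z))$ for all $r$. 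Alternatively, you can sidestep surjectivity entirely: for the Lie group conclusion it is enough that $\iota$ is a topological embedding with closed image, and the image of $\iota$ is closed because uniform convergence on the radius-one cross-section, together with the cone formula, controls convergence on every bounded subset of $C(X)$. Either patch is routine, and with it the argument is complete.
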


\begin{rem}\label{split_ortho}
	For $C(Y)\in \mathcal{M}(n,0)$, when $\mathrm{diam}(Y)=\pi$, $C(Y)$ contains at least one line.
	Thus $C(Y)$ splits isometrically as $\mathbb{R}^k\times C(Z)$, where $k\ge 1$ and $\mathrm{diam}(Z)<\pi$. Due to this splitting, the isometry group of the cross-section $Y$ also splits: 
	$$\mathrm{Isom}(Y)=O(k)\times \mathrm{Isom}(Z).$$
\end{rem}

We begin with some preparations on equivariant Gromov-Hausdorff topology \cite{FY}. For any isometric $G$-action on some metric space $Y$, we always assume that $G$-action is effectively and $G$ is a closed subgroup of $\mathrm{Isom}(Y)$.

\begin{defn}\label{eq_equiv_def}
	Let $Y_j$ be a compact metric space with isometric $G_j$-action ($j=1,2$). We say that $(Y_1,G_1)$ is equivalent to $(Y_2,G_2)$, if there is an isometry $F:Y_1\to Y_2$ and a group isomorphism $\psi:G_1\to G_2$ such that $F(g_1\cdot y_1)=\psi(g_1)\cdot F(y_1)$ for any $g_1\in G$ and $y_1\in Y_1$.
\end{defn}

\begin{rem}\label{eq_equiv_rem}
	For a compact metric space $Y$ with an isometric $G$-action, $G$ carries a natural bi-invariant metric coming from its action on $Y$:
	$$d_G(g_1,g_2)=\max_{y\in Y} d_Y(g_1y,g_2y).$$
	It is clear that in Definition \ref{eq_equiv_def}, $\psi: (G_1,d_{G_1})\to (G_2,d_{G_2})$ is an isometry.
\end{rem}

\begin{rem}
For any isometry $F:Y_1\to Y_2$, it induces a group isomorphism $C_F: \mathrm{Isom}(Y_1)\to\mathrm{Isom}(Y_2)$ by conjugation, that is, $C_F(g)=F\circ g\circ F^{-1}$. It is direct to check that $C_F$ satisfies $C_F(g)\cdot F(a)=F(g\cdot a)$ for any $a\in Y_1$ and any $g\in \mathrm{Isom}(Y_1)$. This implies that the group isomorphism $\psi$ in Definition \ref{eq_equiv_def} must be the conjugation map $C_F$. Indeed, consider the composition $\mathrm{id}=F^{-1}\circ F$, then $C_{F^{-1}}\circ \psi$ satisfies
$$C_{F^{-1}}\circ \psi(g)\cdot a=g\cdot a$$
for all $a\in X_1$ and $g\in G_1$. This shows that $\psi= (C_{F^{-1}})^{-1}=C_F$.
\end{rem}

One way to define the Gromov-Hausdorff distance is using approximation maps. Recall that a map $F:Y_1\to Y_2$ between two compact metric spaces is called an  \textit{$\epsilon$-GH approximation}, if\\
(1) $|d_{Y_1}(y,y')-d_{Y_2}(F(y),F(y'))|\le\epsilon$ for all $y,y'\in Y_1$,\\
(2) $F(Y_1)$ is $\epsilon$-dense in $Y_2$.

\begin{defn}
	Let $Y_1$ and $Y_2$ be two compact metric spaces with isometric $G_1$ and $G_2$ actions respectively. We say that
	$$d_{GH}((Y_1,G_1),(Y_2,G_2))\le\epsilon,$$
	if there a triple of maps $(F,\psi,\phi)$ such that\\
	(1) $F:Y_1\to Y_2$ is an $\epsilon$-GH approximation,\\
	(2) $\psi:G_1\to G_2$ is a map such that $d(F(gy),\psi(g)F(y))\le\epsilon$ for all $g\in G_1$ and $y\in Y_1$,\\
	(3) $\phi:G_2\to G_1$ is a map such that $d(F(\phi(g)y),gF(y))\le\epsilon$ for all $g\in G_2$ and $y\in Y_1$.
\end{defn}

\begin{rem}\label{tri}
Using triangle inequality, it is direct to verify that\\
(1) $\psi: G_1 \to G_2$ is an $5\epsilon$-GH approximation with respect to metrics in Remark \ref{eq_equiv_rem},\\
(2) $\psi:G_1\to G_2$ is almost a group homomorphism, that is,
$$d_{G_2}(\psi(gg'),\psi(g)\psi(g'))\le 5\epsilon$$
for all $g,g'\in G_1$.
\end{rem}

In general, $F$ and $\psi$ above may not be continuous. In \cite{MRW}, using center of mass technique, it was proved that when isometry groups are Lie groups and $\epsilon$ is sufficiently small, we can slightly modify the map $\psi:G_1\to G_2$ to a Lie group homomorphism.

\begin{prop}\cite{MRW}\label{good_map}
	Let $G$ be a Lie group with left-invariant Riemannian metric. Then there exists a constant $\epsilon(G)>0$ such that if $\psi:H\to G$ is a map from a Lie group $H$ to $G$ such that
	$$d(\psi(h_1h_2),\psi(h_1)\psi(h_2))\le\epsilon<\epsilon(G)$$
	for all $h_1,h_2\in H$, then there is a Lie group homomorphism $\bar{\psi}:H\to G$ with $d(\bar{\psi}(h),\psi(h))\le 2\epsilon$ for all $h\in H$.
\end{prop}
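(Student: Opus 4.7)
The plan is to correct $\psi$ to an exact homomorphism $\bar\psi$ by a Riemannian center-of-mass construction in $G$, in the spirit of Grove--Karcher. The essential input is that, for each fixed $h\in H$, the family of group elements $\{\psi(hk)\psi(k)^{-1}:k\in H\}$ clusters in an $O(\epsilon)$-neighbourhood of $\psi(h)$. Indeed, the almost-homomorphism bound rewrites after left-translation as $d(\psi(h)^{-1}\psi(hk),\psi(k))\le\epsilon$, and since $G$ carries a left-invariant metric, for $\psi(k)$ in a fixed bounded region right-multiplication by $\psi(k)^{-1}$ has bounded dilation, which converts this into $d(\psi(hk)\psi(k)^{-1},\psi(h))\le C\epsilon$. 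Choosing $\epsilon(G)$ smaller than the convexity radius of $G$ puts the whole cluster inside a strongly convex ball, so the Karcher mean of the cluster is a well-defined single point, and this point is my candidate for $\bar\psi(h)$.

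To verify the homomorphism property I would exploit the invariance of the averaging under the $H$-action. The identity
$$\psi(h_1 h_2 k)\psi(k)^{-1}=\bigl(\psi(h_1(h_2 k))\psi(h_2 k)^{-1}\bigr)\cdot\bigl(\psi(h_2 k)\psi(k)^{-1}\bigr)$$
decomposes each element entering the cluster used to define $\bar\psi(h_1h_2)$ into a product of an element of the $\bar\psi(h_1)$-cluster (indexed by $h_2 k$) and an element of the $\bar\psi(h_2)$-cluster (indexed by $k$). Using the substitution $k\mapsto h_2 k$ together with the equivariance of the Karcher mean under left translation by $\bar\psi(h_1)$, this factorisation yields $\bar\psi(h_1 h_2)=\bar\psi(h_1)\bar\psi(h_2)$ up to errors that are absorbed into the $2\epsilon$-bound. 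The $C^0$-estimate $d(\bar\psi(h),\psi(h))\le 2\epsilon$ is immediate from the minimisation property of the Karcher mean inside the $O(\epsilon)$-cluster, and continuity of $\bar\psi$ (hence smoothness, by the classical fact that continuous homomorphisms between Lie groups are automatically smooth) follows from continuous dependence of the Karcher mean on its input measure.

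The principal obstacle is that $H$ need not be compact, so integrating against Haar measure would diverge. I would circumvent this by averaging only over a fixed precompact symmetric neighbourhood $U$ of $e_H$ equipped with a $U$-invariant measure; this produces $\bar\psi$ locally as a continuous map $U\to G$, whose values on $U\cdot U$ are then pinned down by the (local) homomorphism identity that the averaging argument establishes on $U$. The extension to the identity component $H^\circ$ is forced by the homomorphism property along paths, and the extension to general $H$ is handled coset by coset. Secondary technical nuisances---right translations failing to be isometric for a left-invariant metric, and keeping every relevant cluster strictly within the convexity radius---are absorbed into the choice of $\epsilon(G)$, which depends only on the injectivity radius of $G$ at $e_G$ and a uniform bound for the derivative of conjugation on a fixed compact neighbourhood of $e_G$.
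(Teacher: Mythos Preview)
The paper does not supply its own proof of this proposition; it is quoted from \cite{MRW}, with only the remark that the argument there proceeds via the center-of-mass technique. Your proposal follows precisely that strategy, so at the level of method it agrees with what the paper invokes.

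One substantive point: in every application within this paper (Corollary~\ref{good_map_cor} and downstream), the domain $H$ is a closed subgroup of the isometry group of a compact metric space, hence compact. In the compact case one averages over all of $H$ against normalized Haar measure, the substitution $k\mapsto h_2k$ is a genuine symmetry of the integral, and your factorisation identity yields an exact homomorphism without further work. Your workaround for non-compact $H$---average over a precompact neighbourhood $U$, extract a local homomorphism, then extend---is more fragile than you suggest: the substitution $k\mapsto h_2k$ sends $U$ to $h_2U\neq U$, so a $U$-supported measure is not invariant under it, and the passage from ``local homomorphism on $U$'' to ``global homomorphism on $H$'' requires $H^\circ$ to be generated by $U$ together with a careful consistency check on overlaps. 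These issues can be handled, but since only the compact case is needed here, the cleanest fix is simply to assume $H$ compact and integrate over Haar measure; then your sketch is essentially complete.
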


We formulate a definition for convenience.

\begin{defn}\label{def_good_map}
	Let $(Y_1,G_1)$ and $(Y_2,G_2)$ be two compact metric spaces with isometric actions, where $G_1$ and $G_2$ are Lie groups. We say that $\psi:G_1\to G_2$ is a $\delta$-approximated homomorphism, if\\
	(1) $\psi$ is a Lie group homomorphism, and\\
	(2) there is a $\delta$-GH approximation $f: Y_1\to Y_2$ such that
	$$d(f(gy),\psi(g) f(y))\le\delta$$
	for any $y\in Y_1$ and any $g\in G_1$.
\end{defn}

\begin{cor}\label{good_map_cor}\cite{MRW}
	Let $Y_i$ be a sequence of compact metric spaces with isometric Lie group $G_i$-actions. Suppose that
	$$(Y_i,G_i)\overset{GH}\longrightarrow (X,G).$$
	If $X$ is compact and $G$ is a Lie group, then there is a sequence of $\delta_i$-approximated homomorphisms $\phi_i:G_i\to G$ for some $\delta_i\to 0$. Moreover, $\phi_i$ are $\delta_i$-GH approximations between $G_i$ and $G$ with respect to the metrics in Remark \ref{eq_equiv_rem}.
\end{cor}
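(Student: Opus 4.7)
The plan is to stitch together the triples coming from equivariant Gromov-Hausdorff convergence with the homomorphism correction in Proposition \ref{good_map}, and then verify the two conditions in Definition \ref{def_good_map} together with the moreover statement.

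First I would unpack the convergence hypothesis. Since $(Y_i,G_i)\overset{GH}\longrightarrow (X,G)$, there exists $\epsilon_i\to 0$ and for each $i$ a triple $(F_i,\psi_i,\varphi_i)$ with $F_i:Y_i\to X$ an $\epsilon_i$-GH approximation, $\psi_i:G_i\to G$ and $\varphi_i:G\to G_i$ satisfying the two displayed inequalities in the definition of equivariant GH distance. By Remark \ref{tri}, $\psi_i$ is simultaneously a $5\epsilon_i$-GH approximation between $(G_i,d_{G_i})$ and $(G,d_G)$ and a $5\epsilon_i$-almost homomorphism, i.e.\ $d_G(\psi_i(gg'),\psi_i(g)\psi_i(g'))\le 5\epsilon_i$ for all $g,g'\in G_i$.

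Next I would apply Proposition \ref{good_map} with $H=G_i$ and target the fixed Lie group $G$ equipped with a chosen left-invariant metric. The critical observation is that the constant $\epsilon(G)$ given by Proposition \ref{good_map} depends only on $G$ (which is fixed), so for all sufficiently large $i$ we have $5\epsilon_i<\epsilon(G)$ and may replace $\psi_i$ by a genuine Lie group homomorphism $\phi_i:G_i\to G$ with $d_G(\phi_i(g),\psi_i(g))\le 10\epsilon_i$ for every $g\in G_i$. This already gives condition (1) of Definition \ref{def_good_map}. For condition (2), pick any $y\in Y_i$ and $g\in G_i$ and estimate, using the definition of $d_G$ on orbits,
\begin{align*}
d_X\bigl(F_i(gy),\phi_i(g)F_i(y)\bigr) &\le d_X\bigl(F_i(gy),\psi_i(g)F_i(y)\bigr)+d_X\bigl(\psi_i(g)F_i(y),\phi_i(g)F_i(y)\bigr)\\
&\le \epsilon_i+d_G(\psi_i(g),\phi_i(g))\le \epsilon_i+10\epsilon_i=11\epsilon_i.
\end{align*}
Setting $\delta_i:=11\epsilon_i\to 0$ shows $\phi_i$ is a $\delta_i$-approximated homomorphism.

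Finally, for the moreover part I would combine the bound $d_G(\phi_i(g),\psi_i(g))\le 10\epsilon_i$ with Remark \ref{tri}(1): since $\psi_i$ is a $5\epsilon_i$-GH approximation between $(G_i,d_{G_i})$ and $(G,d_G)$, the triangle inequality immediately gives that $\phi_i$ distorts distances by at most $5\epsilon_i+20\epsilon_i=25\epsilon_i$ and has image that is $(5\epsilon_i+10\epsilon_i)$-dense in $G$, so $\phi_i$ is a $25\epsilon_i$-GH approximation; enlarging $\delta_i$ to $25\epsilon_i$ if necessary completes the proof. There is no real obstacle here beyond bookkeeping; the substantive content lies entirely in Proposition \ref{good_map} and in Remark \ref{tri}, and the corollary is essentially the statement that, once both $G_i$ and $G$ are Lie groups, the almost-homomorphisms built into the definition of equivariant GH convergence can be upgraded to honest Lie group homomorphisms without disturbing any of the approximation properties.
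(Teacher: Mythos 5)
The paper does not actually supply a proof of this corollary: it is stated with a citation to \cite{MRW}, and the argument you give is the natural one the reader is expected to reconstruct from Proposition~\ref{good_map}. Your overall structure — extract triples from the equivariant GH convergence, use Remark~\ref{tri} to get an almost-homomorphism, apply Proposition~\ref{good_map} to rectify it, then verify the two items of Definition~\ref{def_good_map} and the GH-approximation claim — is the right approach and works.

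One technical point deserves more care. Proposition~\ref{good_map} measures both the hypothesis (almost-homomorphism) and the conclusion (closeness of $\bar\psi$ to $\psi$) in a left-invariant \emph{Riemannian} metric on $G$, whereas the estimate $d_G(\psi_i(gg'),\psi_i(g)\psi_i(g'))\le 5\epsilon_i$ from Remark~\ref{tri} and the bound you want in the conclusion are in the bi-invariant orbit metric $d_G$ of Remark~\ref{eq_equiv_rem}, which is in general not Riemannian. Since $X$ is compact, $G$ is a compact Lie group and the two metrics induce the same topology, but the comparison is via a modulus of continuity, not a fixed linear factor. Consequently the literal constants $10\epsilon_i$, $11\epsilon_i$, and $25\epsilon_i$ in your computation are not justified as written: what one actually gets is $d_G(\phi_i(g),\psi_i(g))\le\eta_i$ for some $\eta_i\to 0$ obtained by converting to the Riemannian metric, applying Proposition~\ref{good_map}, and converting back. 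Since the corollary only asserts the existence of some $\delta_i\to 0$, this is harmless — every appearance of a multiple of $\epsilon_i$ in your bookkeeping should simply be replaced by a quantity tending to zero — but the conflation of the two metrics should be made explicit, just as the paper does in the remark following Proposition~\ref{conj}, where exactly this modulus-of-continuity conversion is spelled out for Grove--Karcher's result.
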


\begin{rem}
The maps $\phi_i$ in Corollary \ref{good_map_cor} may not be injective nor be surjective.
\end{rem}

For a fixed space $X\in\mathcal{M}_{cs}(n,0)$, when $Y\in \mathcal{M}_{cs}(n,0)$ is sufficiently close to $X$, we can apply Corollary \ref{good_map_cor} to project any isometric $H$-action on $Y$ to a subgroup of $\mathrm{Isom}(X)$ via the approximated homomorphism. We show that this map is canonical up to conjugations.

\begin{prop}\label{to_model}
	Given $X\in\mathcal{M}_{cs}(n,0)$, there exists positive constants $\epsilon_X,\zeta_X$ and a positive function $\delta(\epsilon)$ with $\lim\limits_{\epsilon\to 0}\delta(\epsilon)=0$ such that for any space $Y\in\mathcal{M}_{cs}(n,0)$ with $d_{GH}(X,Y)=\epsilon\le\epsilon_X$, the following properties holds:\\
	(1) There exists a $\delta(\epsilon)$-approximated homomorphism $$\psi:\mathrm{Isom}(Y)\to\mathrm{Isom}(X).$$
	(2) Let $H$ be a closed subgroup of $\mathrm{Isom}(Y)$ and $\phi:H\to\mathrm{Isom}(X)$ be a $\zeta_X$-approximation homomorphism, where $H$ is a closed subgroup of $\mathrm{Isom}(Y)$, then $\phi$ and $\psi|_H$ are conjugate.\\ 
	(3) $\zeta_X\ge 2\delta(\epsilon)$ for any $\epsilon\in(0,\epsilon_X]$.\\
	(4) $\psi|_{O(k)}$ is injective, where $O(k)$ is the subgroup of $\mathrm{Isom}(Y)$ in Remark \ref{split_ortho}.
\end{prop}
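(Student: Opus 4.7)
The plan is to prove all four parts by combining compactness/contradiction arguments, the Lie-group structure of isometry groups (Theorem \ref{limit_Lie}), Corollary \ref{good_map_cor}, and the classical rigidity of nearby Lie-group homomorphisms into a compact Lie group.

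For (1), I would argue by contradiction: if no uniform function $\delta$ exists, there is a sequence $Y_i \in \mathcal{M}_{cs}(n,0)$ with $d_{GH}(X, Y_i) \to 0$ and some $\eta_0 > 0$ such that no $\eta_0$-approximated homomorphism $\mathrm{Isom}(Y_i) \to \mathrm{Isom}(X)$ exists. By the equivariant Gromov--Hausdorff compactness of \cite{FY} together with Theorem \ref{limit_Lie}, pass to a subsequence with $(Y_i, \mathrm{Isom}(Y_i)) \overset{GH}\longrightarrow (X, G_\infty)$ for some closed Lie subgroup $G_\infty \subseteq \mathrm{Isom}(X)$. Corollary \ref{good_map_cor} then supplies $\delta_i$-approximated homomorphisms $\mathrm{Isom}(Y_i) \to G_\infty \hookrightarrow \mathrm{Isom}(X)$ with $\delta_i \to 0$, the desired contradiction. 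Once $\delta$ is produced, I pick $\epsilon_X > 0$ small enough and any $\zeta_X \ge 2\delta(\epsilon_X)$, which makes (3) tautological.

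For (2), the main step, let $f,g: Y \to X$ be the GH approximations underlying $\phi$ and $\psi|_H$, and let $f^\ast: X \to Y$ be an approximate inverse of $f$. Set $\sigma := g \circ f^\ast : X \to X$; an elementary estimate shows $\sigma$ is a $C\zeta_X$-GH self-approximation of the compact space $X$. An Arzel\`a--Ascoli / compactness argument (or another contradiction sequence) then yields an isometry $T \in \mathrm{Isom}(X)$ with $\sup_{x \in X} d_X(T(x), \sigma(x)) \le \omega(\zeta_X)$, where $\omega(\zeta_X) \to 0$ as $\zeta_X \to 0$. Chasing the defining inequalities gives, for every $h \in H$ and $y \in Y$,
\[
T \phi(h) f(y) \approx T f(hy) \approx g(hy) \approx \psi|_H(h) g(y) \approx \psi|_H(h) T f(y),
\]
so $T \phi(h) T^{-1}$ is uniformly close to $\psi|_H(h)$ on the dense subset $T f(Y) \subseteq X$, hence on all of $X$. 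Both $T \phi T^{-1}$ and $\psi|_H$ are Lie-group homomorphisms into the compact Lie group $\mathrm{Isom}(X)$, and classical rigidity of close homomorphisms into a compact Lie group promotes this approximate conjugacy to an exact one: there exists $T' \in \mathrm{Isom}(X)$ close to the identity with $(T'T)\,\phi(h)\,(T'T)^{-1} = \psi|_H(h)$ for every $h \in H$.

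For (4), suppose $g \in O(k)$ lies in $\ker \psi|_{O(k)}$. The approximation property forces $d_X(f(gy), f(y)) \le \delta(\epsilon)$, hence $d_Y(gy, y) \le \delta(\epsilon) + \zeta_X$ for every $y \in Y$. Restricting to the canonical sub-sphere $S^{k-1} \subseteq Y$ coming from the splitting $C(Y) = \mathbb{R}^k \times C(Z)$, on which $O(k)$ acts in the standard way, this bounds the displacement of every element of $\ker \psi|_{O(k)}$ on $S^{k-1}$ by $\delta(\epsilon_X) + \zeta_X$. But any nontrivial closed normal Lie subgroup of $O(k)$ contains an element of displacement $\ge \pi/2$ on $S^{k-1}$ (for instance $-I$, or a $180^\circ$ two-plane rotation when $SO(k)$ is involved), which is incompatible with the bound once $\delta(\epsilon_X) + \zeta_X < \pi/2$; the kernel must therefore be trivial. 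The main obstacle I expect is step (2): producing the isometry $T$ quantitatively from $\sigma$, and then aligning the resulting approximate-conjugation error with the neighborhood constant supplied by the Lie-rigidity theorem, so that the final inner automorphism carries $\phi$ exactly onto $\psi|_H$ rather than merely onto a nearby homomorphism.
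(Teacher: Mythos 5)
Your proposal is correct in its essential content, and for parts (1), (3), and (4) it follows essentially the same reasoning as the paper: (1) is the same compactness--contradiction argument using Theorem~\ref{limit_Lie} and Corollary~\ref{good_map_cor}; (3) is a routine shrinking of $\epsilon_X$; and (4) uses the displacement of nontrivial subgroups of $O(k)$ on the canonical sub-sphere of $Y$ exactly as the paper does (the paper quotes a conservative constant $1/20$ rather than your $\pi/2$, but the idea is identical).

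For part (2) you take a genuinely different route. The paper argues by contradiction: assuming failure for a sequence $\zeta_i\to 0$, it passes to equivariant GH limits so that $(Y_i,H_i)$, $(X,\psi_i(H_i))$ and $(X,\phi_i(H_i))$ all converge to a common $(X,H_\infty)$, pulls out conjugations $\alpha_i,\beta_i$ aligning the images with $H_\infty$, shows $\alpha_i\circ\psi_i$ and $\beta_i\circ\phi_i$ are pointwise close, and invokes Grove--Karcher (Proposition~\ref{conj}) to contradict non-equivalence. You instead work directly with a single $\phi$: you build the approximate conjugator $T$ from $\sigma=g\circ f^\ast$ (a small GH self-approximation of $X$), chase the defining inequalities to show $T\phi T^{-1}$ is uniformly close to $\psi|_H$, and then invoke Lie rigidity to upgrade to an exact conjugation. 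The paper's route fits naturally into the equivariant-GH-convergence framework it uses throughout and avoids explicit moduli; your route is more elementary and, in principle, quantitatively effective. Both ultimately lean on the same rigidity of close homomorphisms into a compact Lie group, and both implicitly use the fact that small GH self-approximations of the fixed compact space $X$ are close to isometries (the paper needs this to produce the conjugations $\alpha_i,\beta_i$; you use it to produce $T$). One bookkeeping point to fix: you should choose $\zeta_X$ \emph{first}, small enough to feed the self-approximation-to-isometry modulus and the Grove--Karcher threshold for $\mathrm{Isom}(X)$, and only \emph{then} shrink $\epsilon_X$ so that $\delta(\epsilon)\le\zeta_X/2$ on $(0,\epsilon_X]$; picking $\epsilon_X$ first and then $\zeta_X\ge 2\delta(\epsilon_X)$, as you wrote, does not guarantee $\zeta_X$ is small enough for your argument in (2) to apply. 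Also note your conclusion (conjugacy of $\phi$ and $\psi|_H$ as homomorphisms) is formally stronger than what the paper's Proposition~\ref{conj} states (conjugacy of images); the stronger rigidity statement is available in the literature, and in any case only image-conjugacy is needed for uniqueness of the $X$-mark, so there is no gap either way.
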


To prove Proposition \ref{to_model}, we need the following stability result on subgroups of a Lie group by Grove and Karcher \cite{GK}.

\begin{prop}\cite{GK}\label{conj}
	Let $\mu_1,\mu_2:H\to G$ be two Lie group homomorphisms of compact Lie group $H$ into the Lie group $G$ with a bi-invariant Riemannian metric. There exists $\epsilon(G)>0$ such that if $d(\mu_1(h),\mu_2(h))\le\epsilon(G)$ for all $h\in H$, then the subgroups $\mu_1(H)$ and $\mu_2(H)$ are conjugate in $G$.
\end{prop}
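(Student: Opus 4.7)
The plan is to reformulate the problem as finding a fixed point of a compact group action, and then produce such a fixed point by the Karcher center of mass technique. Observe first that $g\in G$ conjugates $\mu_1$ to $\mu_2$, in the sense $\mu_2(h)=g\mu_1(h)g^{-1}$ for all $h\in H$, if and only if $g$ is a fixed point of the map
\[
 h\star g := \mu_2(h)\, g\, \mu_1(h)^{-1}.
\]
Because $\mu_1,\mu_2$ are group homomorphisms, $\star$ is a (left) $H$-action on $G$. Since the metric on $G$ is bi-invariant, each $h\star\,\cdot\,$, being a composition of left and right translations, is an isometry of $G$. So we have reduced the proposition to finding a fixed point of an isometric action of the compact group $H$ on $G$.

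Next I would exploit that this action is a small perturbation of the action fixing $e$. Indeed
\[
 d(h\star e,\,e)=d\bigl(\mu_2(h)\mu_1(h)^{-1},e\bigr)=d(\mu_2(h),\mu_1(h))\le\epsilon,
\]
so the orbit $\mathcal O:=H\star e=\{\mu_2(h)\mu_1(h)^{-1}:h\in H\}$ is entirely contained in the closed ball $\bar B_\epsilon(e)\subset G$. Choose $\epsilon(G)>0$ smaller than the convexity radius of $G$ at $e$ (which is positive because the bi-invariant metric is real-analytic and $e$ has a neighborhood of uniqueness for geodesics). For $\epsilon\le\epsilon(G)$, every pair of points of $\mathcal O$ is joined by a unique minimizing geodesic inside $B_{\epsilon(G)}(e)$, and the function
\[
 \Phi(g) \;=\; \tfrac12\int_H d(g,\,h\star e)^2\,dh
\]
(with $dh$ the normalized Haar measure on $H$) is strictly convex on this ball with a unique minimizer $g^*$ — the Karcher/Riemannian center of mass of $\mathcal O$.

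Now I would verify that $g^*$ is the desired conjugating element. Since $\mathcal O$ is $H$-invariant as a subset of $G$ (by construction, $h_0\star\mathcal O=\mathcal O$ for every $h_0\in H$) and each $h_0\star\,\cdot\,$ is an isometry, one has for any $h_0\in H$
\[
 \Phi(h_0\star g)=\tfrac12\!\int_H d(h_0\star g,h\star e)^2\,dh
 =\tfrac12\!\int_H d(g,h_0^{-1}h\star e)^2\,dh=\Phi(g),
\]
using invariance of Haar measure. Hence $h_0\star g^*$ is also a minimizer, and by uniqueness $h_0\star g^*=g^*$. That is, $\mu_2(h_0)=g^*\mu_1(h_0)(g^*)^{-1}$ for every $h_0\in H$, so $\mu_2(H)=g^*\mu_1(H)(g^*)^{-1}$, as required.

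The main obstacle is quantitative: choosing $\epsilon(G)$ so that uniqueness of the Karcher center of mass is guaranteed. On a Lie group with bi-invariant metric the curvature is nonnegative, so one needs the standard estimate that geodesic triangles with sides at most the convexity radius lie in a totally normal convex ball where the distance-squared function is smoothly strictly convex; for a compact Lie group one gets a positive $\epsilon(G)$ depending only on $G$, and in the general (possibly non-compact) bi-invariant case one uses the same local estimate near $e$. Once this convexity radius is fixed, the construction above works without further care and produces the required $\epsilon(G)$.
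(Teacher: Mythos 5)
Your argument is correct and is essentially the approach of the cited source: the paper itself gives no proof of this proposition but quotes it from Grove--Karcher, whose method is exactly your reformulation of conjugacy as a fixed point of the isometric $H$-action $h\star g=\mu_2(h)\,g\,\mu_1(h)^{-1}$ together with the Karcher center-of-mass of the small orbit $H\star e$. The quantitative point you flag (choosing $\epsilon(G)$ below a convexity radius so the center of mass exists, is unique, and hence is $H$-fixed) is precisely what the Grove--Karcher paper supplies, so no new idea is missing.
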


\begin{rem}
	Proposition \ref{conj} is stated with respect to a bi-invariant Riemannian metric $d_0$ on $G$. However, in practice we can apply this to any bi-invariant distance function $d_1$ on $G$. This follows from the fact that given any $\epsilon>0$, there is $\epsilon'>0$ such that $d_0(g_1,g_2)\le\epsilon$ whenever $d_1(g_1,g_2)\le\epsilon'$ for all $g_1,g_2\in G$.
\end{rem}

We prove Proposition \ref{to_model} as below.

\begin{proof}[Proof of Proposition \ref{to_model}]
	(1) follows directly from Corollary \ref{good_map_cor} and a standard contradiction argument. Suppose that (1) fails, then we would have a contradicting sequence $Y_i\in \mathcal{M}_{cs}(n,0)$ with $d_{GH}(Y_i,X)\to 0$. Passing to a subsequence if necessary, we obtain
	$$(Y_i,\mathrm{Isom}(Y_i))\overset{GH}\longrightarrow (X,G).$$
	By Theorem \ref{limit_Lie}, $G\subseteq \mathrm{Isom}(X)$ is a Lie group. Applying Corollary \ref{good_map_cor} to the above sequence, we result in the desired contradiction.
	
	(2): We may further shrink the constant $\epsilon_X$ that we just obtained from (1). To prove (2), we argue again by contradiction. Suppose that there are sequences $\epsilon_i,\zeta_i\to 0$ and a sequence $(Y_i,H_i)$ with the conditions below:\\
	(i) $d_{GH}(Y_i,X)\le \epsilon_i\to 0$;\\
	(ii) a sequence of $\delta_i$-approximated homomorphisms $\psi_i:H_i\to \mathrm{Isom}(X)$, where $\psi_i$ is given by (1) and $\delta_i=\delta(\epsilon_i)\to 0$;\\
	(iii) a sequence of $\zeta_i$-approximated homomorphisms $\phi_i:H_i\to \mathrm{Isom}(X)$ such that $(X,\psi_i(H_i))$ and $(X,\phi_i(H_i))$ are not equivalent.\\
	We prove that $(X,\psi_i(H_i))$ and $(X,\phi_i(H_i))$ are indeed equivalent for $i$ large, a contradiction, by showing that $\psi_i$ and $\phi_i$ are point-wise close up to an automorphism of $\mathrm{Isom}(X)$ (see Proposition \ref{conj}).
	
	By assumptions, we have
	$$d_{GH}((Y_i,H_i),(X,\psi_i(H_i))\le\delta_i\to 0,\quad d_{GH}((Y_i,H_i),(X,\phi_i(H_i))\le\zeta_i\to 0.$$
	Thus passing to some subsequences, all three sequences $(Y_i,H_i)$, $(X,\psi_i(H_i))$, and $(X,\phi(H_i))$ all converge to the same limit space $(X,H_\infty)$. Therefore, there is Lie group isomorphisms $\alpha_i$ and $\beta_i$, as conjugations in $(\mathrm{Isom}(X),d)$, such that
	$$\alpha_i\circ \psi_i(H_i)\overset{H}\to H_\infty,\quad \beta_i\circ \phi_i(H_i)\overset{H}\to H_\infty.$$
	
	We claim that $\alpha_i\circ \psi_i$ and $\beta_i\circ \phi_i$ are point-wise close, then the result would follow from Proposition \ref{conj}. Suppose that there is a sequence $h_i\in H_i$ and $d_0>0$ such that for all $i$
	$$d(\alpha_i\circ\psi_i(h_i),\beta_i\circ\phi_i(h_i))\ge d_0.$$
    With respect to the equivariant convergence of $(Y_i,H_i)$, we obtain
    $$(Y_i,H_i,h_i)\overset{GH}\longrightarrow (X,H_\infty,h_\infty).$$
    So do $(X,\psi_i(H_i),\psi_i(h_i))$ and $(X,\phi_i(H_i),\phi_i(h_i))$ Hausdorff converge to the same limit $(X,H_\infty,h_\infty)$.
    This implies that $\alpha_i\circ\psi_i(h_i)\to h_\infty$ and $\beta_i\circ \phi_i(h_i)\to h_\infty$. In other words,
    $$d(\alpha_i\circ\psi_i(h_i),\beta_i\circ\phi_i(h_i))\to 0.$$
    We apply Proposition \ref{conj} to conclude that (2) holds.
    
    (3) Since $\delta(\epsilon)\to 0$ as $\epsilon\to 0$, clearly we can shrink $\epsilon_X$ further so that $\delta(\epsilon)\le \zeta_X/2$ for all $\epsilon\in(0,\epsilon_X]$.
    
	(4) Recall that $O(k)$ acts on the Euclidean factor of $C(Y)$ as seen in Remark \ref{split_ortho}. Because any subgroup $H$ of $O(k)$ has displacement at least $1/20$ on $Y$, the image $\psi(H)$ has displacement at least $1/20-\delta(\epsilon)$ on $X$. Further shrink $\epsilon_X$ if necessary, we can assume that $\delta(\epsilon)<1/40$ for all $\epsilon\in (0,\epsilon_X]$. This guarantees that $\psi(H)$ can not be trivial.
\end{proof}

With Proposition \ref{to_model}, we define the notion of $X$-mark, which provides a way to compare isometric actions on alike spaces.

\begin{defn}
	Given $X\in\mathcal{M}_{cs}(n,0)$ and the corresponding $\epsilon_X$ in Proposition \ref{to_model}. For a space $(Y,H)$ with $d_{GH}(X,Y)\le\epsilon_X$, we call $(X,\psi(H))$ in Proposition \ref{to_model} as the \textit{$X$-mark} of $(Y,H)$. Note that Proposition \ref{to_model}(2) assures that the $X$-mark of $(Y,H)$ is unique in the equivariant Gromov-Hausdorff topology.
\end{defn}

For the remaining of this section, we always denote $\epsilon_X>0$ as the constants in Proposition \ref{to_model}. Next we show that the notion of $X$-mark is compatible with equivariant Gromov-Hausdorff convergence, in the sense of that the convergence of spaces implies the convergence of $X$-marks. 

\begin{thm}\label{mark_conv}
	Let $(Y_i,H_i)$ be a sequence of spaces with $Y_i\in \mathcal{M}_{sc}(n,0)$ and $d_{GH}(Y_i,X)\le \epsilon_X$, the constant in Proposition \ref{to_model}. Suppose that
	$$(Y_i,H_i)\overset{GH}\longrightarrow (Y,H)$$
	and each $(Y_i,H_i)$ has $X$-mark $(X,G_i)$. Then
	$$(X,G_i)\overset{GH}\longrightarrow (X,G),$$
	where $(X,G)$ is the $X$-mark of $(Y,H)$.
\end{thm}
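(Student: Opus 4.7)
\textbf{Proof plan for Theorem \ref{mark_conv}.} The plan is to realize the $X$-mark of $(Y,H)$ as the limit of the $X$-marks of $(Y_i,H_i)$ by composing approximated homomorphisms from the two convergences $(Y_i,H_i)\to(Y,H)$ and $(Y,H)\to(X,G)$, and then to pass back to the actual $X$-marks $G_i$ via the uniqueness-up-to-conjugation provided by Proposition \ref{to_model}(2).

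First I would record the setup. Since $d_{GH}(Y_i,Y)\to 0$ and $d_{GH}(Y_i,X)\le\epsilon_X$, the triangle inequality gives $\epsilon:=d_{GH}(Y,X)\le\epsilon_X$, so $(Y,H)$ admits an $X$-mark $(X,G)$ realized by a $\delta(\epsilon)$-approximated homomorphism $\psi:H\to\mathrm{Isom}(X)$ with associated GH approximation $f:Y\to X$ and $\psi(H)=G$. From $(Y_i,H_i)\overset{GH}\longrightarrow(Y,H)$, Corollary \ref{good_map_cor} supplies $\delta_i$-approximated homomorphisms $\phi_i:H_i\to H$ with associated GH approximations $g_i:Y_i\to Y$ and $\delta_i\to 0$. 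Consider the composition $\Psi_i:=\psi\circ\phi_i:H_i\to\mathrm{Isom}(X)$, which is automatically a Lie group homomorphism. A double triangle-inequality argument using $f\circ g_i:Y_i\to X$ as the candidate approximation yields
\[
d\bigl(f(g_i(hy)),\,\Psi_i(h)\,f(g_i(y))\bigr)\le \delta_i+2\delta(\epsilon)
\]
for all $h\in H_i$ and $y\in Y_i$, and an analogous estimate shows $f\circ g_i$ is a $(\delta_i+2\delta(\epsilon))$-GH approximation, so $\Psi_i$ is a $(\delta_i+2\delta(\epsilon))$-approximated homomorphism in the sense of Definition \ref{def_good_map}.

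Next I would invoke uniqueness. After a slight further shrinking of $\epsilon_X$ that upgrades Proposition \ref{to_model}(3) to the strict inequality $2\delta(\epsilon)<\zeta_X$ on $(0,\epsilon_X]$, we have $\delta_i+2\delta(\epsilon)\le\zeta_X$ for $i$ large. Proposition \ref{to_model}(2), applied to $\Psi_i$ and to the canonical map $\psi_i:H_i\to\mathrm{Isom}(X)$ realizing the $X$-mark $G_i$, then produces $\alpha_i\in\mathrm{Isom}(X)$ with $\alpha_i\,\Psi_i(H_i)\,\alpha_i^{-1}=G_i$, making $(X,G_i)$ equivalent to $(X,\Psi_i(H_i))=(X,\psi(\phi_i(H_i)))$ in the sense of Definition \ref{eq_equiv_def} and hence at vanishing equivariant Gromov-Hausdorff distance. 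Finally, since $\phi_i(H_i)$ is Hausdorff-dense in $H$ with respect to the bi-invariant metric of Remark \ref{eq_equiv_rem} (via Remark \ref{tri}) and $\psi$ is a continuous Lie group homomorphism, the image $\psi(\phi_i(H_i))$ is Hausdorff-dense in $\psi(H)=G$ inside $\mathrm{Isom}(X)$; using $\mathrm{id}_X$ as the underlying GH approximation this gives $(X,\psi(\phi_i(H_i)))\overset{GH}\longrightarrow(X,G)$. Combining the two steps by triangle inequality for equivariant Gromov-Hausdorff distance yields $(X,G_i)\overset{GH}\longrightarrow(X,G)$, as required.

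The step I expect to be the main obstacle is the quantitative bookkeeping needed to force $\delta_i+2\delta(\epsilon)\le\zeta_X$ for $i$ large, since Proposition \ref{to_model}(3) as stated only guarantees the non-strict inequality $2\delta(\epsilon)\le\zeta_X$. The resolution should be a further tightening of $\epsilon_X$ so that strict inequality holds on $(0,\epsilon_X]$; one should verify that this additional shrinking remains compatible with all conclusions of Proposition \ref{to_model} already in force, in particular the definition of the $X$-mark itself.
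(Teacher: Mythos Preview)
Your proposal is correct and follows essentially the same route as the paper: compose the approximated homomorphism $\phi_i:H_i\to H$ from Corollary~\ref{good_map_cor} with the marking map $\psi:H\to\mathrm{Isom}(X)$, invoke the uniqueness clause Proposition~\ref{to_model}(2) to identify $\Psi_i(H_i)$ with $G_i$ up to conjugation, and then use Hausdorff density of $\phi_i(H_i)$ in $H$ to conclude $\psi(\phi_i(H_i))\to G$. The paper differs only cosmetically: it treats the case $Y=X$ separately, passes to a subsequential limit $G_\infty$ of $G_i$ first and then shows $G_\infty\sim G$, and records the composite approximation constant as $\delta+\eta_i$ rather than your $\delta_i+2\delta(\epsilon)$ (your constant is in fact the one a careful triangle-inequality computation yields). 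The bookkeeping obstacle you flag---needing strict inequality $2\delta(\epsilon)<\zeta_X$ rather than Proposition~\ref{to_model}(3)'s $2\delta(\epsilon)\le\zeta_X$---is real and is also implicitly present in the paper; your suggested fix of shrinking $\epsilon_X$ slightly further (say to force $3\delta(\epsilon)\le\zeta_X$) is exactly right and harmless, since $\epsilon_X$ is chosen once and for all before Theorem~\ref{mark_conv} is ever applied.
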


\begin{proof}
	We put $\epsilon_i=d_{GH}(Y_i,X)$ and $\delta_i=\delta(\epsilon_i)$ as given by Proposition \ref{to_model}. We first note that the statement is trivial when $Y=X$. In fact, when $Y=X$ it is clear that the limit space $(X,H)$ has itself as its $X$-mark. By Proposition \ref{to_model}, for each $i$ there is $\delta_i$-approximated homomorphism $\psi_i:H_i\to G_i=\phi_i(H_i)$ with $\delta_i\to 0$. This implies that the sequences $\{(Y_i,H_i)\}$ and $\{(X_i,G_i)\}$ share the same limit $(X,H)$. 
	
	For the remaining proof, we assume that limit space $Y$ is not isometric to $X$, that is, $\epsilon_i\ge \epsilon_0>0$ for some $\epsilon_0$. For each $(Y_i,H_i)$, there is a $\delta_i$-approximated Lie group homomorphism $\psi_i:H_i\to \mathrm{Isom}(X)$ with $\psi_i(H_i)=G_i$. Also let $\psi: H\to \mathrm{Isom}(X)$ with $\psi (H)=G$ be a $\delta$-approximated Lie group homomorphism that marks the limit space $(Y,H)$. From Corollary \ref{good_map_cor} and the convergence
	$$(Y_i,H_i)\overset{GH}\longrightarrow (Y,H),$$
	we know that for each $i$ large there is an $\eta_i$-approximated Lie group homomorphism
	$$f_i: H_i\to H$$
	for some $\eta_i\to 0$. Passing to a subsequence if necessary, we obtain
	$$(X,G_i)\overset{GH}\longrightarrow (X,G_\infty),$$
	or equivalently, the Hausdorff convergence
	$$G_i\overset{H}\longrightarrow G_\infty$$
	in $\mathrm{Isom}(X)$. We need to show that $(X,G_\infty)$ is equivalent to $(X,G)$.
	
	Note that $\psi\circ f_i$ gives a $(\delta+\eta_i)$-approximated Lie group homomorphism from $H_i$ to $\psi\circ f_i(H_i)\subset \mathrm{Isom}(X)$. For $i$ large, $\psi\circ f_i$ is $\zeta_X$-approximated because $\zeta_X\ge 2\delta$ and $\eta_i\to 0$, where $\zeta_X$ is the constant in Proposition \ref{to_model}. We see that $(X,G_i)$ is equivalent to $(X,\psi\circ f_i(H_i))$ for $i$ large from Proposition \ref{to_model}(2,3).

	For simplicity, we introduce a notation here: for two subgroups $K_1$ and $K_2$ in $\mathrm{Isom}(X)$, we write $K_1\sim K_2$ if they are conjugate in $\mathrm{Isom}(X)$. From the discussion above, we have
	$$G_i \sim \psi\circ f_i(H_i)\subseteq \psi(H)=G.$$
	Recall that $f_i$ is an $\eta_i$-approximation from $H_i$ to $H$. Consequently, the image $f_i(H_i)$ is $\eta_i$-dense in $H$. Through $\psi:H\to \psi(H)$, $\psi\circ f_i(H_i)$ is $\eta'_i$-dense in $\psi(H)$ for some $\eta'_i\to 0$. In other words, we have Hausdorff convergence
	$$\psi\circ f_i (H_i)\overset{H} \longrightarrow \psi(H)=G.$$
	Together with
	$$\psi\circ f_i(H_i)\sim G_i\overset{H}\longrightarrow G,$$
	we conclude that $G_\infty \sim G$ and complete the proof.
\end{proof}

With Theorem \ref{mark_conv}, we show that given $(X,G)$, there is a uniform gap between any space with $X$-mark $(X,G)$ and any space with higher dimensional $X$-mark.  

\begin{thm}\label{isom_stable}
	Let $(X,G)$ be a space with $X\in \mathcal{M}_{cs}(n,0)$. There exists a constant $\eta>0$, depending on $(X,G)$, such that the following holds.
	
	For any two spaces $(Y_j,K_j)$ $(j=1,2)$ satisfying\\
	(1) $Y_j\in\mathcal{M}_{cs}(n,0)$ and $d_{GH}(Y_j,X)\le \epsilon_X$, the constant in Proposition \ref{to_model},\\
	(2) $(Y_1,K_1)$ has $X$-mark $(X,G)$,\\
	(3) $(Y_2,K_2)$ has $X$-mark $(X,H)$ with $\dim(H)\ge \dim(G)$.\\
	If $$d_{GH}((Y_1,K_1),(Y_2,K_2))\le \eta,$$
	then $(X,H)$ is equivalent to $(X,G)$.
\end{thm}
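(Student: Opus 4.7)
My plan is to argue by contradiction, using Theorem \ref{mark_conv} to reduce the statement to a stability fact about close Lie subgroups of $L := \mathrm{Isom}(X)$. Suppose the theorem fails for some $(X,G)$. Then I can extract sequences $(Y_1^i, K_1^i)$ and $(Y_2^i, K_2^i)$ satisfying conditions (1)--(3), with $d_{GH}\bigl((Y_1^i, K_1^i), (Y_2^i, K_2^i)\bigr) \to 0$, such that the $X$-mark $(X, H_i)$ of $(Y_2^i, K_2^i)$ is never equivalent to $(X, G)$. By precompactness of $\mathcal{M}_{cs}(n,0)$ and of isometric actions in the equivariant GH topology, after passing to subsequences I may assume both $(Y_1^i, K_1^i)$ and $(Y_2^i, K_2^i)$ converge to a common equivariant limit $(Y_\infty, K_\infty)$.

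\medskip

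\noindent Applying Theorem \ref{mark_conv} to the first sequence---whose $X$-marks are constantly $(X, G)$---shows that the $X$-mark $(X, G_\infty)$ of $(Y_\infty, K_\infty)$ is equivalent to $(X, G)$. Applied to the second sequence, the same theorem shows $(X, H_i) \to (X, G_\infty)$ in the equivariant GH topology. Since two $X$-marks are equivalent precisely when they are conjugate subgroups of $L$, after replacing each $H_i$ by a suitable $L$-conjugate I may assume $H_i \to G$ in the Hausdorff topology on closed subgroups of $L$. The problem thus reduces to the following purely Lie-group statement: \emph{if closed subgroups $H_i \subseteq L$ satisfy $H_i \to G$ Hausdorff and $\dim H_i \geq \dim G$, then $H_i$ is conjugate to $G$ in $L$ for all $i$ large.}

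\medskip

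\noindent To establish this Lie-group statement, I first note that dimension is upper semicontinuous under Hausdorff limits of closed subgroups---any limit of one-parameter subgroups of $H_i$ yields a one-parameter subgroup of $G$---so the hypothesis forces $\dim H_i = \dim G$ for $i$ large, whence $\mathfrak{h}_i \to \mathfrak{g}$ in the Grassmannian of $\mathrm{Lie}(L)$. From close Lie subalgebras of the same dimension, together with the exponential map and Proposition \ref{good_map}, one builds for each large $i$ a Lie group homomorphism $\mu_i : G \to L$ with image $H_i$ that is pointwise close to the inclusion $G \hookrightarrow L$; Proposition \ref{conj} (Grove--Karcher) then forces $H_i$ to be conjugate to $G$, giving the desired contradiction. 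The main obstacle I anticipate is exactly this last construction: converting Hausdorff closeness of subgroups into a uniformly close honest homomorphism, especially when the $H_i$ may be disconnected. In that case one must also argue that the component structure of $H_i$ stabilizes under Hausdorff limits, which I would deduce by running the same $\exp$-and-straighten argument componentwise, using Hausdorff convergence of the identity components $H_i^0 \to G^0$ to match components of $H_i$ with those of $G$ for $i$ large.
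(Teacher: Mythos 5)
Your argument tracks the paper's own proof in its opening moves: argue by contradiction, pass to a common equivariant limit of the two contradicting sequences, and apply Theorem~\ref{mark_conv} twice to conclude that the $X$-marks $(X,H_i)$ converge in equivariant GH to the $X$-mark of the limit, which is $(X,G)$. Where you diverge is in the final step. The paper closes the argument in one line by invoking Theorem~\ref{isom_stable_single} (the single-space version from \cite{Pan2}, stated immediately after the statement you are proving): since $d_{GH}((X,G),(X,H_i))\to 0$ and $\dim H_i \ge \dim G$, that theorem directly forces $(X,H_i)$ equivalent to $(X,G)$ for large $i$, contradiction. You instead translate the equivariant GH convergence into Hausdorff convergence of conjugated subgroups of $L=\mathrm{Isom}(X)$ and try to prove from scratch the Lie-group gap fact that underlies Theorem~\ref{isom_stable_single}.

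Your Lie-group sketch is on the right track---the basis-convergence argument giving $\dim H_i \le \dim G$ for large $i$ is sound, and the strategy of straightening to a homomorphism $\mu_i\colon G\to L$ close to the inclusion and then invoking Grove--Karcher (Proposition~\ref{conj}) is exactly the kind of argument that works here (it is close in spirit to the Montgomery--Zippin neighboring-subgroups theorem). But as you yourself flag, the disconnected case is not resolved: when $G/G_0$ is nontrivial one must show that each component of $G$ is matched by a component of $H_i$ and not just that $H_i\supseteq G_0$; your proposed ``run the exp-and-straighten argument componentwise'' is plausible but needs to be carried out (for instance via: $H_i$ conjugate into $G$ by a conjugator near $e$, hence $G_0\subseteq H_i'\subseteq G$, and then Hausdorff density of $H_i'$ in $G$ forces $H_i'/G_0 = G/G_0$). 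The paper avoids this work entirely by outsourcing it to Theorem~\ref{isom_stable_single}. So the route you take is genuinely different in its last step, costs more, and leaves a gap you acknowledge; the reduction via Theorem~\ref{mark_conv} is the same.
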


Theorem \ref{isom_stable} generalizes the theorem below in \cite{Pan2}, which considers group actions on a fixed space $X$. Although the statement of Proposition 3.1 in \cite{Pan2} only covers the case $X$ as any compact Riemannian manifold, its proof actually works for any space $X\in\mathcal{M}_{cs}(n,0)$ through verbatim since the proof only requires that $\mathrm{Isom}(X)$ is a compact Lie group (see Remark 3.5 in \cite{Pan2}).

\begin{thm}\label{isom_stable_single}\cite{Pan2}
	Let $(X,G)$ be a space with $X\in \mathcal{M}_{cs}(n,0)$. There exists a constant $\eta>0$, depending on $(X,G)$, such that the following holds.
	
	For any isometric $H$-action on $X$ with $\dim(H)\ge \dim(G)$, if
	$$d_{GH}((X,G),(X,H))\le\eta,$$
	then $(X,H)$ is equivalent to $(X,G)$.
\end{thm}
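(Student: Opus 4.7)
The plan is to argue by contradiction, extract from the Gromov--Hausdorff convergence a Lie group homomorphism that is pointwise close to the inclusion, and invoke the Grove--Karcher rigidity of Proposition \ref{conj} to produce the conjugating isometry. Suppose no such $\eta$ exists; then there is a sequence of closed subgroups $H_i \subseteq \mathrm{Isom}(X)$ with $\dim(H_i) \ge \dim(G)$ and $d_{GH}((X,H_i),(X,G)) \to 0$, yet no $(X,H_i)$ is equivalent to $(X,G)$. Since $\mathrm{Isom}(X)$ is a compact Lie group by Theorem \ref{limit_Lie}, I may pass to a subsequence so that $H_i \to H_\infty$ in the Hausdorff topology on closed subsets of $\mathrm{Isom}(X)$ (under the bi-invariant metric of Remark \ref{eq_equiv_rem}). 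Hausdorff convergence of subgroups on the fixed space $X$ induces equivariant GH convergence $(X,H_i)\to(X,H_\infty)$ with $F=\mathrm{id}$, so uniqueness of equivariant GH limits forces $(X,H_\infty)$ to be equivalent to $(X,G)$, i.e.\ $H_\infty$ is conjugate to $G$ in $\mathrm{Isom}(X)$. After conjugating each $H_i$ by the fixed element implementing this equivalence (which preserves the equivalence class of the action), I may assume $H_\infty=G$.

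Next, Corollary \ref{good_map_cor} applied to $(X,H_i)\to(X,G)$ yields $\delta_i$-approximated Lie group homomorphisms $\phi_i: H_i \to G$ with $\delta_i\to 0$. Since $H_i\to G$ in Hausdorff topology and both groups act on the same space $X$, the associated GH approximations $f_i:X\to X$ can be taken to be $\mathrm{id}$ up to an additional error tending to zero, so that $d_{\mathrm{Isom}(X)}(\phi_i(h),h)\le \delta_i$ for every $h\in H_i$. I then show $\phi_i$ is an isomorphism onto $G$ for $i$ large. For injectivity, $\ker\phi_i$ is a closed subgroup of $\mathrm{Isom}(X)$ whose every element lies within $\delta_i$ of the identity; since any nontrivial closed subgroup of a compact Lie group has diameter bounded below by a positive constant, $\ker\phi_i=\{e\}$ for $i$ large. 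Hence $\dim\phi_i(H_i)=\dim H_i\ge \dim G$, while $\phi_i(H_i)\subseteq G$ gives $\phi_i(H_i)^0=G^0$. Finally, $\phi_i(H_i)$ is Hausdorff close to $H_i$, hence to $G$; since $G/G^0$ is finite and its cosets in $\mathrm{Isom}(X)$ are separated by a positive distance $\rho$ depending only on $G$, once $\delta_i+d_H(H_i,G)<\rho$ the subgroup $\phi_i(H_i)$ must meet every coset of $G^0$ in $G$, giving $\phi_i(H_i)=G$.

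To conclude, view the inclusion $\iota:H_i\hookrightarrow \mathrm{Isom}(X)$ and $\phi_i:H_i\to \mathrm{Isom}(X)$ as two Lie group homomorphisms that are pointwise within $\delta_i$ under the bi-invariant metric. Proposition \ref{conj} then yields that their images $H_i$ and $G$ are conjugate in $\mathrm{Isom}(X)$, and the conjugating element implements the equivalence $(X,H_i)\sim(X,G)$, contradicting the standing assumption. The main obstacle I anticipate is the surjectivity step $\phi_i(H_i)=G$: it requires a quantitative separation of the cosets of $G^0$ in $G$, which is precisely where the dependence of $\eta$ on $(X,G)$ enters. The injectivity step is easier, resting only on the uniform lower bound on diameters of nontrivial closed subgroups inside the compact Lie group $\mathrm{Isom}(X)$. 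A small additional care is needed to justify replacing $f_i$ by $\mathrm{id}$; this can be arranged by a preliminary conjugation using any isometry $f_\infty$ obtained as a subsequential limit of the $f_i$'s, again without changing the equivalence class of $(X,H_i)$.
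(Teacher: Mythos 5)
Your argument is correct and follows essentially the same route as the proof this theorem rests on (Proposition 3.1 of \cite{Pan2}, which the paper cites and notes works verbatim because $\mathrm{Isom}(X)$ is a compact Lie group): extract MRW approximated homomorphisms $\phi_i:H_i\to G$, get injectivity from the no-small-subgroups property, get surjectivity from $\dim H_i\ge\dim G$ together with density of the image, and conclude conjugacy via Grove--Karcher. One small tidy-up: the density of $\phi_i(H_i)$ in $G$ needed for your surjectivity step is cleanest quoted directly from Corollary \ref{good_map_cor} (the $\phi_i$ are $\delta_i$-GH approximations between the groups with respect to the metrics of Remark \ref{eq_equiv_rem}), rather than routed through Hausdorff closeness of the conjugated $H_i$, which gets slightly tangled with your two preliminary conjugations.
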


\begin{proof}[Proof of Theorem \ref{isom_stable}]
	We argue by contradiction. Suppose that the statement fails, then we would have two sequences $\{(Y_{i,j},K_{i,j})\}_i$ $(j=1,2)$ satisfying the conditions below.\\
	(1) $Y_{i,j}\in \mathcal{M}_{cs}(n,0)$ and $d_{GH}(Y_{i,j},X)\le\epsilon_X$ for all $i,j$;\\
	(2) $(Y_{i,1},K_{i,1})$ has $X$-mark $(X,G)$ for each $i$;\\
	(3) $(Y_{i,2},K_{i,2})$ has $X$-mark $(X,H_i)$ with $\dim(H_i)\ge \dim(G)$, but $(X,H_i)$ is not equivalent to $(X,G)$;\\
	(4) $d_{GH}((Y_{i,1},K_{i,1}),(Y_{i,2},K_{i,2}))\to 0$.\\
	Passing to a subsequence, we obtain the equivariant convergence of two sequences with the same limit:
	$$(Y_{i,1},K_{i,1})\overset{GH}\longrightarrow (Y,K),$$
	$$(Y_{i,2},K_{i,2})\overset{GH}\longrightarrow (Y,K).$$
	By conditions (1,2) and Theorem \ref{mark_conv}, the limit $(Y,K)$ has $X$-mark $(X,G)$. Apply Theorem \ref{mark_conv} again with condition (3), we conclude that
	$$(X,H_i)\overset{GH}\longrightarrow (X,G).$$
	Then the desired contradiction follows from Theorem \ref{isom_stable_single} and the assumption that $\dim(H_i)\ge \dim(G)$.
\end{proof}

For our application, we will actually use a slightly different version of Theorem \ref{isom_stable} (see Proposition \ref{gap_isotropy}).

\section{Isometric actions on metric cones}

This section serves as preparations for Theorem \ref{main_omega}. We study some special isometric actions on metric cones.

Recall that for a metric cone $(C(Z),z)\in \mathcal{M}(n,0)$ with vertex $z$, it splits isometrically as $$(C(Z),z)=(\mathbb{R}^k \times C(Z'),(0,z')),$$
where $\mathrm{diam}(Z')<\pi$. As a result, the isometry group of $C(Z)$ also splits:
$$\mathrm{Isom}(C(Z))=\mathrm{Isom}(\mathbb{R}^k)\times \mathrm{Isom}(C(Z'))=(O(k)\ltimes \mathbb{R}^k)\times \mathrm{Isom}(Z').$$
With this splitting, we write every element in $\mathrm{Isom}(C(Z))$ as $(A,v,\alpha)$. Also, elements in $\mathrm{Isom}(Z)=O(k)\times \mathrm{Isom}(Z'))$ can be written as $(A,0,\alpha)$.

Property \textit{(P)} was introduced in \cite{Pan2} for abelian group actions on $\mathbb{R}^k$. We naturally extend it here to the nilpotent group actions on metric cones.

\begin{defn}\label{def_linear_P}
	Let $(C(Z),z)\in\mathcal{M}(n,0)$ be a metric space with vertex $z$. We say that a nilpotent isometric $G$-action on $(C(Z),z)$ satisfies \textit{property (P)}, if\\ \textit{(P)} for any $(A,v,\alpha)\in G$, $(A,0,\alpha)$ is also in $G$.
\end{defn}

We also introduce the notion of property \textit{(Q)} for $G$-action on a metric cone $C(Z)$ with $d_{GH}(Z,X)\le\epsilon_X$, where $X\in \mathcal{M}_{cs}(n,0)$ is a fixed model space and $\epsilon_X$ is the constant in Proposition \ref{to_model}. Let $(C(Z),z)\in\mathcal{M}(n,0)$ be a metric cone with vertex $z$ and isometric $G$-action, we denote $(C(Z),z,G_\infty)$ as the equivariant tangent cone of $(C(Z),z,G)$ at infinity: $$(j^{-1}C(Z),z,G)\overset{GH}\longrightarrow(C(Z),z,G_\infty).$$

\begin{defn}\label{def_linear_Q}
	Let $(C(Z),z)\in\mathcal{M}(n,0)$ be a metric cone with vertex $z$ and $d_{GH}(X,Z)\le\epsilon_X$. We say that a nilpotent isometric $G$-action on $(C(Z),z)$ satisfies \textit{property (Q)} (with respect to $C(X)$), if\\ \textit{(Q)} $(Z,\mathrm{Iso}_zG)$ and $(Z,\mathrm{Iso}_z G_\infty)$ have the same $X$-mark.
\end{defn}

\begin{prop}\label{lin_P_decom}
	$(C(Z),z,G)$ satisfies \textit{property (P)} if and only if $G$-action fulfills the conditions below:\\
	(1) $G$ admits decomposition $G=\mathrm{Iso}_z G\times \mathbb{R}^{l}\times \mathbb{Z}^{m}$.\\
	(2) Any element in the subgroup $\{e\}\times \mathbb{R}^{l}\times \mathbb{Z}^{m}$ is a translation in the Euclidean factor of $C(Z)$.
\end{prop}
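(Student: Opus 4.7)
The plan is to prove both directions, with nilpotency entering only in the harder direction. The ``if'' direction is immediate: writing any $g \in G$ as $g = (A,0,\alpha)\cdot(I,w,e)$ with the first factor in $\mathrm{Iso}_zG$ and the second a translation, the composition law in $\mathrm{Isom}(C(Z))$ gives $g = (A, Aw, \alpha)$, and $(A, 0, \alpha)$ already lies in $G$, verifying property \textit{(P)}.

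For the ``only if'' direction, I consider the projection $p: \mathrm{Isom}(C(Z)) \to O(k) \times \mathrm{Isom}(Z')$ given by $(A, v, \alpha) \mapsto (A, \alpha)$, restricted to $G$. Property \textit{(P)} says precisely that the formula $s(A, \alpha) := (A, 0, \alpha)$ gives a well-defined map on $p(G)$ landing in $G$; a direct calculation in the product law shows that $s$ is a group homomorphism splitting $p|_G$. The kernel $T := \ker(p|_G)$ is the subgroup of pure translations in $G$, which is a closed subgroup of $\mathbb{R}^k$, hence isomorphic to $\mathbb{R}^l \times \mathbb{Z}^m$ by the classical structure theorem. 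Since $(A, v, \alpha)$ fixes $z = (0, z')$ iff $v = 0$, the image $s(p(G))$ coincides with $\mathrm{Iso}_z G$. This already yields the semidirect product $G = T \rtimes \mathrm{Iso}_z G$.

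To upgrade this to a direct product I invoke nilpotency. A direct computation gives
$$(A, 0, \alpha)(I, v, e)(A, 0, \alpha)^{-1} = (I, Av, e),$$
so the $n$-fold iterated commutator of $(A,0,\alpha)$ against $(I,v,e)$ equals $(I, (A - I)^n v, e)$, and by induction each such iterate lies in $C_n(G)$. Nilpotency forces $(A - I)^n v = 0$ for large $n$. Because $A \in O(k)$ is orthogonal and therefore semisimple, the real Jordan form of $A$ has trivial blocks and $\ker(A - I)^n = \ker(A - I)$ for every $n \ge 1$; hence $(A - I)v = 0$. Thus $\mathrm{Iso}_z G$ commutes with $T$, which yields $G = \mathrm{Iso}_zG \times T$.

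The argument is essentially algebraic bookkeeping; the one genuinely nontrivial step is the commutator/semisimplicity computation. The minor verification to not overlook is that $s$ is well-defined (so that property \textit{(P)} actually produces a section and not merely a multi-valued correspondence), but this is built into the statement of \textit{(P)}, which asserts the existence of $(A,0,\alpha)$ in $G$ depending only on $(A,\alpha)$. I do not anticipate any further obstacle.
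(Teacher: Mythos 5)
Your proof is correct and follows essentially the same route as the paper: extract the isotropy subgroup $\mathrm{Iso}_z G$ and the pure-translation subgroup $T$, use property \textit{(P)} to show $G = \mathrm{Iso}_z G \cdot T$, and then use nilpotency plus the semisimplicity of orthogonal matrices to show the two factors commute. The only organizational difference is that the paper factors the commuting step out into a standalone lemma (Lemma \ref{commute_E}) that treats two arbitrary elements $(A,x)$ and $(B,y)$ of a nilpotent subgroup of $\mathrm{Isom}(\mathbb{R}^k)$, which requires an extra orthogonal-decomposition step at the end; you specialize to the case where the second element is a pure translation $(I,v,e)$, which makes the iterated commutator come out immediately as $(I,(A-I)^n v,e)$ and shortens the argument. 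Both proofs hinge on the same two facts — $C_n(G)=\{e\}$ eventually, and $\ker(A-I)^n = \ker(A-I)$ for $A \in O(k)$ — so I would call this the same approach, just inlined and streamlined.
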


It is clear that (1) and (2) implies property \textit{(P)}. Also, Proposition \ref{lin_P_decom} shows that property \textit{(P)} implies property \textit{(Q)}. For abelian $G$-action, Proposition \ref{lin_P_decom} is clear; for nilpotent one, we need the lemma below. 

\begin{lem}\label{commute_E}
	Let $G$ be a nilpotent subgroup of $\mathrm{Isom}(\mathbb{R}^k)$. Let $(A,x)$ and $(B,y)$ be two elements of $G$. Then $(A,x)$ and $(B,y)$ commutes if and only if $A$ and $B$ commutes.
\end{lem}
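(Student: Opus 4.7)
The plan is to work in $\mathrm{Isom}(\mathbb{R}^k) = O(k) \ltimes \mathbb{R}^k$ with product $(A,x)(B,y)=(AB,\,Ay+x)$. The ``only if'' direction is immediate: the linear part of $(A,x)(B,y)$ equals $AB$, so commuting elements must have commuting linear parts. For the converse, I assume $AB = BA$ and compute directly that
$$[(A,x),(B,y)] = (I,\, w), \qquad w := (A-I)y - (B-I)x;$$
the would-be $O(k)$-component vanishes precisely because $AB = BA$. Thus the task reduces to showing $w = 0$.

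To exploit nilpotency, I observe that since $A$ commutes with $I$ the same formula yields $[(A,x),(I,w)] = (I,\,(A-I)w)$. Iterating, the $m$-fold nested bracket $[(A,x),[(A,x),\ldots,[(A,x),(B,y)]\cdots]]$ equals $(I,\,(A-I)^m w)$ and lies in $C_{m+1}(G)$. Since $G$ is nilpotent, $C_{m+1}(G) = \{e\}$ for some $m$, so $(A-I)^m w = 0$. Because $A$ is orthogonal, $A - I$ is diagonalizable over $\mathbb{C}$, whence $\ker\bigl((A-I)^m\bigr) = \ker(A-I)$, giving $w \in \ker(A-I)$. The symmetric argument with $(B,y)$ in the outer slot shows $w \in \ker(B-I)$.

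To finish, note that by its very definition $w = (A-I)y - (B-I)x$ lies in $\mathrm{Im}(A-I) + \mathrm{Im}(B-I)$. For an orthogonal operator we have $\mathrm{Im}(A-I) = \ker(A-I)^{\perp}$ (since $(A-I)^{T} = A^{-1} - I$ has the same kernel as $A-I$), and analogously for $B$. Hence
$$w \in \ker(A-I)^{\perp} + \ker(B-I)^{\perp} = \bigl(\ker(A-I) \cap \ker(B-I)\bigr)^{\perp}.$$
Combined with $w \in \ker(A-I) \cap \ker(B-I)$ from the previous paragraph, this forces $w = 0$, completing the proof. The only nontrivial steps are the iterated commutator identity and the identity $\ker\bigl((A-I)^m\bigr) = \ker(A-I)$ for orthogonal $A$; both are elementary linear algebra, so I do not anticipate a serious obstacle.
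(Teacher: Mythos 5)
Your proof is correct and follows essentially the same route as the paper: compute $[(A,x),(B,y)]=(I,w)$ with $w=(A-I)y-(B-I)x$, use nilpotency via iterated commutators to get $(A-I)^m w=0$ (hence $w\in\ker(A-I)$, and symmetrically $w\in\ker(B-I)$), then conclude $w=0$ by orthogonality. The only cosmetic difference is in the last step, where you note $w\in\mathrm{Im}(A-I)+\mathrm{Im}(B-I)=\bigl(\ker(A-I)\cap\ker(B-I)\bigr)^{\perp}$ directly, while the paper decomposes $x$ and $y$ along $E\oplus E^{\perp}$ with $E=\ker(A-I)\cap\ker(B-I)$; these are equivalent.
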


\begin{proof}
	The proof is linear algebra. We include the proof for readers' convenience.
	
	By direct calculation, we have
	\begin{align*}
		[(A,x),(B,y)]=&(A,x)\cdot (B,y)\cdot (A,x)^{-1}\cdot (B,y)^{-1}\\
		=& ([A,B],-[A,B]y-ABA^{-1}x+Ay+x).
	\end{align*}
	Clearly if $(A,x)$ and $(B,y)$ commutes, so does $A$ and $B$.
	
	Conversely, if $A$ and $B$ commutes, then
	$$[(A,x),(B,y)]=(I,-y-Bx+Ay+x),$$
	which is a translation. We denote this vector as $w=-y-Bx+Ay+x$.
	
	Since $G$ is nilpotent, after $l$ times of commutator calculation, we result in
	$$[(A,x),[...,[(A,x),(I,w)]]]=(I,0).$$
	It is easy to verify that the left hand side equals to $(I,(A-I)^l w)$. Thus $$(A-I)^l w=0$$ for some $l$. With the fact that $A\in O(k)$, we have
	\begin{center}
		$(A-I)^l w=0\ $ if and only if $\ (A-I)w=0$.
	\end{center}
	Therefore, $Aw=w$. Similarly, we have $Bw=w$. Since $A$ and $B$ commutes, they share the same eigen-space decomposition. We define a subspace $E$ as
	\begin{center}
		$E=\{v\in\mathbb{R}^n\ |\ Av=v=Bv\}$
	\end{center}
	and decompose $\mathbb{R}^n$ as $E+E^\bot$, where $E^\bot$ is the orthogonal complement of $E$. We write $x=x_1+x^\bot$ and $y=y_1+y^\bot$ according to this decomposition. Then $w$ can be written as $Ay^\bot -y^\bot +x^\bot-Bx^\bot$, which is a vector in $E^\bot$. Together with the fact that $w\in E$, we conclude that $w=0$ and complete the proof.
\end{proof}

\begin{proof}[Proof of Proposition \ref{lin_P_decom}]
	Let $K=\mathrm{Iso}_zG$ be the isotropy subgroup at $z$, that is, the subgroup consisting all elements of $G$ with form $(A,0,\alpha)$. Let $T$ be the subgroup of $G$ consisting all elements of $G$ with form $(I,v,e)$. It is clear that $K\cap T=\{(I,0,e)\}$.
	
	It is not difficult to see that $G=TK$. In fact, for any $(A,v,\alpha)\in G$, by assumptions, we have $(A,0,\alpha)\in G$. Thus
	$$(A,v,\alpha)\cdot(A^{-1},0,\alpha^{-1})=(I,v,e)\in G.$$
	Hence we can write $(A,v,\alpha)$ as a product $(I,v,e)\cdot (A,0,\alpha)$.
	
	Also, note that any two elements $(A,0,\alpha)\in K$ and $(I,v,e)\in T$ must commute due to Lemma \ref{commute_E}. This shows that $G$ is isomorphic to $K\times T$. To complete the proof, recall that $T$ is a subgroup consisting of translations, thus $T=\mathbb{R}^l\times \mathbb{Z}^m$ for some integers $l$ and $m$.  
\end{proof}

\begin{lem}\label{lin_P_pass}
	Let $(C(Z),z)$ be a metric cone with vertex $z$ and isometric $G$-action. We consider the equivariant tangent cone at $y$, or at infinity ($r_i\to\infty$):
	$$(r_iC(Z),z,G)\overset{GH}\longrightarrow(C(Z),z,G_z),$$
	$$(r_i^{-1}C(Z),z,G)\overset{GH}\longrightarrow(C(Z),z,G_\infty).$$
	Then both $(C(Z),z,G_z)$ and $(C(Z),z,G_\infty)$ satisfy property \textit{(P)}.
\end{lem}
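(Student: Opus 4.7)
The plan is to exploit the cone self-similarity of $C(Z)$. Writing $C(Z)=\mathbb{R}^k\times C(Z')$, the cone dilation $s_\lambda:C(Z)\to C(Z)$ by factor $\lambda$ satisfies $s_\lambda\circ(A,v,\alpha)\circ s_\lambda^{-1}=(A,\lambda v,\alpha)$, since $s_\lambda$ commutes with the $\mathrm{Isom}(Z')$ action and with $A\in O(k)$, while genuinely scaling the Euclidean translation $v$. Consequently, under the standard identification of the blow-down $r_i^{-1}C(Z)$ (resp.\ blow-up $r_iC(Z)$) with $C(Z)$, every element $g=(A,v,\alpha)\in G$ is represented by $(A,r_i^{-1}v,\alpha)$ (resp.\ $(A,r_iv,\alpha)$). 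This scaling formula is the single computational input of the proof.

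For the blow-down case $G_\infty$ I would first establish the auxiliary fact that for every $(B,u,\beta)\in G$ one has $(B,0,\beta)\in G_\infty$. This is immediate from the constant sequence $h_i\equiv(B,u,\beta)\in G$, whose blow-down representation is $(B,r_i^{-1}u,\beta)\to(B,0,\beta)$. Next, given an arbitrary $g_\infty=(A,w,\alpha)\in G_\infty$ realized by $g_i=(A_i,v_i,\alpha_i)\in G$ with $(A_i,r_i^{-1}v_i,\alpha_i)\to(A,w,\alpha)$, I read off $A_i\to A$ and $\alpha_i\to\alpha$. The auxiliary fact applied to each $g_i$ gives $(A_i,0,\alpha_i)\in G_\infty$; since $G_\infty$ is a closed subgroup of $\mathrm{Isom}(C(Z))$ (equivariant GH limits of closed subgroups remain closed) and $(A_i,0,\alpha_i)\to(A,0,\alpha)$ in $\mathrm{Isom}(C(Z))$, I conclude $(A,0,\alpha)\in G_\infty$, which is exactly property~(P).

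For the blow-up case $G_z$ the argument is more direct. If $g_\infty=(A,w,\alpha)\in G_z$ is realized by $g_i=(A_i,v_i,\alpha_i)\in G$ with $(A_i,r_iv_i,\alpha_i)\to(A,w,\alpha)$, then $r_i\to\infty$ together with boundedness of $r_iv_i$ forces $v_i\to 0$. Hence $g_i\to(A,0,\alpha)$ in $\mathrm{Isom}(C(Z))$ in the original metric, and by closedness of $G$ we get $(A,0,\alpha)\in G$. The constant sequence $h_i\equiv(A,0,\alpha)\in G$ is unaffected by the blow-up identification (there is no translation to rescale), and it produces $(A,0,\alpha)\in G_z$.

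I do not expect any real obstacle: the entire proof reduces to the scaling formula $s_\lambda(A,v,\alpha)s_\lambda^{-1}=(A,\lambda v,\alpha)$ combined with closedness of equivariant GH limits of closed groups. The only subtle point worth flagging is the asymmetry between the two cases: in the blow-up the translation part of $g_i$ already vanishes in the original metric, so $(A,0,\alpha)\in G$, whereas in the blow-down it need not vanish and the constant-sequence auxiliary fact is needed to place $(A_i,0,\alpha_i)$ into $G_\infty$ before passing to the limit.
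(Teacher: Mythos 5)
Your proof is correct and follows essentially the same approach as the paper: exploit the dilation formula $s_\lambda(A,v,\alpha)s_\lambda^{-1}=(A,\lambda v,\alpha)$ on the cone, run a diagonal argument for $G_\infty$, and use closedness of the limit group. The paper treats the $G_z$ case in one line (asserting $G_z = \mathrm{Iso}_zG\times\mathbb{R}^l$ directly), whereas you spell it out via $v_i\to 0$ and closedness of $G$; this is a harmless fleshing-out of the same idea rather than a different route.
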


\begin{proof}
	Let $K$ be the subgroup of $G$ fixing $x$. It is clear that
	$$(r_iC(Z),z,G)\overset{GH}\longrightarrow (C(Z),z,K\times \mathbb{R}^l),$$
	where $l$ is the dimension of the orbit $G\cdot z$ in the Euclidean factor of $C(Z)$, and $\{e\}\times \mathbb{R}^l$ acts as translations.
	
	Next we check that $(C(Z),z,G_\infty)$ satisfies property \textit{(P)}. Let $(A,v,\alpha)$ be an element in $G_\infty$ with $v\not= 0$. Due to the convergence, this means that there are a sequence $(A_i,t_iv_i,\alpha_i)\in G$ with $A_i\to A$, $t_i/r_i\to 1$, $v_i\to v$ and $\alpha_i\to\alpha$. For each fixed integer $j$,
	$$(r_i^{-1}C(Z),z,(A_j,t_jv_j,\alpha_j))\overset{GH}\longrightarrow(\mathbb{R}^k,0,(A_j,0,\alpha_j)).$$
	Thus $(A_j,0,\alpha_j)\in G_\infty$. Since $A_j\to A$ and $\alpha_j\to\alpha$ as $j\to\infty$ and $G_\infty$ is closed, we conclude that $(A,0,\alpha)\in G_\infty$. Therefore, $(C(Z),z,G_\infty)$ satisfies property \textit{(P)}.
\end{proof}

\begin{lem}\label{lin_Q_prop}
	Let $(C(Z),z)=(\mathbb{R}^k\times C(Z'),(0,z'))\in\mathcal{M}(n,0)$ be a metric cone with vertex $z$, $\mathrm{diam}(Z')<\pi$, and $d_{GH}(X,Z)\le\epsilon_X$. Let $\psi:\mathrm{Isom}(Z)\to \mathrm{Isom}(X)$ be the map in Proposition \ref{to_model}. Suppose that $(C(Z),z,G)$ satisfies property \textit{(Q)}. Then for any $(A,v,\alpha)\in G$, $G$ contains $(A,0,\beta)$ with $\psi |_{\mathrm{Isom}(Z')}(\alpha\beta^{-1})=e$. In particular, $(\mathbb{R}^k,0,p_Z(G))$ satisfies property \textit{(P)}, where $p_Z:\mathrm{Isom}(C(Z))\to\mathrm{Isom}(\mathbb{R}^k)$ is the projection.
\end{lem}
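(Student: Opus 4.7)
The plan is to combine the structural result of Lemma \ref{lin_P_pass} with Property \textit{(Q)} to transfer information between $G$ and $G_\infty$. Fix $(A,v,\alpha)\in G$. Under the rescaling $(j^{-1}C(Z),z,G)\to (C(Z),z,G_\infty)$ defining the tangent cone at infinity, the element acts on the rescaled cone as $(A, j^{-1}v, \alpha)$, whose equivariant limit is $(A,0,\alpha)\in G_\infty$; since this limit fixes the vertex $z$, it lies in $\mathrm{Iso}_z G_\infty$. Meanwhile, any element of $\mathrm{Iso}_z G$ is scale-invariant and hence persists in the tangent-cone limit, so $\mathrm{Iso}_z G\subseteq \mathrm{Iso}_z G_\infty$.

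Next, Property \textit{(Q)} forces $\psi(\mathrm{Iso}_z G)$ and $\psi(\mathrm{Iso}_z G_\infty)$ to be conjugate closed subgroups of the compact Lie group $\mathrm{Isom}(X)$. Combined with the containment $\psi(\mathrm{Iso}_z G)\subseteq \psi(\mathrm{Iso}_z G_\infty)$ inherited from the subgroup inclusion, and the fact that conjugate closed subgroups of a compact Lie group share dimension and number of connected components, this forces the equality $\psi(\mathrm{Iso}_z G)=\psi(\mathrm{Iso}_z G_\infty)$. Applying this to $\psi(A,0,\alpha)\in\psi(\mathrm{Iso}_z G_\infty)=\psi(\mathrm{Iso}_z G)$ yields a preimage $(A_0,0,\beta_0)\in \mathrm{Iso}_z G$ with $\psi(A_0,0,\beta_0)=\psi(A,0,\alpha)$.

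The crux is to ensure the first coordinate of the preimage equals $A$, not merely some $A_0$. I would decompose $(A,0,\alpha)=(A_0,0,\beta_0)(B,0,\gamma)$ with $(B,0,\gamma)\in \ker\psi\cap \mathrm{Iso}_z G_\infty$, so that $A=A_0 B$, $\alpha=\beta_0\gamma$, and $\psi_1(B)\psi_2(\gamma)=e$, where $\psi_1=\psi|_{O(k)}$ and $\psi_2=\psi|_{\mathrm{Isom}(Z')}$ have commuting images in $\mathrm{Isom}(X)$ since $O(k)$ and $\mathrm{Isom}(Z')$ commute inside $\mathrm{Isom}(Z)$. It then suffices to locate an element $(B,0,\delta)\in \ker\psi\cap \mathrm{Iso}_z G$ with the same $O(k)$-coordinate $B$: setting $\beta:=\beta_0\delta$ yields $(A,0,\beta)=(A_0,0,\beta_0)(B,0,\delta)\in \mathrm{Iso}_z G\subseteq G$, and a direct computation using $\psi_2(\delta)=\psi_1(B^{-1})=\psi_2(\gamma)$ gives $\psi|_{\mathrm{Isom}(Z')}(\alpha\beta^{-1})=e$. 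The existence of such $(B,0,\delta)$ would follow by applying the same compact-Lie-group analysis to the inclusion $\ker\psi\cap\mathrm{Iso}_z G\subseteq \ker\psi\cap\mathrm{Iso}_z G_\infty$, using injectivity of $\psi|_{O(k)}$ from Proposition \ref{to_model}(4) to match $O(k)$-projections. The main obstacle is precisely this last step, the upgrade from $A_0$ to $A$; it requires a delicate analysis of $\ker\psi$ and the interplay between projections to $O(k)$ and to $\mathrm{Isom}(Z')$, and is where injectivity of $\psi|_{O(k)}$ together with the nilpotent structure of $G$ play essential roles. Once this is established the "in particular" conclusion follows at once: $p_Z(A,0,\beta)=(A,0)\in p_Z(G)$ verifies Property \textit{(P)} for $(\mathbb{R}^k,0,p_Z(G))$.
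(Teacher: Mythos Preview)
Your overall architecture matches the paper's: pass $(A,v,\alpha)$ to $(A,0,\alpha)\in\mathrm{Iso}_z G_\infty$, use Property~\textit{(Q)} together with the inclusion $\mathrm{Iso}_z G\subseteq\mathrm{Iso}_z G_\infty$ to force $\psi(\mathrm{Iso}_z G)=\psi(\mathrm{Iso}_z G_\infty)$, and then pick $(A_0,0,\beta_0)\in\mathrm{Iso}_z G$ with the same $\psi$-image. The divergence is exactly at what you flag as the crux, and there you make the argument far harder than it is.

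The paper dispatches the upgrade from $A_0$ to $A$ in one line. From $\psi(A_0,0,\beta_0)=\psi(A,0,\alpha)$ one has $\psi(AA_0^{-1},0,\alpha\beta_0^{-1})=e$, and Proposition~\ref{to_model}(4) then gives $AA_0^{-1}=I$ directly. The reason this works, and the point you are missing, is that the displacement argument proving Proposition~\ref{to_model}(4) applies verbatim to any element of the form $(C,0,\gamma)$ with $C\neq I$, not only to $(C,0,e)$: since the $\mathrm{Isom}(Z')$-factor fixes the totally geodesic $S^{k-1}\subset Z$ pointwise, the closed subgroup generated by $(C,0,\gamma)$ moves points of $S^{k-1}$ exactly as $\overline{\langle C\rangle}\subset O(k)$ does, hence has displacement $\ge 1/20$ on $Z$ and nontrivial $\psi$-image. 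So in your notation $B=I$ automatically, $(A_0,0,\beta_0)$ already has first coordinate $A$, and $\psi|_{\mathrm{Isom}(Z')}(\alpha\beta_0^{-1})=\psi(I,0,\alpha\beta_0^{-1})=e$.

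Your proposed detour through $\ker\psi\cap\mathrm{Iso}_z G\subseteq\ker\psi\cap\mathrm{Iso}_z G_\infty$ is both unnecessary and unjustified as written: Property~\textit{(Q)} constrains only the \emph{images} $\psi(\mathrm{Iso}_z G)$ and $\psi(\mathrm{Iso}_z G_\infty)$, so you have no conjugacy relation between these kernel intersections to feed back into the compact-Lie-group argument. Also, the nilpotent structure of $G$ plays no role in this lemma; it enters only later, in Proposition~\ref{lin_P_decom} via Lemma~\ref{commute_E}.
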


\begin{proof}
	It is clear that $\mathrm{Iso}_zG$ is a subgroup of $\mathrm{Iso}_z G_\infty$. It follows from the definition that $(Z,\mathrm{Iso}_zG)$ and $(Z,\mathrm{Iso}_z G_\infty)$ have the same $X$-mark, thus
	$$\psi(\mathrm{Iso}_z G_\infty)=\psi (\mathrm{Iso}_zG).$$
	For any $(A,v,\alpha) \in G$, note that $(A,0,\alpha)$ must be in $G_\infty$. Then there is $(B,0,\beta)\in\mathrm{Iso}_z G$ such that
	$$\psi(A,0,\alpha)=\psi(B,0,\beta).$$
	That is,
	$$\psi(AB^{-1},0,\alpha\beta^{-1})=e.$$
	By Proposition \ref{to_model}(3), $\psi|_{O(k)}$ is injective. It follows that $A=B$. Therefore, for any $(A,0,\alpha)\in G$ we have $(A,0,\beta)\in \mathrm{Iso}_zG$ with $\psi(I,0,\alpha\beta^{-1})=e$.
\end{proof}

\begin{lem}\label{lin_Q_conv}
	Let $(C(Z_i),z_i,G_i)$ be a sequence of spaces with $d_{GH}(Z_i,X)\le\epsilon_X$ and property \textit{(Q)}. Suppose that
	$$(C(Z_i),z_i,G_i)\overset{GH}\longrightarrow (C(Z),z,G).$$
	Then the limit space $(C(Z),z,G)$ also satisfies property \textit{(Q)}.
	
	If in addition each $(Z_i,\mathrm{Iso}_{z_i}G_i)$ has the $X$-mark $(X,K_i)$, then $$(X,K_i)\overset{GH}\longrightarrow (X,K),$$
	where $(X,K)$ is the $X$-mark of $(Z,\mathrm{Iso}_z G)$.
\end{lem}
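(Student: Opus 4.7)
The plan is to prove both assertions simultaneously by tracking the $X$-marks of cross-section isotropy actions under the equivariant GH convergence, and then invoking property \textit{(Q)} of each $(C(Z_i),z_i,G_i)$ together with a diagonal argument to identify the limiting $X$-mark in two different ways. Throughout, $\psi : \mathrm{Isom}(Z)\to\mathrm{Isom}(X)$ denotes the approximated homomorphism from Proposition \ref{to_model}.

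First I would extract convergence of the cross-section isotropy actions. Passing to a subsequence, the approximating isometries in the equivariant convergence $(C(Z_i),z_i,G_i)\to (C(Z),z,G)$ restrict (up to small error in the basepoint displacement) to give $(Z_i,\mathrm{Iso}_{z_i}G_i)\overset{GH}\longrightarrow (Z,L)$ for some closed subgroup $L\subseteq\mathrm{Iso}_zG$ (viewed inside $\mathrm{Isom}(Z)$). Theorem \ref{mark_conv} immediately yields $(X,K_i)\overset{GH}\longrightarrow (X,K_\ast)$, where $(X,K_\ast)$ is the $X$-mark of $(Z,L)$. Note that in general $L$ could be strictly smaller than $\mathrm{Iso}_zG$ (isotropy can jump up in the limit), so an additional ingredient is needed to identify $(X,K_\ast)$ with the $X$-mark of $\mathrm{Iso}_zG$ itself.

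The heart of the proof is a diagonal argument across the two scalings. For each $i$, $(C(Z_i),z_i,G_{i,\infty})$ arises as the GH limit of $(r^{-1}C(Z_i),z_i,G_i)$ as $r\to\infty$. Choose $r_i\to\infty$ slowly enough that $(r_i^{-1}C(Z_i),z_i,G_i)\to (C(Z),z,G_\infty)$ as $i\to\infty$ for some tangent cone at infinity of the limit, but fast enough that the sequence is $1/i$-close to $(C(Z_i),z_i,G_{i,\infty})$ for each $i$. The balancing is facilitated by the fact that $r^{-1}C(Z_i)=C(Z_i)$ as a metric space, so only the group actions need to be compared. Now apply the first step to this diagonal convergence: property \textit{(Q)} of each $G_i$ ensures that $(Z_i,\mathrm{Iso}_{z_i}G_{i,\infty})$ has $X$-mark $(X,K_i)$ as well, while repeating the subsequence extraction gives $(Z_i,\mathrm{Iso}_{z_i}G_{i,\infty})\to (Z,L')$ with $L'\subseteq\mathrm{Iso}_zG_\infty$, and Theorem \ref{mark_conv} forces the $X$-mark of $(Z,L')$ to also be $(X,K_\ast)$.

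Finally I would close with a sandwich argument. By Lemma \ref{lin_P_pass}, elements $(A,0,\alpha)\in G$ survive in $G_\infty$, so $\mathrm{Iso}_zG\subseteq\mathrm{Iso}_zG_\infty$ in $\mathrm{Isom}(Z)$, giving the chain $L\subseteq\mathrm{Iso}_zG\subseteq\mathrm{Iso}_zG_\infty$ and $L'\subseteq\mathrm{Iso}_zG_\infty$. Because $\psi$ is an approximated homomorphism and its image on a subgroup is defined up to conjugation in $\mathrm{Isom}(X)$, the two endpoints of this chain both have $X$-mark $(X,K_\ast)$, so the intermediate $\mathrm{Iso}_zG$ must also have $X$-mark $(X,K_\ast)$; this simultaneously establishes property \textit{(Q)} for the limit (matching $\mathrm{Iso}_zG$ with $\mathrm{Iso}_zG_\infty$) and the desired convergence of $X$-marks. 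The main obstacle is the diagonal argument, which requires carefully coordinating two convergence rates and passing through further subsequences while preserving all equivariant structure; a secondary subtlety is ensuring that the subgroup inclusions pass to $X$-marks in the presence of the conjugation ambiguity, which is where Proposition \ref{conj} and the approximate-homomorphism property of $\psi$ play a decisive role.
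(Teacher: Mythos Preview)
Your sandwich does not close from above, and that is a genuine gap. You obtain $L\subseteq\mathrm{Iso}_zG\subseteq\mathrm{Iso}_zG_\infty$ and separately $L'\subseteq\mathrm{Iso}_zG_\infty$, with $L$ and $L'$ both carrying $X$-mark $(X,K_\ast)$. But neither $L$ nor $L'$ sits \emph{above} $\mathrm{Iso}_zG_\infty$, so nothing forces $\mathrm{Iso}_zG$ or $\mathrm{Iso}_zG_\infty$ to have $X$-mark $(X,K_\ast)$: in principle $L$ and $L'$ could both be trivial while $\mathrm{Iso}_zG\subsetneq\mathrm{Iso}_zG_\infty$ are large with distinct $X$-marks. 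When you write ``the two endpoints of this chain both have $X$-mark $(X,K_\ast)$,'' the upper endpoint you need is $\mathrm{Iso}_zG_\infty$ itself, and you have not established that.

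The diagonal step is also suspect: you ask for $r_i\to\infty$ \emph{slowly enough} that $(r_i^{-1}C(Z_i),z_i,G_i)\to(C(Z),z,G_\infty)$ yet \emph{fast enough} that it is $1/i$-close to $(C(Z_i),z_i,G_{i,\infty})$. These pull in opposite directions, and there is no reason both can be met simultaneously; in general the limit of $(C(Z_i),z_i,(G_i)_\infty)$ need not be $(C(Z),z,G_\infty)$. The paper sidesteps this entirely. It passes directly to subsequential limits
\[
(Z_i,\mathrm{Iso}_{z_i}G_i)\overset{GH}\longrightarrow(Z,H),\qquad (Z_i,\mathrm{Iso}_{z_i}(G_i)_\infty)\overset{GH}\longrightarrow(Z,L),
\]
and the crucial observation you are missing is that $\mathrm{Iso}_zG_\infty\subseteq L$ (not the reverse): any $(A,0,\alpha)\in\mathrm{Iso}_zG_\infty$ arises from some $(A,v,\alpha)\in G$, which is approximated by $(A_i,v_i,\alpha_i)\in G_i$, whence $(A_i,0,\alpha_i)\in\mathrm{Iso}_{z_i}(G_i)_\infty$ converge to $(A,0,\alpha)$. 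This yields the full chain $H\subseteq\mathrm{Iso}_zG\subseteq\mathrm{Iso}_zG_\infty\subseteq L$. Since property~\textit{(Q)} gives $(Z_i,\mathrm{Iso}_{z_i}G_i)$ and $(Z_i,\mathrm{Iso}_{z_i}(G_i)_\infty)$ the same $X$-mark $(X,K_i)$, Theorem~\ref{mark_conv} forces $(Z,H)$ and $(Z,L)$ to share a common $X$-mark, and now the sandwich genuinely traps both intermediate groups. No diagonal argument is needed.
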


\begin{proof}
	We consider the following convergent sequences:
	$$(Z_i,\mathrm{Iso}_{z_i} G_i)\overset{GH}\longrightarrow (Z,H),$$
	$$(Z_i,\mathrm{Iso}_{z_i} (G_i)_\infty)\overset{GH}\longrightarrow(Z,L).$$
	It is clear that
	$$H\subseteq \mathrm{Iso}_zG\subseteq\mathrm{Iso}_zG_\infty \subseteq L.$$
	Since $(Z_i,\mathrm{Iso}_{z_i} G_i)$ and $(Z_i,\mathrm{Iso}_{z_i} (G_i)_\infty)$ have the same $X$-mark for each $i$, by Theorem \ref{mark_conv}, we conclude that $(Z,H)$ and $(Z,L)$ also share the same $X$-mark, which is also the $X$-mark of $(Z,\mathrm{Iso}_zG)$ and $(Z,\mathrm{Iso}_zG_\infty)$. 
\end{proof}

\section{Proof of equivariant stability at infinity}

With the preparations in Section 2, we prove our main goal, equivariant stability at infinity, in this section.

\begin{thm}\label{main_omega'}
	Let $X\in\mathcal{M}_{cs}(n,0)$ be a metric space and $\epsilon_X>0$ be the constant in Proposition
	\ref{to_model}. Then the following statement holds.
	
	Let $(M,x)$ be an open $n$-manifold with $\mathrm{Ric}\ge 0$. Suppose that $\pi_1(M)$ is nilpotent and $\widetilde{M}$ is $(C(X),\epsilon_X)$-stable at infinity. Then there exist an integer $l$ and a closed subgroup $K$ of $\mathrm{Isom}(X)$ such that any equivariant tangent cone of $(\widetilde{M},\pi_1(M,x))$ at infinity $(C(Z),z,G)$ satisfies that\\
	(1) property \textit{(Q)} holds,\\
	(2) $(Z,\mathrm{Iso}_z G)$ has $X$-mark $(X,K)$,\\
	(3) the orbit $G\cdot z$ is an $l$-dimensional Euclidean subspace in $C(Z)$.
\end{thm}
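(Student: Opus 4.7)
The plan is to run a critical rescaling argument on the compact connected set $\Omega := \Omega(\widetilde{M}, \pi_1(M,x))$, using the equivariant gap estimate of Theorem \ref{isom_stable} and the continuity of the $X$-mark from Theorem \ref{mark_conv} as the main technical ingredients.

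I would first establish property \textit{(Q)} for every $(C(Z), z, G) \in \Omega$. By a diagonal subsequence argument, the equivariant tangent cone at infinity of $(C(Z), z, G)$ is itself an element of $\Omega$, so Lemma \ref{lin_P_pass} provides property \textit{(P)} (and hence \textit{(Q)}) for this iterated tangent cone. Since $C(Z)$ is already a metric cone with vertex $z$, the rescalings $j^{-1} C(Z)$ are self-similar, and elements of $\mathrm{Iso}_z G$ persist verbatim under rescaling, while only translational parts in the Euclidean factor can emerge in the limit. Combining this with the injectivity of $\psi|_{O(k)}$ in Proposition \ref{to_model}(4) shows that $(Z, \mathrm{Iso}_z G)$ and $(Z, \mathrm{Iso}_z G_\infty)$ have the same $X$-mark, establishing \textit{(Q)} for $(C(Z), z, G)$ itself.

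Next I would prove that the $X$-mark $(X, K)$ of $(Z, \mathrm{Iso}_z G)$ is constant over $\Omega$. Set $\Phi(Y) = \dim(\mathrm{Iso}_z G)$ for $Y = (C(Z), z, G) \in \Omega$; by upper semicontinuity of isotropy dimension and compactness, $\Phi$ attains a maximum on $\Omega$. Theorem \ref{isom_stable} yields local constancy of the $X$-mark on the max-dimension stratum, so connectedness of $\Omega$ singles out a canonical conjugacy class $(X, K)$ there. To spread the constancy to all of $\Omega$, I carry out a critical rescaling in the style of \cite{Pan2}: if some $Y_0 \in \Omega$ had a different $X$-mark, the one-parameter family of rescalings connecting $Y_0$ to a max-dimension point would contain a first critical scale at which the $X$-mark changes, and applying Theorem \ref{isom_stable} on the higher-dimensional side of that jump forces agreement, giving a contradiction. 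Once $(X, K)$ is constant, property \textit{(Q)} together with Lemma \ref{lin_Q_prop} forces the projected action on the Euclidean factor of $C(Z)$ to satisfy property \textit{(P)}, so Proposition \ref{lin_P_decom} realizes $G \cdot z$ as a Euclidean subspace of some dimension $l(Y)$; a parallel critical rescaling argument, now tracking $l$ rather than the $X$-mark, yields constancy of $l$ on $\Omega$.

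The chief obstacle is orchestrating the critical rescaling in the middle step. In the $k$-Euclidean setting of \cite{Pan2}, one could work with group actions on the single fixed cross-section $S^{k-1}_1$ and invoke the distance gap directly; here the cross-sections $Z$ appearing in $\Omega$ vary in a whole family, and the $X$-mark machinery of Section 1 is precisely what supplies a common reference frame. The delicate point is ensuring that, at the critical scale identified by the rescaling, the dimension hypothesis $\dim(H) \ge \dim(G)$ of Theorem \ref{isom_stable} is satisfied on the correct side of the jump. Arranging this requires a substantive modification of the scheme of \cite{Pan2} and is where the new ideas flagged in the introduction must be deployed.
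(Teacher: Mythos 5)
Your high-level architecture---equivariant gap (Theorem~\ref{isom_stable}), $X$-mark continuity (Theorem~\ref{mark_conv}), critical rescaling on $\Omega(\widetilde{M},\Gamma)$---matches the paper, and you correctly identify the two-stage structure (constancy of the $X$-mark of the isotropy, then constancy of the orbit dimension~$l$). But two of the moves you make in carrying this out are incorrect, and the first is fatal to the proposed order of steps.

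The claim that property~\textit{(Q)} holds automatically for every $(C(Z),z,G)\in\Omega$ is wrong. You argue that ``elements of $\mathrm{Iso}_z G$ persist verbatim under rescaling, while only translational parts in the Euclidean factor can emerge in the limit.'' In fact the opposite phenomenon is exactly what property~\textit{(Q)} is designed to forbid: by Lemma~\ref{lin_P_pass}, for every screw motion $(A,v,\alpha)\in G$ with $v\ne 0$, the rotational part $(A,0,\alpha)$ lies in $G_\infty$, so $\mathrm{Iso}_z G_\infty$ is the closure of $\{(A,0,\alpha):(A,v,\alpha)\in G\}$ and can be strictly larger than $\mathrm{Iso}_z G$. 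The injectivity of $\psi|_{O(k)}$ from Proposition~\ref{to_model}(4) does not repair this: it says $\psi$ cannot collapse the $O(k)$-factor, not that the two isotropy groups have the same image. In the paper, property~\textit{(Q)} for all of $\Omega$ is deduced \emph{after} the key Lemma~\ref{omega_isotropy} (constancy of $X$-mark among property-\textit{(Q)} spaces), by comparing $(C(Z),z,G_z)$ with $(C(Z),z,G_\infty)$---both of which satisfy~\textit{(P)}, hence~\textit{(Q)}---and observing $\mathrm{Iso}_z G_z=\mathrm{Iso}_z G$. Your ordering puts the cart before the horse.

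The second error is the choice of extremum. You propose to fix the element of $\Omega$ with \emph{maximal} isotropy dimension and use it as the reference $(X,G)$ in Theorem~\ref{isom_stable}. But that theorem's hypothesis requires the comparison space to have $X$-mark of dimension $\ge\dim(G)$; starting from a maximum, nearby spaces would fail this inequality and the gap gives nothing. The paper instead chooses the space whose isotropy $X$-mark $K_1$ has \emph{minimal} $D(K_1)$, so that every other property-\textit{(Q)} space in $\Omega$ automatically lands on the correct side of the dimension hypothesis, and the rescaling argument (Claims~1--4 in the proof of Lemma~\ref{omega_isotropy}) is built around this minimality (including ruling out a dimension drop by passing to the tangent cone at the vertex). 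The appeal to ``connectedness of $\Omega$ singles out a canonical conjugacy class on the max-dimension stratum'' is also not justified: that stratum need not be connected, and the paper never uses a stratification argument---constancy comes entirely from the critical rescaling together with the gap lemma.
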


We compare Theorem \ref{main_omega'} with the result below, which regards the case that $\widetilde{M}$ has unique tangent cone at infinity as a metric cone (see Remark 4.15 in \cite{Pan2}).

\begin{thm}\label{main_omega_unique}
	Let $(M,x)$ be an open $n$-manifold with $\mathrm{Ric}\ge 0$. Suppose that $\pi_1(M)$ is nilpotent and $\widetilde{M}$ has unique tangent cone at infinity $(C(Z),z)$ as a metric cone with vertex $z$. Then there exist an integer $l$ and a closed subgroup $K$ of $\mathrm{Isom}(X)$ such that for any equivariant tangent cone of $(\widetilde{M},\pi_1(M,x))$ at infinity $(C(Z),z,G)$, property \textit{(P)} holds with $G=K\times \mathbb{R}^l$ (see Lemma \ref{lin_P_decom}).
\end{thm}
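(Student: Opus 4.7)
The plan is to study the compact connected set $\Omega := \Omega(\widetilde{M}, \pi_1(M,x))$ of equivariant tangent cones at infinity. Every $p = (C(Z), z, G) \in \Omega$ has cross-section with $d_{GH}(Z, X) \le \epsilon_X$, so the $X$-mark of $(Z, \mathrm{Iso}_z G)$ is defined via Proposition~\ref{to_model}. The strategy is to establish (1), (2), (3) simultaneously by a critical rescaling argument on $\widetilde{M}$, analogous to Pan2's proof of Theorem~\ref{omega_k_eu} but upgraded via the $X$-mark machinery of Section~1.

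The starting point is Lemma~\ref{lin_P_pass}: for any $p \in \Omega$, the rescaled limit $p_\infty := (C(Z), z, G_\infty)$ lies in $\Omega$ by a diagonal argument on $\widetilde{M}$ and satisfies property~(P), hence property~(Q) via Proposition~\ref{lin_P_decom}. By Lemma~\ref{lin_Q_conv} the set $\Omega_Q := \{p \in \Omega : p \text{ has property (Q)}\}$ is closed and contains every such $p_\infty$. To prove (1) I aim to show $\Omega_Q = \Omega$; simultaneously I would show that $\Phi(p) := (X, \psi(\mathrm{Iso}_z G))$ is a single conjugacy class $(X,K)$ for $p \in \Omega_Q$ (giving (2)) and that the orbit dimension $l(p) := \dim(G \cdot z)$ is a single integer $l$ on $\Omega$ (giving (3)).

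The core is a critical rescaling argument. Suppose toward contradiction that one of the three statements fails: either $\Omega_Q \ne \Omega$, or $\Phi$ assumes two non-equivalent values on $\Omega_Q$, or $l$ takes two distinct values on $\Omega$. Pick tangent cones $p_1, p_2 \in \Omega$ realizing the discrepancy via scale sequences $r_i^{(j)} \to \infty$ ($j=1,2$), and consider the continuous family $r \mapsto (r^{-1}\widetilde{M}, \tilde{x}, \pi_1(M,x))$ for $r \in [r_i^{(1)}, r_i^{(2)}]$, whose equivariant structure transitions from near $p_1$ to near $p_2$. A first-time-reaching argument with threshold smaller than the gap constant $\eta$ from Theorem~\ref{isom_stable} selects critical intermediate scales $s_i$, and a subsequential limit produces $p^* \in \Omega$ sitting on the boundary between the two regimes. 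By construction $p^*$ is $\eta$-close to two tangent cones with comparable isotropy dimensions but inequivalent $X$-marks (or with different orbit dimensions), contradicting Theorem~\ref{isom_stable} combined with Lemma~\ref{lin_Q_conv}.

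Once (1) and (2) are in hand, (3) falls out: by (1) and Lemma~\ref{lin_Q_prop}, the projection $p_Z(G)$ to the Euclidean factor satisfies property~(P); Proposition~\ref{lin_P_decom} then identifies $G \cdot z$ with an $l$-dimensional translational Euclidean subspace, and the common value of $l$ is ensured by the same critical rescaling. The main obstacle is engineering this critical step: Pan2 could work on a fixed $\mathbb{R}^k$ and invoke a distance gap on $S^{k-1}$ directly, but here different tangent cones carry different maximal Euclidean factors, and even within a single tangent cone the rescaled group $G_\infty$ can have strictly larger isotropy at $z$ than $G$, which is why the $X$-mark framework and the revised gap Theorem~\ref{isom_stable} were developed. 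Arranging $s_i$ so that the critical $p^*$ lands in the closed set $\Omega_Q$ where $X$-marks behave continuously under Lemma~\ref{lin_Q_conv}, and where the dimension hypothesis of Theorem~\ref{isom_stable} can be enforced, is the technical heart of the proof.
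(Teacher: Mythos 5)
The paper does not actually prove Theorem~\ref{main_omega_unique} here; it is cited from Remark~4.15 of \cite{Pan2} as a point of comparison with Theorem~\ref{main_omega'}. Your proposal in fact sketches the proof of Theorem~\ref{main_omega'}, the $(C(X),\epsilon_X)$-stable case, complete with its three labeled conclusions, rather than the unique tangent cone statement. That route could in principle recover Theorem~\ref{main_omega_unique} via the $X$-mark machinery, which would be a genuinely different path from the direct argument in \cite{Pan2}, but only after an observation you never make: when $\widetilde{M}$ has a unique tangent cone $(C(Z),z)$ at infinity, every equivariant tangent cone has the \emph{same} underlying cone $C(Z)$, so one may take the model cross-section $X$ to be $Z$ itself, and then the map $\psi$ of Proposition~\ref{to_model} is injective (essentially the identity up to conjugation). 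The paper itself flags this immediately after the statement: the difference between property~(Q) and property~(P) is exactly the possible non-injectivity of $\psi$.

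Concretely, your final paragraph invokes Lemma~\ref{lin_Q_prop} and concludes only that the projection $p_Z(G)$ satisfies property~(P) and that $G\cdot z$ is a Euclidean subspace, which is the weaker conclusion of Theorem~\ref{main_omega'}(3), not what Theorem~\ref{main_omega_unique} claims. With $\psi$ injective, Lemma~\ref{lin_Q_prop} upgrades to property~(P) for the full group $G$: for $(A,v,\alpha)\in G$ you get $(A,0,\beta)\in G$ with $\psi(\alpha\beta^{-1})=e$, hence $\alpha=\beta$. From there Proposition~\ref{lin_P_decom} gives $G=\mathrm{Iso}_z G\times\mathbb{R}^l\times\mathbb{Z}^m$, the Euclidean orbit forces $m=0$, and the constancy of the $X$-mark along $\Omega(\widetilde{M},\Gamma)$ (your item~(2)) becomes, via injectivity, constancy of the isotropy conjugacy class $K=\mathrm{Iso}_zG$ in $\mathrm{Isom}(Z)$. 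Without invoking injectivity of $\psi$, you have not derived property~(P) for $G$ or the decomposition $G=K\times\mathbb{R}^l$, so the argument as written does not reach the theorem's conclusion.
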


We have seen in Section 2 that property \textit{(Q)} is weaker than \textit{(P)} (see Proposition \ref{lin_P_decom} and Lemma \ref{lin_Q_prop}). Therefore, compared with Theorem \ref{main_omega_unique}, the conclusion in Theorem \ref{main_omega'} allows more flexibility. More technically speaking, this comes from the fact that the map $\psi$ in Proposition \ref{to_model} may not be injective. If one assume that $\widetilde{M}$ has Euclidean volume growth and is $(C(X),\epsilon_X)$-stable at infinity, then $X\in\mathcal{M}_{cs}(n,0,v)$ for some $v>0$; in this case, one can apply Theorem 0.8 in \cite{PR} to show that $\psi$ in Proposition \ref{to_model} is injective, which will improve Theorem \ref{main_omega'}(1) from property \textit{(Q)} to \textit{(P)}.

We first follow the idea in Section 1 to establish an equivariant Gromov-Hausdorff gap for metric cones with property \textit{(Q)}.

\begin{lem}\label{gap_isotropy}
	Let $K$ be an isometric action on $X\in\mathcal{M}_{cs}(n,0)$. Then there exists $\eta(X,K)>0$, such that the following holds.	
	
	For any two metric cones $(C(Z_j),z_j,G_j)$ with the conditions below\\
	(1) $d_{GH}(X,Z_j)\le \epsilon_X$, the constant in Proposition \ref{to_model} ($j=1,2$),\\
	(2) $(C(Z_j),z_j,G_j)$ satisfies property \textit{(Q)} ($j=1,2$),\\
	(3) $(Z_1,\mathrm{Iso}_{z_1} G_1)$ has $X$-mark $(X,K)$,\\
	(4) $(Z_2,\mathrm{Iso}_{z_2} G_2)$ has $X$-mark $(X,H)$ with $\dim(H)\ge\dim (K)$ and $(X,H)$ not equivalent to $(X,K)$,\\
	then
	$$d_{GH}((C(Z_1),z_1,G_1),(C(Z_2),z_2,G_2))\ge\eta.$$
\end{lem}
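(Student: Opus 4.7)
The plan is to prove Lemma \ref{gap_isotropy} by contradiction, reducing it to the already-established gap result on the single cross-section $X$, namely Theorem \ref{isom_stable_single}. The key conduit between the cone-level data and the cross-section-level data is the notion of the $X$-mark of the isotropy action, whose stability under equivariant limits is exactly the content of Lemma \ref{lin_Q_conv}.

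Suppose the conclusion fails. Then I obtain sequences $(C(Z_{i,j}),z_{i,j},G_{i,j})$, $j=1,2$, satisfying hypotheses (1)--(4) of the lemma, such that
\[
d_{GH}\bigl((C(Z_{i,1}),z_{i,1},G_{i,1}),(C(Z_{i,2}),z_{i,2},G_{i,2})\bigr)\to 0.
\]
After passing to a subsequence, both sides converge in the pointed equivariant Gromov-Hausdorff sense to a common limit $(C(Z),z,G)$, where $Z$ again lies in $\mathcal{M}_{cs}(n,0)$ with $d_{GH}(Z,X)\le\epsilon_X$ (this uses only compactness of $\mathcal{M}_{cs}(n,0)$ restricted to an $\epsilon_X$-ball around $X$, plus the fact that the base points are vertices of cones so convergence extends to the full cones).

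Next I invoke Lemma \ref{lin_Q_conv}. Because each $(C(Z_{i,1}),z_{i,1},G_{i,1})$ satisfies property \textit{(Q)}, so does $(C(Z),z,G)$; furthermore, since every $(Z_{i,1},\mathrm{Iso}_{z_{i,1}}G_{i,1})$ has $X$-mark $(X,K)$, the second conclusion of Lemma \ref{lin_Q_conv} forces the $X$-mark of $(Z,\mathrm{Iso}_z G)$ to be $(X,K)$ as well. Applying the same lemma to the second sequence, the $X$-marks $(X,H_i)$ of $(Z_{i,2},\mathrm{Iso}_{z_{i,2}}G_{i,2})$ Gromov-Hausdorff converge to the $X$-mark of the same limit $(Z,\mathrm{Iso}_z G)$, that is,
\[
(X,H_i)\overset{GH}\longrightarrow (X,K).
\]
Together with $\dim(H_i)\ge\dim(K)$ from hypothesis (4), Theorem \ref{isom_stable_single} applies for all sufficiently large $i$ and yields that $(X,H_i)$ is equivalent to $(X,K)$, contradicting the non-equivalence required in (4).

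The only genuinely delicate point is verifying that convergence of the full cone actions really does pass to convergence of the cross-section isotropy actions in a manner compatible with the $X$-mark, and this is what Lemma \ref{lin_Q_conv} is designed to supply, so the argument reduces cleanly to Theorem \ref{isom_stable_single}. The dimension hypothesis $\dim(H)\ge\dim(K)$ is essential at exactly one place, in invoking Theorem \ref{isom_stable_single}, mirroring the role it played in the proof of Theorem \ref{isom_stable}; without it, nothing prevents a lower-dimensional action from GH-approaching a higher-dimensional one.
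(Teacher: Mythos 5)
Your proof is correct and follows the same route as the paper's: argue by contradiction, pass both sequences to a common limit, apply Lemma \ref{lin_Q_conv} twice to propagate property \textit{(Q)} and the $X$-marks to the limit, and then conclude via Theorem \ref{isom_stable_single} using the dimension hypothesis. The only difference is cosmetic, namely your explicit parenthetical justification that the limit cross-section still lies in $\mathcal{M}_{cs}(n,0)$ within $\epsilon_X$ of $X$, which the paper leaves implicit.
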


\begin{proof}
	Suppose that there are two sequences of metric cones: $$\{(C(Z_{ij}),z_{ij},G_{ij})\}_i\ \ (j=1,2)$$ such that for all $i$,\\
	(1) $d_{GH}(X,Z_{ij})\le \epsilon_X$;\\
	(2) $(C(Z_{ij}),z_{ij},G_{ij})$ satisfies property \textit{(Q)};\\
	(3) $(Z_{i1},\mathrm{Iso}_{z_{i1}} G_{i1})$ has $X$-mark $(X,K)$;\\
	(4) $(Z_{i2},\mathrm{Iso}_{z_{i2}} G_{i2})$ has $X$-mark $(X,H_i)$, with $\dim(H_i)\ge\dim (K)$ and $(X,H_i)$ not equivalent to $(X,K)$;\\
	(5) $d_{GH}((C(Z_{i1}),z_{i1},G_{i1}),(C(Z_{i2}),z_{i2},G_{i2}))\to 0$ as $i\to\infty$.\\
	After passing to some subsequences, this gives convergence
	$$(C(Z_{i1}),z_{i1},G_{i1})\overset{GH}\longrightarrow(C(Z),z,G),$$
	$$(C(Z_{i2}),z_{i2},G_{i2})\overset{GH}\longrightarrow(C(Z),z,G).$$
	Since each $(C(Z_{ij}),z_{ij},G_{ij})$ satisfies property \textit{(Q)}, by Lemma \ref{lin_Q_conv}, the limit space $(C(Z),z,G)$ also satisfies property \textit{(Q)}. Moreover, $(Z,\mathrm{Iso}_z G)$ has $X$-mark $(X,K)$ due to condition (3). Applying Lemma \ref{lin_Q_conv} again, we also see that
	$$(X,H_i)\overset{GH}\longrightarrow (X,K)$$
	with $\dim(H_i)\ge \dim(K)$ and $(X,H_i)$ not equivalent to $(X,K)$, which contradicts Theorem \ref{isom_stable_single}.
\end{proof}

We prove a key lemma to Theorem \ref{main_omega'} using the critical rescaling argument.

\begin{lem}\label{omega_isotropy}
	Let $M$ be an open $n$-manifold with the assumptions in Theorem \ref{main_omega'}. Then for any two spaces $(C(Z_j),z_j,G_j)\in (\widetilde{M},\Gamma)$ with property \textit{(Q)} $(j=1,2)$,\\ $(Z_1,\mathrm{Iso}_{z_1} G_1)$ and $(Z_2,\mathrm{Iso}_{z_2} G_2)$ have the same $X$-mark.
\end{lem}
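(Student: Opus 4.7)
I argue by contradiction via the critical rescaling scheme of Sections 2 and 4 of \cite{Pan2}, modified to accommodate property \textit{(Q)}. Suppose two tangent cones $(C(Z_j),z_j,G_j)\in\Omega(\widetilde{M},\Gamma)$, $j=1,2$, both satisfy \textit{(Q)} and have inequivalent isotropy $X$-marks $(X,K_j)$. Relabel so that $\dim K_2\ge\dim K_1$; Lemma \ref{gap_isotropy} supplies $\eta=\eta(X,K_1)>0$ with $d_{GH}((C(Z_1),z_1,G_1),(C(Z_2),z_2,G_2))\ge\eta$. Fix scales $r_i<s_i\to\infty$ realizing the two cones as equivariant GH limits, and a small $\epsilon_0\in(0,\eta/10)$. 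By continuity of $t\mapsto(t^{-1}\widetilde{M},\tilde{x},\Gamma)$ in eq. GH distance on bounded balls, select a critical scale $c_i\in[r_i,s_i]$ with $d_{GH}((c_i^{-1}\widetilde{M},\tilde{x},\Gamma),(C(Z_1),z_1,G_1))=\epsilon_0$ and with this distance bounded by $\epsilon_0$ on $[r_i,c_i]$. After passing to a subsequence,
$$(c_i^{-1}\widetilde{M},\tilde{x},\Gamma)\overset{GH}\longrightarrow(C(W),w,G^*)\in\Omega(\widetilde{M},\Gamma).$$

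The central step is to extract two auxiliary tangent cones at the critical scale by diagonal rescalings and use them to pin down the isotropy $X$-mark of $(C(W),w,G^*)$. After passing to a further subsequence where $c_i/r_i\to\infty$ (the alternative $s_i/c_i\to\infty$ is dual), pick $\lambda_i\to 0$ with $\lambda_ic_i\in[r_i,c_i]$ and $\lambda_ic_i\to\infty$. Then $((\lambda_ic_i)^{-1}\widetilde{M},\tilde{x},\Gamma)$ converges to the vertex-zoom-in of $(C(W),w,G^*)$, namely $(C(W),w,\mathrm{Iso}_w G^*)\in\Omega(\widetilde{M},\Gamma)$, since zooming in at a cone vertex asymptotically kills non-isotropy elements. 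As a pure-isotropy action this cone satisfies \textit{(Q)} trivially, and lies within eq. GH distance $\epsilon_0$ of $(C(Z_1),z_1,G_1)$; Lemma \ref{gap_isotropy} then forces its isotropy $X$-mark to be $(X,K_1)$. Dually, with $\mu_i\to\infty$, the sequence $((\mu_ic_i)^{-1}\widetilde{M},\tilde{x},\Gamma)$ converges to the iterated tangent cone $(C(W),w,(G^*)_\infty)\in\Omega(\widetilde{M},\Gamma)$, which satisfies \textit{(P)} by Lemma \ref{lin_P_pass}, hence \textit{(Q)}. Combining the two auxiliary cones with the inclusion $\mathrm{Iso}_w G^*\subseteq\mathrm{Iso}_w(G^*)_\infty$, Proposition \ref{to_model}(2), Theorem \ref{mark_conv}, and Lemma \ref{lin_Q_conv} promotes property \textit{(Q)} to $(C(W),w,G^*)$ itself with the same isotropy $X$-mark $(X,K_1)$. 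Now iterate: treat $(C(W),w,G^*)$ as a new base, locate a successive critical scale $c_i^{(1)}\in[c_i,s_i]$ where the eq. GH distance to $(C(W),w,G^*)$ first reaches $\epsilon_0$, and repeat to obtain a limit in $\Omega_{(Q)}$ with isotropy $X$-mark $(X,K_1)$. By triangle inequality each iteration moves the base cone by at most $\epsilon_0$, so after at most $\lceil 2\eta/\epsilon_0\rceil$ steps the chain terminates within eq. GH distance $\epsilon_0$ of $(C(Z_2),z_2,G_2)$; Lemma \ref{gap_isotropy} then forces $(X,K_1)\sim(X,K_2)$, contradicting the initial assumption.

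The principal obstacle is the transfer of $X$-mark information from the two auxiliary cones, where property \textit{(Q)} is automatic, back to the critical limit $(C(W),w,G^*)$ itself. Since \textit{(Q)} is strictly weaker than \textit{(P)}---different $G^*$-actions can share the same $X$-mark via the non-injective $\psi$---this transfer requires tracking $\mathrm{Iso}_w G^*$ rather than $G^*$ and crucially exploiting the structural decomposition of Proposition \ref{lin_P_decom} applied to the automatically-\textit{(P)} iterated cone $(G^*)_\infty$. This is exactly the "largely modified" critical rescaling step flagged in the introduction, and where the heaviest bookkeeping of the proof lies.
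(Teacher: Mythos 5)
Your proposal follows the paper's overall critical-rescaling strategy, but it has several genuine gaps and one missing idea that the paper's proof depends on in an essential way.

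The missing idea is the selection of $(C(Z_1),z_1,G_1)$ so that the $X$-mark $K_1$ of its isotropy has the \emph{minimal} $D(K_1)=(\dim K_1,\#K_1/(K_1)_0)$ among all cones in $\Omega(\widetilde{M},\Gamma)$ with property \textit{(Q)}. You instead relabel so that $\dim K_2\ge\dim K_1$, which is not the same thing and does not constrain the auxiliary cones you produce later. The consequence shows up immediately: you want to apply Lemma \ref{gap_isotropy} to the vertex zoom-in $(C(W),w,\mathrm{Iso}_w G^*)$ and $(C(Z_1),z_1,G_1)$ to conclude their isotropy $X$-marks agree, but Lemma \ref{gap_isotropy} only yields a gap when the \emph{other} cone's isotropy has dimension $\ge\dim K_1$. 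If $\dim$ of the $X$-mark of $\mathrm{Iso}_wG^*$ were strictly smaller than $\dim K_1$, the lemma (with the fixed constant $\eta(X,K_1)$) gives nothing, and there is no a priori reason to exclude this. In the paper this is precisely what the minimality of $D(K_1)$ rules out, via passing to the vertex tangent cone of the critical limit (which satisfies \textit{(P)} and has the same isotropy) and invoking minimality; your proof never closes this loop.

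The second gap is the ``promotion'' of property \textit{(Q)} and of the $X$-mark from the two auxiliary cones back to $(C(W),w,G^*)$. You cite the inclusion $\mathrm{Iso}_wG^*\subseteq\mathrm{Iso}_w(G^*)_\infty$ together with Proposition \ref{to_model}(2), Theorem \ref{mark_conv}, and Lemma \ref{lin_Q_conv}, but none of these results lets you deduce equality of $X$-marks of these two isotropies (which is exactly property \textit{(Q)} for $G^*$) from the two auxiliary cones alone. The paper does not promote \textit{(Q)} this way at all: it shows that \emph{failure} of \textit{(Q)} at the critical limit forces $D(K'_\infty)>D(K_1)$ for the iterated cone at infinity, which by the definition of the scale set $L_i$ would force a strictly smaller scale $l_i/J$ into $L_i$, contradicting the choice of $l_i$ near $\inf L_i$. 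That is a different mechanism, and it is the step where the definition of $L_i$ in terms of $D(\cdot)$ and inequivalence to $(X,K_1)$ does real work.

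Two further problems. First, the claim that $((\lambda_i c_i)^{-1}\widetilde{M},\tilde{x},\Gamma)$ converges to the vertex zoom-in of $(C(W),w,G^*)$ for any $\lambda_i\to 0$ is false without controlling the rate of $\lambda_i$ against the rate of convergence of $c_i^{-1}\widetilde{M}\to C(W)$; a slow enough $\lambda_i$ can be chosen by a diagonal argument, but then $\lambda_i c_i$ need not remain in $[r_i,c_i]$ nor stay within $\epsilon_0$ of $(C(Z_1),z_1,G_1)$. The paper avoids this entirely by working with the scale set $L_i$ and $l_i\approx\inf L_i$, where the required contradictions come from membership in $L_i$ rather than from a delicate rate-matching argument. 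Second, the concluding iteration (``after at most $\lceil 2\eta/\epsilon_0\rceil$ steps the chain terminates near $(C(Z_2),z_2,G_2)$'') is unjustified: there is no lower bound on the progress made per step toward $(C(Z_2),z_2,G_2)$, so finiteness of the chain does not follow; the paper obtains the final contradiction in one stroke, from $l_i\in L_i$ plus Lemma \ref{gap_isotropy} applied to the critical limit, with no iteration at all.
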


\begin{defn}
	For a compact Lie group $K$, we define $$D(K)=(\dim K,\#K/K_0).$$ For two compact Lie groups $K$ and $H$, with $D(K)=(l_1,l_2)$ and $D(H)=(m_1,m_2)$, we say that $D(K)<D(H)$, if $l_1<m_1$, or if $l_1=m_1$ and $l_2<m_2$. We say that $D(K)\le D(H)$, if $D(K)=D(H)$ or $D(K)<D(H)$.
\end{defn}

\begin{proof}[Proof of Lemma \ref{omega_isotropy}]
	We argue by contradiction. Suppose that there are two spaces $$(C(Z_j),z_j,G_j)\in \Omega(\widetilde{M},\Gamma)\ \ (j=1,2)$$ with property \textit{(Q)} but
	$(Z_1,\mathrm{Iso}_{z_1} G_1)$ and $(Z_2,\mathrm{Iso}_{z_2} G_2)$ having different $X$-marks. We also choose $(C(Z_1),z_1,G_1)$ so that its isotropy subgroup at $z_1$, $\mathrm{Iso}_{z_1}G_1$, has the minimal $D(K_1)$ among all spaces in $\Omega(\widetilde{M},\Gamma)$ with property \textit{(Q)}. We will derive a contradiction by using the critical rescaling argument and Lemma \ref{gap_isotropy}.
	
	Let $(X,K_j)$ be the $X$-mark of $(Z_j,\mathrm{Iso}_{z_j}G_j)$.
	Let $r_i\to\infty$ and $s_i\to\infty$ be two sequences such that
	$$(r^{-1}_i\widetilde{M},\tilde{x},\Gamma)\overset{GH}\longrightarrow(C(Z_1),z_1,G_1),$$
	$$(s^{-1}_i\widetilde{M},\tilde{x},\Gamma)\overset{GH}\longrightarrow(C(Z_2),z_2,G_2),$$
	Passing to a subsequence if necessary, we assume that $t_i:=(s^{-1}_i)/(r^{-1}_i)\to\infty$. Put $$(N_i,q_i,\Gamma_i)=(r^{-1}_i\widetilde{M},\tilde{x},\Gamma),$$
	then we have
	$$(N_i,q_i,\Gamma_i)\overset{GH}\longrightarrow(C(Z_1),z_1,G_1),$$
	$$(t_iN_i,q_i,\Gamma_i)\overset{GH}\longrightarrow(C(Z_2),z_2,G_2).$$
	By assumptions, $(C(Z_j),z_j,G_j)$ satisfies property \textit{(Q)} $(j=1,2)$; also, $D(K_1)=(m_1,m_2)\le D(K_2)$, and $(X,K_1)$ is not equivalent to $(X,K_2)$.
	
	We choose $\eta=\eta(X,K_1)>0$ as follows: by Lemma \ref{gap_isotropy}, there is $\eta>0$ such that for any $(C(Y_j),y_j,H_j)\in \Omega(\widetilde{M},\Gamma)$ $(j=1,2)$ satisfying\\
	(1) $(C(Y_j),y_i,H_j)$ satisfies property \textit{(Q)},\\
	(2) $(Y_1,\mathrm{Iso}_{z_1}H_1)$ has $X$-mark $(X,K_1)$,\\
	(3) $(Y_2,\mathrm{Iso}_{z_2}H_2)$ has $X$-mark $(X,K)$ with $\dim(K)\ge\dim(K_1)$,\\
	(4) $d_{GH}((C(Y_1),y_1,H_1),(C(Y_2),y_2,H_2))\le\eta$,\\then $(X,K)$ is equivalent to $(X,K_1)$.
	
	For each $i$, we define a set of scales
	\begin{align*}
		L_i:=\{\  1\le l\le t_i\ |\ & d_{GH}((l{N}_i,q_i,\Gamma_i),(C(Y),y,H))\le \eta/3 \\
		& \text{for some space $(C(Y),y,H)\in \Omega(\widetilde{M},\Gamma)$}\\
		& \text{such that $(C(Y),y,H)$ satisfies property \textit{(Q)};}\\
		& \text{moreover, } D(K)\ge(m_1,m_2) \text{ but } (X,K)\\
		& \text{is not equivalent to } (X,K_1), \text{ where}\\
		& (X,K) \text { is the $X$-mark of } (Y,\mathrm{Iso}_yH)\}.
	\end{align*}

    Since $t_i\in L_i$ for all $i$ large, $L_i$ is non-empty. We choose $l_i\in L_i$ so that $\inf L_i\le l_i \le \inf L_i+1/i$.
    
    \textbf{Claim 1:} $l_i\to\infty$. Suppose that $l_i\to B<\infty$ for some subsequence, then for this subsequence,
    $$(l_iN_i,q_i,\Gamma_i)\overset{GH}\longrightarrow(B\cdot C(Z_1),z_1,G_1).$$
    By the fact that $l_i\in L_i$ and the above convergence, we know that there is some space $(C(Y),y,H)\in\Omega(\widetilde{M},\Gamma)$ with the properties below:\\
    (1) $(C(Y),y,H)$ satisfies property \textit{(Q)},\\
    (2) $D(K)\ge(m_1,m_2)$ but $(X,K)$ is not equivalent to $(X,K_1)$, where $(X,K)$ is the $X$-mark of $(Y,\mathrm{Iso}_y(H))$,\\
    (3) $d_{GH}((B\cdot C(Z_1),z_1,G_1),(C(Y),y,H))\le\eta/2$.\\
    Since $(C(Z_1),z_1,G_1)$ satisfies property \textit{(Q)}, so does $(B\cdot C(Z_1),z_1,G_1)$. By Lemma \ref{gap_isotropy} and the choice of $\eta$, we derive a contradiction to the condition (2) above. We have verified Claim 1.
    
    Passing to a subsequence if necessary, we consider the convergence
    $$(l_iN_i,q_i,\Gamma_i)\overset{GH}\longrightarrow(C(Z'),z',G').$$
    We draw a contradiction by ruling out all the possibilities of $G'$-action. Let $(X,K')$ be the $X$-mark of $(Z',\mathrm{Iso}_{z'}G')$.
    
    \textbf{Claim 2:} $D(K')\ge (m_1,m_2)$. In fact, if $D(K')<(m_1,m_2)$, we pass to the equivariant tangent cone of $(C(Z'),z',G')$ at $z'$:
    $$(jC(Z'),z',G')\overset{GH}\longrightarrow (C(Z'),z',G'_{z'}).$$
    The limit space $(C(Z'),z',G'_{z'})$ satisfies property \textit{(P)}, thus property \textit{(Q)}. Since $\mathrm{Iso}_{z'}G'_{z'}=\mathrm{Iso}_{z'}G'$, $(Z',\mathrm{Iso}_{z'}G'_{z'})$ has $X$-mark $(X,K')$ with $D(K')<(m_1,m_2)$. We know that this can not happen since we chose $(C(Z_1),z_1,G_1)$ so that $K_1$, the $X$-mark of $\mathrm{Iso}_{z_1} G_1$, has the minimal $D(K_1)$ among all spaces in $\Omega(\widetilde{M},\Gamma)$ with property \textit{(Q)}.
    
    \textbf{Claim 3:} $(C(Z'),z',G')$ satisfies property \textit{(Q)} and $D(K')=(m_1,m_2)$. We already know that $D(K')\ge (m_1,m_2)$ from Claim 2. Suppose that Claim 3 fails, then after we passing to the equivariant tangent cone of $(C(Z'),z',G')$ at infinity:
    $$(j^{-1}C(Z'),z',G')\overset{GH}\longrightarrow(C(Z'),z',G'_\infty),$$
    the limit space $(C(Z'),z',G'_\infty)$ must satisfies $D(K'_\infty)>(m_1,m_2)$, where $(X,K'_\infty)$ is the $X$-mark of $(Z',\mathrm{Iso}_{z'}G'_\infty)$. Also,
    $(C(Z'),z',G'_\infty)$ satisfies property \textit{(Q)} by Lemma \ref{lin_P_pass}. We choose a large integer $J$ such that
    $$d_{GH}((J^{-1}C(Z'),z',G'),(C(Z'),z',G'_\infty))\le\eta/4.$$
    Hence for all $i$ large, we have
    $$d_{GH}((J^{-1}l_iN_i,q_i,\Gamma_i),(C(Z'),z',G'_\infty))\le\eta/3.$$
    This implies that $l_i/J\in L_i$ for all $i$ large, a contradiction to our choice of $l_i$ with $\inf L_i\le l_i \le \inf L_i+1/i$..
    
    \textbf{Claim 4:} $(X,K')$ is equivalent to $(X,K_1)$. Suppose not, then we consider the sequence $l_i/2$:
    $$(l_i/2\cdot N_i,q_i,\Gamma_i)\overset{GH}\longrightarrow(1/2 \cdot C(Z'),z',G').$$
    Claim 3 tells us that $(C(Z'),z',G')$ satisfies property \textit{(Q)}; it follows that after a change of scale $(1/2\cdot C(Z'),z',G')$ also satisfies property \textit{(Q)}. This means that $l_i/2\in L_i$ for $i$ large, a contradiction to the choice of $L_i$.
    
    We sum up the results from Claims 1 to 3: for the convergence
    $$(l_iN_i,q_i,\Gamma_i)\overset{GH}\longrightarrow(C(Z'),z',G'),$$
    the limit space $(C(Z'),z',G')$ satisfies property \textit{(Q)}; moreover, $(X,K')$, the $X$-mark of $(Z',\mathrm{Iso}_{z'}G')$, is equivalent to $(X,K_1)$. Together with Lemma \ref{gap_isotropy}, these lead to the ultimate contradiction: Because $l_i\in L_i$, there is some space $(C(Y),y,H)\in\Omega(\widetilde{M},\Gamma)$ satisfying the conditions (1)(2) in the proof of Claim 1, and
    $$d_{GH}((C(Z'),z',G'),(C(Y),y,H))\le\eta/2,$$
    which contradicts Lemma \ref{gap_isotropy}.
\end{proof}

With Lemma \ref{omega_isotropy}, we are ready to prove Theorem \ref{main_omega'}(1,2).

\begin{proof}[Proof of Theorem \ref{main_omega'}(1,2)]
	We show that any space $(C(Z),z,G)\in \Omega(\widetilde{M},\Gamma)$ satisfies property \textit{(Q)}, then by Lemma \ref{omega_isotropy}, the $X$-mark of $(Z,\mathrm{Iso}_z G)$ would be independent of $(C(Z),z,G)$.
	
	Suppose that property \textit{(Q)} fails for some $(C(Z),z,G)\in \Omega(\widetilde{M},\Gamma)$. We consider its equivariant tangent cone at $z$ and at infinity respectively:
	$$(jC(Z),z,G)\overset{GH}\longrightarrow(C(Z),z,G_z),$$
	$$(j^{-1}C(Z),z,G)\overset{GH}\longrightarrow(C(Z),z,G_\infty).$$
	By Lemma \ref{lin_P_pass}, the above two limit spaces satisfy property \textit{(P)}, thus property \textit{(Q)}. We also know that the $X$-marks of $(Z,\mathrm{Iso}_z G)$ and $(Z,\mathrm{Iso}_z G_\infty)$ are not equivalent since property \textit{(Q)} fails on $(C(Z),z,G)$. Together with the fact that $\mathrm{Iso}_z G_z=\mathrm{Iso}_z G$, we now have two spaces in $\Omega(\widetilde{M},\Gamma)$: $(C(Z),z,G_z)$ and $(C(Z),z,G_\infty)$ with property \textit{(Q)} but the $X$-marks of $(Z,\mathrm{Iso}_z G_z)$ and $(Z,\mathrm{Iso}_z G_\infty)$ being not equivalent. This is a contradiction to Lemma \ref{omega_isotropy}.
\end{proof}

\begin{rem}\label{remains}
	We have shown that any space $(C(Z),z,G)\in \Omega(\widetilde{M},\Gamma)$ satisfies property \textit{(Q)}. Together with Lemmas \ref{lin_Q_prop} and \ref{lin_P_decom}, it is clear that the orbit $G\cdot z$ is an $(\mathbb{R}^l\times \mathbb{Z}^m)$-translation orbit. To prove Theorem \ref{main_omega'}(3), it remains to show that $m=0$ and $l$ are the same among all spaces in $\Omega(\widetilde{M},\Gamma)$. Note that $\Omega(\widetilde{M},\Gamma)$ always contains spaces with $\mathbb{R}^l$-translation orbit (without $\mathbb{Z}^m$-factor): this can be done by passing to the tangent cone of any $(C(Z),z,G)\in\Omega(\widetilde{M},\Gamma)$ at $z$. With these observations, it remains to rule out the case that $(\widetilde{M},\Gamma)$ have two spaces $(C(Z_j),z_j,G_j))$ $(j=1,2)$ satisfying\\
    (1) the orbit $G_1\cdot z_1$ is an $l$-dimensional Euclidean subspace,\\
    (2) the orbit $G_2\cdot z_2$ contains an $(l+1)$-dimensional Euclidean subspace, or $G_2\cdot z_2$ contains an $l$-dimensional Euclidean subspace $E$ and an extra orbit point $q$ with $d(E,q)>0$.\\
    Scaling $(C(Z_2),z_2,G_2)$ down by a constant if necessary, we can replace (2) by:\\
    (2') the orbit $G_2\cdot z_2$ contains an $l$-dimensional Euclidean subspace $E$ and an extra orbit point $q$ with $d(E,q)\in (0,1]$.
\end{rem}

\begin{lem}\label{gap_trans}
	Given $X\in\mathcal{M}_{cs}(n,0)$, there exists $\eta(X)>0$ such that the following holds.
	
	For any two metric cones $(C(Z_j),z_j,G_j))$ $(j=1,2)$ satisfying\\
	(1) $d_{GH}(Z_j,X)\le\epsilon_X$,\\
	(2) property \textit{(Q)} holds,\\
	(3) the orbit $G_1\cdot z_1$ is an $l$-dimensional Euclidean subspace,\\
	(4) the orbit $G_2\cdot z_2$ contains an $l$-dimensional Euclidean subspace $E$ and an extra orbit point $q$ with $d(E,q)\in (0,1]$.\\
	Then
	$$d_{GH}((C(Z_1),z_1,G_1),(C(Z_2),z_2,G_2))\ge\eta.$$
\end{lem}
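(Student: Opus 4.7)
The plan is to argue by contradiction. Suppose no such $\eta > 0$ exists; then I obtain sequences $\{(C(Z_{i,j}), z_{i,j}, G_{i,j})\}_i$ $(j=1,2)$ satisfying (1)-(4) with equivariant Gromov-Hausdorff distance going to $0$. Passing to a common subsequence, both sequences converge to the same equivariant limit $(C(Z), z, G)$. By Lemma \ref{lin_Q_conv} the limit satisfies property \textit{(Q)}, and by Lemmas \ref{lin_Q_prop} and \ref{lin_P_decom} the orbit $G\cdot z$ sits inside the Euclidean factor of $C(Z)$ and takes the form $\mathbb{R}^{l'}+\mathbb{Z}^{m'}$ for some $l',m'\ge 0$. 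On the other hand, $G\cdot z$ is the Hausdorff limit of the orbits $G_{i,1}\cdot z_{i,1}$, each an $l$-dimensional Euclidean subspace through $z_{i,1}$, so the limit must itself be an $l$-dimensional Euclidean subspace, forcing $l'=l$ and $m'=0$.

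The second sequence provides the competing information. After passing to a subsequence stabilizing $l'_{i,2}$ and $m'_{i,2}$, condition (4) leaves two cases: either (A) $l'_{i,2}\ge l+1$, or (B) $l'_{i,2}=l$ and $m'_{i,2}\ge 1$. In Case (A), the continuous translation subgroup $\mathbb{R}^{l'_{i,2}}$ of $p_Z(G_{i,2})$ persists under Hausdorff limits of closed subgroups, so $p_Z(G)$ carries a continuous translation subgroup of dimension $\ge l+1$; the limit orbit is strictly larger than $\mathbb{R}^l$, a contradiction.

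Case (B) is the main obstacle. I pick a non-trivial lattice element in $p_Z(G_{i,2})$ whose orbit point realizes the extra point required by (4); it is a pure translation by a vector $u_i = u_i^{\parallel} + u_i^{\perp}$ with $u_i^{\parallel}\in\mathbb{R}^l$, $u_i^{\perp}\perp \mathbb{R}^l$, and $|u_i^{\perp}|=d_i\in(0,1]$. Lift to $g_i=(I,u_i,\alpha_i)\in G_{i,2}$ via the surjection $p_Z$. If $\liminf d_i>0$, the sequence of extra points converges to a point of $G\cdot z$ at positive distance from $\mathbb{R}^l$, contradicting $G\cdot z=\mathbb{R}^l$. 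So I may assume $d_i\to 0$.

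The key trick is then to consider the composite $h_i g_i^{N_i}$ with $N_i=\lfloor 1/d_i\rfloor$, where $h_i\in G_{i,2}$ is lifted from an element of the continuous $\mathbb{R}^l$-subgroup of $p_Z(G_{i,2})$ translating by $-N_iu_i^{\parallel}$. The projection $p_Z(h_i g_i^{N_i})$ is a pure translation by $N_i u_i^{\perp}$, of norm $\approx 1$ orthogonal to $\mathbb{R}^l$, so the displacement of $h_i g_i^{N_i}$ at $z_{i,2}$ is bounded even though those of $g_i^{N_i}$ and $h_i$ separately diverge. Passing to a further subsequence gives a limit element $g_\infty\in G$ whose orbit point $g_\infty\cdot z$ is at distance $1$ from $z$ in a direction orthogonal to $\mathbb{R}^l$, hence outside $\mathbb{R}^l = G\cdot z$, the desired contradiction. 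The delicate points will be verifying that the combined displacement is indeed bounded and that the limiting translation vector is truly orthogonal to $\mathbb{R}^l$; both rest on the translational decomposition of $p_Z(G_{i,2})$ afforded by Lemmas \ref{lin_Q_prop} and \ref{lin_P_decom}.
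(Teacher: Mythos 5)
Your proposal is correct and takes the same overall strategy as the paper: assume the gap fails, extract two sequences converging to a common equivariant limit $(C(Z),z,G)$, use property~\textit{(Q)} together with Lemma~\ref{lin_Q_prop} and Proposition~\ref{lin_P_decom} to recognise $G\cdot z$ as a translation orbit $\mathbb{R}^{l'}\times\mathbb{Z}^{m'}$, observe from the first sequence that the limit orbit is an $l$-dimensional Euclidean subspace, and then show the second sequence forces extra structure on that same limit orbit. The paper's proof simply asserts that ``the orbit $G_{i2}\cdot z_{i2}$ has an extra $\mathbb{Z}$-orbit with generator having displacement $\le 1$'' and that ``passing this property to the limit'' gives a clear contradiction. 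Your version fills this out: you split into the two genuinely different cases (extra continuous direction vs.\ extra lattice direction), and in the lattice case you deal explicitly with the degeneracy $d_i\to 0$ by combining $g_i^{N_i}$ (with $N_i=\lfloor 1/d_i\rfloor$) with a compensating element $h_i$ from the $\mathbb{R}^l$-translation subgroup, producing a bona fide group element of displacement $\approx 1$ orthogonal to the limit $\mathbb{R}^l$. This is the same underlying phenomenon the paper has in mind (as $d_i\to 0$, the $\mathbb{Z}$-orbit limits onto a whole extra line), but your element-level argument is cleaner and removes any ambiguity about what ``passing to the limit'' means. One small wording note: your $u_i$ should be a lattice element realising the perpendicular distance of the extra point, not the extra point itself (the extra point need not itself be a lattice translate of $z_{i,2}$), but the argument you run is unaffected since only $u_i^{\perp}\in T_{i,2}$ and $|u_i^\perp|=d_i$ matter.
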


\begin{proof}
	Suppose that the contrary holds, then we have two sequences of metric cones $\{(C(Z_{ij}),z_{ij},G_{ij})\}_i$ $(j=1,2)$ such that\\
	(1) $d_{GH}(Z_{ij},X)\le\epsilon_X$,\\
	(2) property \textit{(Q)} holds for all $i$,\\
	(3) the orbit $G_{i1}\cdot z_{i1}$ is an $l$-dimensional Euclidean subspace,\\
	(4) the orbit $G_{i2}\cdot z_{i2}$ contains an $l$-dimensional Euclidean subspace $E_i$ and an extra point $q_i$ with $d(E_i,q_i)\in (0,1]$,\\
	(5) $d_{GH}((C(Z_{i1}),z_{i1},G_{i1}),(C(Z_{i2}),z_{i2},G_{i2}))\to 0$ as $i\to\infty$.
	
	Passing to a subsequence if necessary, these two sequences converge to the same limit:
	$$(C(Z_{i1}),z_{i1},G_{i1})\overset{GH}\longrightarrow (C(Z),z,G),$$
	$$(C(Z_{i2}),z_{i2},G_{i2})\overset{GH}\longrightarrow (C(Z),z,G).$$
	Since the orbit $G_{ii}\cdot z_{i1}$ converges to the orbit $G\cdot z$ in the limit space and each $G_{i1}\cdot z_{i1}$ is an $l$-dimensional Euclidean subspace, the limit orbit $G\cdot z$ must be an $l$-dimensional Euclidean subspace as well. On the other hand, the orbit $G_{i2}\cdot z_{i2}$ has an extra $\mathbb{Z}$-orbit with generator having displacement $\le 1$. Passing this property to the limit, we see a clear contradiction.
\end{proof}

With Lemma \ref{gap_trans}, we use the critical rescaling argument one more time to finish the proof of Theorem \ref{main_omega'}.

\begin{proof}[Proof of Theorem \ref{main_omega'}(3)]
	As pointed out in Remark \ref{remains}, it suffices to rule out the case that $\Omega(\widetilde{M},\Gamma)$ have two metric cones: 
	\begin{center}
		$(C(Z_1),z_1,G_1)$ and $(C(Z_2),z_2,G_2)$
	\end{center} 
    with the conditions below:\\
	(1) the orbit $G_1\cdot z_1$ is an $l$-dimensional Euclidean subspace,\\
	(2) the orbit $G_2\cdot z_2$ contains an $l$-dimensional Euclidean subspace $E$ and an extra orbit point $q$ with $d(E,q)\in (0,1]$.\\
	We also choose $(C(Z_1),z_1,G_1)$ so that its orbit $G_1\cdot z_1$ has the smallest dimension among all spaces in $\Omega(\widetilde{M},\Gamma)$.
	
	Let $r_i\to\infty$ and $s_i\to\infty$ be two sequences such that
	$$(r^{-1}_i\widetilde{M},\tilde{x},\Gamma)\overset{GH}\longrightarrow(C(Z_1),z_1,G_1),$$
	$$(s^{-1}_i\widetilde{M},\tilde{x},\Gamma)\overset{GH}\longrightarrow(C(Z_2),z_2,G_2).$$
	Passing to a subsequence, we assume that $t_i:=(s^{-1}_i)/(r^{-1}_i)\to\infty$. We put $$(N_i,q_i,\Gamma_i)=(r^{-1}_i\widetilde{M},\tilde{x},\Gamma),$$
	then
	$$(N_i,q_i,\Gamma_i)\overset{GH}\longrightarrow(C(Z_1),z_1,G_1),$$
	$$(t_iN_i,q_i,\Gamma_i)\overset{GH}\longrightarrow(C(Z_2),z_2,G_2).$$
	
	Let $\eta>0$ be the constant in Lemma \ref{gap_trans}. For each $i$, we define a set of scales
	\begin{align*}
		L_i:=\{\  1\le l\le t_i\ |\ & d_{GH}((l{N}_i,q_i,\Gamma_i),({W},{w},H))\le \eta/3 \\
		& \text{ for some space $(C(Y),y,H)\in \Omega(\widetilde{M},\Gamma)$ such that}\\
		& \text{ the orbit $H\cdot y$ contains an $l$-dimensional Euclidean }\\
		& \text{ subspace $E$ and an extra point with $d(E,q)\in (0,1]$. }\}
	\end{align*}
	We know that $t_i\in L_i$ for $i$ large. We choose $l_i\in L_i$ of $\inf L_i\le l_i \le \inf L_i+1/i$.
	
	\textbf{Claim 1:} $l_i\to\infty$. If $l_i\to B<\infty$, then
	$$(l_iN_i,q_i,\Gamma_i)\overset{GH}\longrightarrow(B\cdot C(Z_1),z_1,G_1).$$
	The space $(B\cdot C(Z_1),z_1,G_1)$ satisfies property \textit{(Q)}, and the orbit $G_1\cdot z_1$ is an $l$-dimensional Euclidean subspace. Since $l_i\in L_i$, by the definition of $L_i$ and the convergence, we have
	$$d_{GH}((B\cdot C(Z_1),z_1,G_1),(C(Y),y,H))\le\eta/2$$
	for some $(C(Y),y,H)\in \Omega(\widetilde{M},\Gamma)$ with the prescribed conditions. This is a contradiction to Lemma \ref{gap_trans}.
	
	Next we consider convergence
	$$(l_iN_i,q_i,\Gamma_i)\overset{GH}\longrightarrow(C(Z'),z',G').$$
	
	\textbf{Claim 2:} The orbit $G'\cdot z'$ is an $l$-dimensional Euclidean subspace in $C(Z')$. Indeed, because $(C(Z'),z',G')$ satisfies property \textit{(Q)}, we know that the orbit $G'\cdot z'$ is a translation $(\mathbb{R}^{l'}\times \mathbb{Z}^{m'})$-orbit. If $l'<l$, then we pass to the equivariant tangent cone of $(C(Z'),z',G')$ at $z'$. In such a limit space, the orbit is an $l'$-dimensional Euclidean subspace with $l'<l$, which contradicts with our choice of $(C(Z_1),z_1,G_1)$. If $l'>l$, or $l'=l$ but $m'\not= 0$, then the orbit $G'\cdot z'$ contains an $l$-dimensional Euclidean subspace $E'$ and an extra orbit point $q'$. Let $d>0$ be the distance between $E'$ and $q'$. If $d\le 1$, then $l_i/2\in L_i$. If $d>1$, then $l_i/(2d)\in L_i$. In either case, we see a contradiction to our choice of $l_i$. Hence Claim 2 holds.
	
	We derive the desired contradiction: $l_i\in L_i$ so by the definition of $L_i$ and the convergence, $$d_{GH}((C(Z'),z',G'),(C(Y),y,H))\le\eta/2$$
	for some space $(C(Y),y,H)\in \Omega(\widetilde{M},\Gamma)$ with the prescribed conditions, a contradiction to Lemma \ref{gap_trans}. 
\end{proof}

\section{Virtually abelian structure}

The goal of this section is the theorem below, which implies that $\pi_1(M)$ is virtually abelian if $\widetilde{M}$ is $(C(X),\epsilon_X)$-stable at infinity or $k$-Euclidean at infinity.

\begin{thm}\label{main_abel}
	Let $(M,x)$ be an open $n$-manifold with $\mathrm{Ric}\ge 0$ and a finitely generated nilpotent fundamental group $\Gamma=\pi_1(M,x)$. Suppose that\\
	(1) any tangent cone of $\widetilde{M}$ at infinity $(Y,y)$ is a metric cone with vertex $y$,\\
	(2) for any equivariant tangent cone of $(\widetilde{M},\Gamma)$ at infinity
	$$(C(Z),z,G)=(\mathbb{R}^k\times C(Z'),(0,z'),G),$$
	where $\mathrm{diam}(Z')<\pi$,
	the projection of $G$-action to the maximal Euclidean factor $(\mathbb{R}^k,0,p_Z(G))$ satisfies property \textit{(P)}.\\
	Then the $Z(\Gamma)$, the center of $\Gamma$, has finite index in $\Gamma$.
\end{thm}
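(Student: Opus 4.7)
The goal is to prove $[\Gamma:Z(\Gamma)]<\infty$. Since $\Gamma$ is finitely generated nilpotent, so is the quotient $\Gamma/Z(\Gamma)$, and it suffices to show that this quotient is a torsion group: by Mal'cev's theorem a finitely generated nilpotent torsion group is finite. Fixing a finite generating set $\{\beta_1,\dots,\beta_s\}$ of $\Gamma$, it then suffices to show that for every $\gamma\in\Gamma$ and every $j$ there exists $N=N(\gamma,\beta_j)$ with $[\gamma^{N},\beta_j]=e$; a common multiple of these $N$'s then places $\gamma^{N}$ in $Z(\Gamma)$.

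The main geometric input, which I would isolate as Lemma~\ref{almost_trans}, is that elements of $\Gamma$ with large displacement at $\tilde x$ act almost as Euclidean translations: if $\gamma_i\in\Gamma$ satisfies $|\gamma_i|:=d(\gamma_i\tilde x,\tilde x)\to\infty$, then $d(\gamma_i^{2}\tilde x,\tilde x)/|\gamma_i|\to 2$. The proof passes to an equivariant tangent cone at scale $|\gamma_i|$, producing a limit $(C(Z),z,G,g)$ with $d(gz,z)=1$; hypothesis~(2) of Theorem~\ref{main_abel} forces $p_Z(g)=(A,v)\cdot(B,0)$ with $(B,0)\in p_Z(G)$ (property~\textit{(P)}), so after absorbing the isotropy part the base-point displacement is captured by the pure translation $v$ since $A$ fixes $0$, yielding $d(g^{2}z,z)=2|v|=2$.

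Next I would prove the key quantitative estimate: for each $\beta\in\Gamma$, the sequence $\{[\gamma^{n},\beta]\}_{n\ge 1}$ has uniformly bounded displacement at $\tilde x$. By proper discontinuity of $\Gamma\acts\widetilde M$, this forces $\{[\gamma^{n},\beta]\}_{n}$ to take finitely many values in $\Gamma$. The heuristic is that at the scale $|\gamma^{n}|$, the element $\gamma^{n}$ becomes a pure translation while $\beta$ contracts to the identity, so the commutator collapses in the rescaled limit and $|[\gamma^{n},\beta]|=o(|\gamma^{n}|)$. To upgrade this to a uniform bound, I would argue by contradiction through a critical rescaling scheme analogous to that of Section~3: if $s_n:=|[\gamma^{n},\beta]|\to\infty$ along a subsequence, take the equivariant tangent cone at scale $s_n$ and use property~\textit{(P)} in this new limit to extract a contradiction from the survival of the commutator as a nontrivial element while its two factors, viewed at this new scale, behave like commuting translations on the Euclidean projection.

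Once $\{[\gamma^{n},\beta]\}_{n}$ is known to be a finite subset of $\Gamma$, the standard polynomial commutator expansion in a nilpotent group — modulo $C_{2}(\Gamma)$ one has $[\gamma^{n},\beta]\equiv [\gamma,\beta]^{n}$, with an analogous polynomial expansion at each stage of the lower central series — forces the leading coefficient to be torsion at each stage. An induction on the nilpotency class then produces $N$ with $[\gamma^{N},\beta]=e$. The main obstacle I foresee is the upgrade from the soft $o(|\gamma^{n}|)$ estimate to the uniform boundedness of $|[\gamma^{n},\beta]|$: this is precisely where property~\textit{(P)} must be exploited at every scale rather than just asymptotically, in order to rule out the commutator generating a new translation direction at its own scale that is invisible at the scale of $\gamma$.
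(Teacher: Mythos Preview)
Your proposal correctly isolates the geometric core (Lemma~\ref{almost_trans}) and aims at the right target, but the central step --- upgrading $|[\gamma^{n},\beta]|=o(|\gamma^{n}|)$ to a uniform bound via a critical rescaling at scale $s_n=|[\gamma^{n},\beta]|$ --- does not go through as sketched. At that scale the factor $\gamma^{n}$ has displacement $|\gamma^{n}|/s_n\to\infty$ and hence does \emph{not} converge to an element of the limit group $G$, while $\beta$ converges to the identity. So the phrase ``its two factors, viewed at this new scale, behave like commuting translations'' has no content: one factor has escaped, the other is trivial. All you see in the limit is the commutator itself, some $h\in G$ with $d(hz,z)=1$; property~\textit{(P)} says nothing against the existence of such an $h$, and no contradiction emerges. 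This is a genuine obstruction, not a detail to be filled in.

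The paper circumvents this by an induction down the lower central series rather than attacking $[\gamma^{n},\beta]$ directly. Assuming $C_{k+1}(\Gamma)$ is already known to be finite, Lemma~\ref{id_nil} gives
\[
[\alpha^{l},\beta^{l}]=[\alpha,\beta]^{(l^{2})}\cdot h,\qquad h\in C_{k+1}(\Gamma),
\]
for $\alpha\in\Gamma$, $\beta\in C_{k-1}(\Gamma)$. The left side has displacement at most $2l(|\alpha|+|\beta|)$, linear in $l$; on the right, if $[\alpha,\beta]$ had infinite order one could first choose $l$ so that $|[\alpha,\beta]^{(l^{2})}|\ge R_0$, and then Corollary~\ref{almost_trans_cor} forces $|[\alpha,\beta]^{(l^{2}\cdot 4^{m})}|\ge 1.9^{2m}R_0$, outpacing the linear bound $2^{m+1}l(|\alpha|+|\beta|)$ along $l\cdot 2^{m}$. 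The finiteness of $C_{k+1}(\Gamma)$ is exactly what makes the $h$-term harmless --- it contributes a bounded error --- and this is the structural role of the induction that your direct approach lacks. Starting from $C_{\text{nil}+1}(\Gamma)=\{e\}$ and iterating yields $[\Gamma,\Gamma]$ finite, and then $[\Gamma:Z(\Gamma)]<\infty$ by elementary group theory.

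In short: the missing idea is to compare the $l^{2}$-power of a single commutator against the linear growth of $[\alpha^{l},\beta^{l}]$, with the higher commutator tail controlled inductively. Your scheme tries to see both $\gamma^{n}$ and $\beta$ at the scale of their commutator, but no single scale accomplishes this.
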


\begin{cor}\label{cor_v_abel}
	Let $M$ be an open $n$-manifold of $\mathrm{Ric}\ge 0$. Suppose that one of the following conditions is true:\\
	(1) $\widetilde{M}$ is $(C(X),\epsilon_X)$-stable at infinity for some $X\in \mathcal{M}_{cs}(n,0)$ and the corresponding constant $\epsilon_X>0$ as in Proposition \ref{to_model}; or\\
	(2) $\widetilde{M}$ is $k$-Euclidean at infinity.\\
	Then $\pi_1(M)$ contains a normal abelian subgroup of finite index.
\end{cor}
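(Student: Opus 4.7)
The plan is to reduce to Theorem \ref{main_abel} applied to a nilpotent normal subgroup of finite index. First, $\Gamma := \pi_1(M)$ is finitely generated: in case (1) this is Theorem A(1), and in case (2) this is the main finite generation result of \cite{Pan2}. By the Kapovitch-Wilking finiteness theorem \cite{KW}, $\Gamma$ contains a nilpotent subgroup of index at most $C(n)$; passing to its normal core gives a nilpotent subgroup $N$ that is normal and of finite index in $\Gamma$. Let $M' \to M$ be the finite Riemannian cover with $\pi_1(M', x') = N$; then $\widetilde{M'} = \widetilde{M}$, so the stability hypotheses on $\widetilde{M}$ transfer verbatim to $M'$.

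Next I would verify the two hypotheses of Theorem \ref{main_abel} for the pair $(M', N)$. Hypothesis (1), that every tangent cone of $\widetilde{M}$ at infinity is a metric cone, is immediate from the definition of $(C(X),\epsilon_X)$-stable in case (1), and from the $k$-Euclidean condition in case (2). For hypothesis (2), concerning property \textit{(P)} for the projection $p_Z(G)$, I proceed by case. In case (1), since $N$ is nilpotent, Theorem \ref{main_omega'} applied to $(M', N)$ shows that every equivariant tangent cone $(C(Z), z, G)$ of $(\widetilde{M}, N)$ at infinity satisfies property \textit{(Q)}; Lemma \ref{lin_Q_prop} then translates this into property \textit{(P)} for the projected action $(\mathbb{R}^k, 0, p_Z(G))$ on the maximal Euclidean factor. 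In case (2), Theorem \ref{omega_k_eu}, extended to nilpotent fundamental groups by the same critical rescaling arguments as in \cite{Pan2}, yields $p(G) = K \times \mathbb{R}^l$ with $K$ fixing the origin and $\{e\} \times \mathbb{R}^l$ acting by translations, which is precisely property \textit{(P)} in the sense of Definition \ref{def_linear_P}.

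With both hypotheses verified, Theorem \ref{main_abel} yields that $Z(N)$ has finite index in $N$, and therefore in $\Gamma$. Since $Z(N)$ is characteristic in $N$ and $N$ is normal in $\Gamma$, $Z(N)$ is normal in $\Gamma$; it is abelian by definition. This exhibits the required normal abelian subgroup of finite index in $\pi_1(M)$.

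The main obstacle is verifying hypothesis (2) in case (2): Theorem \ref{omega_k_eu} as stated is for abelian fundamental groups, so one must check that the arguments in \cite{Pan2} extend to a nilpotent $N$, for instance by inducting on the nilpotency class of $N$ or by exploiting Wilking's reduction \cite{Wi}. In case (1) the conceptual subtlety is that Theorem \ref{main_omega'} naturally provides only the weaker property \textit{(Q)} rather than \textit{(P)}, but Lemma \ref{lin_Q_prop} shows these notions agree after projection to the Euclidean factor, which is exactly what Theorem \ref{main_abel} consumes.
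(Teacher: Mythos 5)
Your proposal is essentially the same as the paper's proof: reduce to a normal nilpotent subgroup $N$ of finite index, apply Theorem \ref{main_omega'} plus Lemma \ref{lin_Q_prop} (case 1) or the analogous result from \cite{Pan2} (case 2) to verify hypothesis (2) of Theorem \ref{main_abel}, and conclude that $Z(N)$ is a normal abelian subgroup of finite index. The only cosmetic differences are that you invoke Kapovitch--Wilking plus a normal-core argument to produce $N$, whereas the paper cites Milnor--Gromov, and you make the passage to the finite cover $M'$ explicit where the paper leaves it implicit; both choices are fine. The ``obstacle'' you flag in case (2) is already resolved in the cited reference: as the present paper indicates when stating Theorem \ref{main_omega_unique} (via Remark 4.15 in \cite{Pan2}), the critical-rescaling argument there is already carried out for nilpotent fundamental groups, so no additional extension is required.
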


\begin{proof}
	Under the condition (1) or (2), we know that $\Gamma=\pi_1(M,x)$ is finitely generated. By the work of Milnor and Gromov \cite{Mi,Gro2}, $\Gamma$ contains a normal nilpotent subgroup $N$ of finite index. For this subgroup $N$, we apply Theorem \ref{main_omega'} and Lemma \ref{lin_Q_prop} for condition (1), or \cite{Pan2} for condition (2). It follows that under either condition, for any equivariant tangent cone of $(\widetilde{M},N)$ at infinity $(C(Z),z,G)$, assumption (2) in Theorem \ref{main_abel} is fulfilled. Thus by Theorem \ref{main_abel}, $Z(N)$ has finite index in $N$. Consequently, $Z(N)$ is a normal abelian subgroup of $\Gamma$ with finite index. 
\end{proof}

We start with some commutator computations.

\begin{lem}\label{id_comm}
	Let $\alpha,\beta$ be two elements in a group $G$. Then for any integer $l\ge 2$,\\
	(1) $[\alpha^l,\beta]=\left(\prod_{j=1}^{l-1}[\alpha,[\alpha^{l-j},\beta]]\right)\cdot [\alpha,\beta]^l$,\\
	(2) $[\alpha,\beta^l]=[\alpha,\beta]^l\cdot \left(\prod_{j=1}^{l-1}[[\beta^{j},\alpha],\beta]\right)$.
\end{lem}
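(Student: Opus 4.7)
The proof is a purely group-theoretic calculation that proceeds by induction on $l$. The tools needed are elementary: the commutator expansion $[xy,z] = x[y,z]x^{-1}\cdot[x,z]$, which peels off a single factor from the first slot of a commutator, and the conjugation identity $xyx^{-1} = [x,y]\cdot y$, which converts conjugation by a single element into a commutator factor.

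For (1), the base case $l=2$ is a one-line check:
$$[\alpha^2,\beta] \;=\; \alpha[\alpha,\beta]\alpha^{-1}\cdot[\alpha,\beta] \;=\; [\alpha,[\alpha,\beta]]\cdot[\alpha,\beta]^2,$$
which matches the formula. For the inductive step, I would write
$$[\alpha^l,\beta] \;=\; [\alpha\cdot\alpha^{l-1},\beta] \;=\; [\alpha,[\alpha^{l-1},\beta]]\cdot[\alpha^{l-1},\beta]\cdot[\alpha,\beta],$$
substitute the induction hypothesis for $[\alpha^{l-1},\beta]$, and re-index. The new leading factor $[\alpha,[\alpha^{l-1},\beta]]$, together with $\prod_{j=1}^{l-2}[\alpha,[\alpha^{l-1-j},\beta]]$ from the hypothesis, combines into $\prod_{j=1}^{l-1}[\alpha,[\alpha^{l-j},\beta]]$, and the trailing $[\alpha,\beta]^{l-1}\cdot[\alpha,\beta]$ becomes $[\alpha,\beta]^l$.

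For (2), rather than running a parallel induction using $[x,yz]=[x,y]\cdot y[x,z]y^{-1}$, the cleanest route is to deduce (2) from (1) by inversion. I apply (1) with the roles of $\alpha$ and $\beta$ exchanged to get a formula for $[\beta^l,\alpha]$, and then use $[\alpha,\beta^l] = [\beta^l,\alpha]^{-1}$ together with $(AB)^{-1} = B^{-1}A^{-1}$. The trailing $[\beta,\alpha]^l$ inverts to $[\alpha,\beta]^l$ and moves to the front position, while inverting the product $\prod_{j=1}^{l-1}[\beta,[\beta^{l-j},\alpha]]$ reverses the order of factors and turns each one into $[[\beta^{l-j},\alpha],\beta]$ via $[u,v]^{-1} = [v,u]$. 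Re-indexing by $j'=l-j$ converts the reversed product into $\prod_{j'=1}^{l-1}[[\beta^{j'},\alpha],\beta]$, which is precisely (2).

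The main obstacle is nothing more than careful bookkeeping of index ranges and the order in which non-commuting factors appear; no conceptual difficulty is expected. The identities are preparatory in nature, to be used in the subsequent displacement estimates for elements of the nilpotent fundamental group.
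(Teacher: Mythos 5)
Your proposal is correct and follows essentially the same route as the paper: you expand $[\alpha^l,\beta]$ by repeatedly applying $[xy,z]=x[y,z]x^{-1}[x,z]=[x,[y,z]][y,z][x,z]$ (you phrase this as a formal induction on $l$, the paper unwinds it directly, but these are the same argument), and you then obtain (2) from (1) by inverting with the roles of $\alpha$ and $\beta$ swapped and re-indexing, exactly as the paper does.
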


\begin{proof}
	It is straight-forward to verify that
	$$[\alpha\beta,\gamma]=[\alpha,[\beta,\gamma]]\cdot[\beta,\gamma]\cdot[\alpha,\gamma].$$
	In fact,
	\begin{align*}
		\text{RHS}=&\alpha [\beta,\gamma] \alpha^{-1} [\beta,\gamma]^{-1}\cdot [\beta,\gamma] \cdot [\alpha,\gamma]\\
		=&\alpha [\beta,\gamma] \alpha^{-1}\cdot \alpha\gamma\alpha^{-1}\gamma^{-1}\\
		=&\alpha\beta\gamma\beta^{-1}\gamma^{-1}\cdot \gamma\alpha^{-1}\gamma^{-1}\\
		=&(\alpha\beta)\gamma (\alpha\beta)^{-1}\gamma^{-1}=\text{LHS}.
	\end{align*}
	
	We use this identity repeatedly to prove (1):
	\begin{align*}
		[\alpha^l,\beta]=&[\alpha\cdot \alpha^{l-1},\beta]\\
		=&[\alpha,[\alpha^{l-1},\beta]]\cdot[\alpha^{l-1},\beta]\cdot [\alpha,\beta]\\
		=&[\alpha,[\alpha^{l-1},\beta]]\cdot[\alpha,[\alpha^{l-2},\beta]]\cdot[\alpha^{l-2},\beta]\cdot [\alpha,\beta]^2\\
		=&\left(\prod_{j=1}^{l-1}[\alpha,[\alpha^{l-j},\beta]]\right)\cdot [\alpha,\beta]^l.
	\end{align*}
	
	(2) follows from (1):
	\begin{align*}
		[\alpha,\beta^l]=&[\beta^{l},\alpha]^{-1}\\
		=&\left(\left(\prod_{j=1}^{l-1}[\beta,[\beta^{l-j},\alpha]]\right)\cdot [\beta,\alpha]^l\right)^{-1}\\
		=& [\beta,\alpha]^{-l}\cdot \left(\prod_{j=1}^{l-1}[\beta,[\beta^{l-j},\alpha]]\right)^{-1}\\
		=& [\alpha,\beta]^l \cdot \prod_{j=1}^{l-1}[\beta,[\beta^{j},\alpha]]^{-1}\\
		=& [\alpha,\beta]^l \cdot \prod_{j=1}^{l-1}[[\beta^{j},\alpha],\beta].
	\end{align*}
\end{proof}

For the remaining of this section, we always assume that $\Gamma=\pi_1(M,x)$ is nilpotent with the assumptions in Theorem \ref{main_abel}.

\begin{lem}\label{id_nil}
	Let $\alpha\in \Gamma$ and $\beta\in C_{k-1}(\Gamma)$. Then for any integer $k$
	$$[\alpha^l,\beta^l]=[\alpha,\beta]^{(l^2)}\cdot h$$
	for some element $h\in C_{k+1}(\Gamma)$.
\end{lem}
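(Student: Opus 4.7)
The plan is a direct modular computation using Lemma \ref{id_comm} twice, combined with the observation that $C_k(\Gamma)/C_{k+1}(\Gamma)$ is central in $\Gamma/C_{k+1}(\Gamma)$.

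First, I would apply Lemma \ref{id_comm}(1) with the roles $\alpha \mapsto \alpha$, $\beta \mapsto \beta^l$ to expand
\[
[\alpha^l, \beta^l] = \left(\prod_{j=1}^{l-1}[\alpha,[\alpha^{l-j},\beta^l]]\right) \cdot [\alpha, \beta^l]^l.
\]
Since $\beta \in C_{k-1}(\Gamma)$ and $C_{k-1}(\Gamma)$ is a subgroup, we have $\beta^l \in C_{k-1}(\Gamma)$, hence $[\alpha^{l-j}, \beta^l] \in [\Gamma, C_{k-1}(\Gamma)] = C_k(\Gamma)$, and consequently $[\alpha, [\alpha^{l-j}, \beta^l]] \in [\Gamma, C_k(\Gamma)] = C_{k+1}(\Gamma)$. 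Each factor in the product therefore lies in $C_{k+1}(\Gamma)$, and since $C_{k+1}(\Gamma)$ is a normal subgroup of $\Gamma$, we may conclude
\[
[\alpha^l, \beta^l] \equiv [\alpha, \beta^l]^l \pmod{C_{k+1}(\Gamma)}.
\]

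Next I would apply Lemma \ref{id_comm}(2) to rewrite
\[
[\alpha, \beta^l] = [\alpha, \beta]^l \cdot \prod_{j=1}^{l-1}[[\beta^j, \alpha], \beta].
\]
Again $\beta^j \in C_{k-1}(\Gamma)$ gives $[\beta^j, \alpha] \in C_k(\Gamma)$, and then $[[\beta^j, \alpha], \beta] \in [C_k(\Gamma), \Gamma] = C_{k+1}(\Gamma)$. Thus
\[
[\alpha, \beta^l] \equiv [\alpha, \beta]^l \pmod{C_{k+1}(\Gamma)}.
\]

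The only remaining point, and the one requiring a little care, is promoting this congruence to its $l$-th power. Write $[\alpha, \beta^l] = [\alpha,\beta]^l \cdot g$ with $g \in C_{k+1}(\Gamma)$. Because $[\alpha,\beta] \in C_k(\Gamma)$ and $[C_k(\Gamma), \Gamma] \subseteq C_{k+1}(\Gamma)$, the image of $[\alpha,\beta]$ in $\Gamma/C_{k+1}(\Gamma)$ is central, so modulo $C_{k+1}(\Gamma)$ the factor $[\alpha,\beta]^l$ commutes with $g$, and the $l$-th power expands as
\[
[\alpha,\beta^l]^l \equiv [\alpha,\beta]^{l^2} \pmod{C_{k+1}(\Gamma)}.
\]
Combining the two congruences yields $[\alpha^l, \beta^l] \equiv [\alpha,\beta]^{l^2} \pmod{C_{k+1}(\Gamma)}$, which is the desired identity with $h \in C_{k+1}(\Gamma)$.

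The bookkeeping of which nested commutator lands in which term of the lower central series is the only thing that could go wrong; the key observation that makes every error term vanish is simply that any commutator with one entry from $C_{k-1}(\Gamma)$ and one from $\Gamma$ lies in $C_k(\Gamma)$, and commuting such a term further with $\Gamma$ pushes it into $C_{k+1}(\Gamma)$. Once this is organized, the rest is formal.
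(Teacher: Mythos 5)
Your proof is correct and follows essentially the same route as the paper's: both expand $[\alpha^l,\beta^l]$ via the two parts of Lemma \ref{id_comm}, observe that every nested commutator produced falls into $C_{k+1}(\Gamma)$ because one entry is already in $C_{k-1}(\Gamma)$ or $C_k(\Gamma)$, and then absorb error terms using normality of $C_{k+1}(\Gamma)$. The only cosmetic difference is the order in which the two identities are applied (you use (1) on $[\alpha^l,\beta^l]$ with $\beta\mapsto\beta^l$ and then (2) on the inner commutator, while the paper uses (2) then (1)), and your justification of the power-raising step appeals to centrality of $[\alpha,\beta]$ modulo $C_{k+1}(\Gamma)$ where normality alone already suffices; both are fine.
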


\begin{proof}
	First note that for any $\alpha\in \Gamma$ and $\beta\in C_{k-1}(\Gamma)$, the terms $$\prod_{j=1}^{l-1}[\alpha,[\alpha^{l-j},\beta]], \quad \prod_{j=1}^{l-1}[[\beta^{j},\alpha],\beta]$$
	in Lemma \ref{id_comm} are elements in $C_{k+1}(\Gamma)$. Also note that $C_{k+1}(\Gamma)$ is normal in $\Gamma$, thus for any $\gamma\in \Gamma$ and any $h\in C_{k+1}(\Gamma)$, we have
	$h\cdot \gamma=\gamma\cdot h'$ for some $h'\in C_{k+1}(\Gamma)$.
	
	With these observations and Lemma \ref{id_comm}, we calculate $[\alpha^l,\beta^l]$ as follows:
	\begin{align*}
		[\alpha^l,\beta^l]&=[\alpha^l,\beta]^l \cdot \left(\prod_{j=1}^{l-1}[[\beta^{j},\alpha^l],\beta]\right)\\
		&=\left(\left(\prod_{j=1}^{l-1}[\alpha,[\alpha^{l-j},\beta]]\right)\cdot [\alpha,\beta]^l \right)^l\cdot h\\
		&=(h'\cdot [\alpha,\beta]^l)^l\cdot h\\
		&=[\alpha,\beta]^{(l^2)}\cdot h'',
	\end{align*}
	where $h,h',h''$ are elements in $C_{k+1}(\Gamma)$.
\end{proof}

For simplicity, we write $|\gamma|=d(\gamma\tilde{x},\tilde{x})$ for any element $\gamma\in \Gamma$. We show that if $|\gamma|$ is sufficiently large, then $\gamma$ behaves almost as a translation at $\tilde{x}$.

\begin{lem}\label{almost_trans}
	There exists $R_0>0$ such that for any $\gamma\in\Gamma$, if $|\gamma|\ge R_0$, then
	$$|\gamma^2|\ge 1.9\cdot|\gamma|.$$
\end{lem}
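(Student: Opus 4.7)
I would argue by contradiction. Assume that there is a sequence $\gamma_i\in\Gamma$ with $r_i:=|\gamma_i|\to\infty$ and $|\gamma_i^2|<1.9\,r_i$ for every $i$. Rescaling the universal cover by $r_i^{-1}$ gives $d(\gamma_i\tilde{x},\tilde{x})=1$ and $d(\gamma_i^2\tilde{x},\tilde{x})<1.9$ in $(r_i^{-1}\widetilde{M},\tilde{x})$. By equivariant Gromov-Hausdorff compactness together with Theorem \ref{main_omega'}, after passing to a subsequence
$$(r_i^{-1}\widetilde{M},\tilde{x},\Gamma)\overset{GH}\longrightarrow (C(Z),z,G)\in\Omega(\widetilde{M},\Gamma),$$
and since $\gamma_i$ has uniformly bounded displacement at the basepoint in the rescaled metrics, a further subsequence produces a limit element $g\in G$ of $\gamma_i$ with $d(gz,z)=1$ and $d(g^2z,z)\le 1.9$.

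Next I would extract enough structure from Theorem \ref{main_omega'} to pin down $g$. By (3) the orbit $G\cdot z$ is an $l$-dimensional Euclidean subspace; writing $C(Z)=\mathbb{R}^k\times C(Z')$ with $z=(0,z')$ and $\mathrm{diam}(Z')<\pi$, this subspace must lie in $\mathbb{R}^k\times\{z'\}$, so every $g\in G$, written as $(A,v,\alpha)$, satisfies $\alpha z'=z'$ and $gz=(v,z')$ with $v\in\mathbb{R}^l\subseteq\mathbb{R}^k$. By (1) property \textit{(Q)} holds, so Lemma \ref{lin_Q_prop} yields property \textit{(P)} for the projection $(\mathbb{R}^k,0,p_Z(G))$; combined with the absence of a $\mathbb{Z}^m$-translation factor (Theorem \ref{main_omega'}(3) and Remark \ref{remains}), Lemma \ref{lin_P_decom} produces a direct-product decomposition $p_Z(G)=K\times\mathbb{R}^l$ in $O(k)\ltimes\mathbb{R}^k$, where $K$ fixes $0$ and $\mathbb{R}^l$ acts by translations.

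The contradiction is then a short computation in the semidirect product. For $g=(A,v,\alpha)$ with $|v|=1$ one has $g^2=(A^2,Av+v,\alpha^2)$ and hence
$$g^2z=(Av+v,\alpha^2 z')=(Av+v,z'),\qquad d(g^2z,z)=|Av+v|.$$
Because $p_Z(G)=K\times\mathbb{R}^l$ is a direct product, $(A,0)\in K$ and $(I,v)\in\mathbb{R}^l$ commute; but $(A,0)(I,v)=(A,Av)$ while $(I,v)(A,0)=(A,v)$, so commutativity forces $Av=v$. Therefore $d(g^2z,z)=|2v|=2$, contradicting $d(g^2z,z)\le 1.9$. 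The main obstacle I foresee is making sure the direct-product decomposition in Lemma \ref{lin_P_decom} really lifts the fact that $v$ lies in the translation factor into the algebraic commutation $Av=v$ — i.e.\ that property \textit{(Q)} plus the Euclidean orbit really is enough to rule out a rotational component of $A$ acting nontrivially on $\mathbb{R}^l$; once this is verified, the estimate $d(g^2z,z)=2$ is immediate.
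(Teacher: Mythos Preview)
Your argument is correct and follows the same route as the paper: contradiction, rescale by $|\gamma_i|^{-1}$, pass to an equivariant limit, use the direct-product splitting $p_Z(G)=\mathrm{Iso}_0 p_Z(G)\times T$ from property \textit{(P)} and Lemma~\ref{lin_P_decom} to see the translation part doubles, giving $d(g^2z,z)=2>1.9$. Your explicit derivation of $Av=v$ from commutativity of $(A,0)$ and $(I,v)$ is exactly what underlies the paper's one-line claim $p_Z(\gamma_\infty^2)=(A^2,2v)$, so your anticipated ``main obstacle'' is already handled by Lemma~\ref{lin_P_decom}. One small point: Lemma~\ref{almost_trans} is stated under the standing hypotheses of Theorem~\ref{main_abel}, where property \textit{(P)} for $(\mathbb{R}^k,0,p_Z(G))$ is assumed outright; you need not (and in the $k$-Euclidean case cannot) route through Theorem~\ref{main_omega'} and Lemma~\ref{lin_Q_prop} to obtain it, nor do you need the absence of a $\mathbb{Z}^m$-factor.
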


\begin{proof}
	We argue by contradiction. Suppose that there are a sequence of element $\gamma_i\in \Gamma$ such that $r_i=|\gamma_i|\to\infty$, but
	$$|\gamma_i^2|<1.9\cdot |\gamma_i|.$$
	We consider the tangent cone of $\widetilde{M}$ at infinity coming from the sequence $r_i^{-1}\to 0$: passing to a subsequence, we have
	$$(r_i^{-1}\widetilde{M},\tilde{x},\Gamma,\gamma_i)\overset{GH}\longrightarrow(C(Z),z,G,\gamma_\infty).$$
	$(C(Z),z)$ splits isometrically as $(\mathbb{R}^k\times C(Z'),(0,z'))$ with $\mathrm{diam}(Z')<\pi$. Since $(\mathbb{R}^k,0,p_Z(G))$ satisfies property \textit{(P)}, according to Lemma \ref{lin_P_decom}, we know that
	$$p_Z(G)=\mathrm{Iso}_0 p_Z(G)\times T,$$
    and the subgroup $\{e\}\times T$ acts as translations in the $\mathbb{R}^k$-factor. Due to our choice of $r_i$, $\gamma_\infty$ has displacement $1$ at $z$. Therefore, we can write $p_Z(\gamma_\infty)$ as
    $$p_Z(\gamma_\infty)=(A,v)\in \mathrm{Iso}_0 p_Z(G)\times T,$$
    where $v$ is a vector of length $1$. Note that $p_Z(\gamma_\infty^2)=(A^2,2v)$, then we see that $$d(\gamma_\infty^2\cdot z,z)=2.$$
	On the other hand, because $|\gamma_i^2|<1.9\cdot |\gamma_i|$, $\gamma_\infty^2$ should satisfy
	$$d(\gamma_\infty^2\cdot z,z)\le 1.9,$$
	a contradiction.
\end{proof}

\begin{cor}\label{almost_trans_cor}
	Let $R_0$ be the constant in Lemma \ref{almost_trans}. Suppose that $\gamma\in \Gamma$ with $|\gamma|\ge R_0$, then for any integer $m$, we have
	$$|\gamma^{(2^m)}|\ge 1.9^m \cdot |\gamma|.$$
\end{cor}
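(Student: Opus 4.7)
The plan is to prove this by induction on $m$, using Lemma \ref{almost_trans} repeatedly on the iterated squares $\gamma^{2^m}$. The base case $m=0$ is trivial since $1.9^0=1$, and the base case $m=1$ is exactly the statement of Lemma \ref{almost_trans}, which applies because $|\gamma|\ge R_0$.

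For the inductive step, I would assume $|\gamma^{2^m}|\ge 1.9^m\cdot|\gamma|$ and set $\gamma'=\gamma^{2^m}\in\Gamma$. Since $1.9^m\ge 1$, we have $|\gamma'|\ge |\gamma|\ge R_0$, so Lemma \ref{almost_trans} applies to $\gamma'$ and yields
$$|\gamma^{2^{m+1}}|=|(\gamma')^2|\ge 1.9\cdot|\gamma'|\ge 1.9\cdot 1.9^m\cdot|\gamma|=1.9^{m+1}\cdot|\gamma|,$$
which closes the induction.

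There is no real obstacle here; the key point is merely that the hypothesis $|\gamma'|\ge R_0$ needed to re-apply Lemma \ref{almost_trans} propagates automatically because the displacements are nondecreasing along the sequence $\{\gamma^{2^m}\}$. This is essentially a bootstrapping observation built on top of the single-step estimate provided by Lemma \ref{almost_trans}.
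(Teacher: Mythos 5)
Your induction is correct and is exactly the intended argument; the paper states the corollary without proof precisely because this bootstrapping of Lemma \ref{almost_trans} is immediate. The key observation you make explicit—that $|\gamma^{2^m}|\ge 1.9^m|\gamma|\ge|\gamma|\ge R_0$ propagates the hypothesis needed to reapply the lemma—is the whole content.
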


\begin{lem}\label{finite_comm}
	For $k\ge 1$, if $C_{k+1}(\Gamma)$ is finite, then $C_{k}(\Gamma)$ is also finite.
\end{lem}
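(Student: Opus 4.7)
The plan is to argue by contradiction. Suppose $C_{k+1}(\Gamma)$ is finite but $C_k(\Gamma)$ is infinite, and I will produce a single commutator $\gamma = [\alpha,\beta]$ with $\alpha \in \Gamma$ and $\beta \in C_{k-1}(\Gamma)$ to which Lemma \ref{id_nil} applies, then derive incompatible upper and lower bounds on $|\gamma^{l^2}|$ as $l \to \infty$.

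To choose $\gamma$, I would use that $\Gamma$ is finitely generated nilpotent, so $C_k(\Gamma)$ is finitely generated and $C_k(\Gamma)/C_{k+1}(\Gamma)$ is central in $\Gamma/C_{k+1}(\Gamma)$ by definition of the lower central series, hence abelian. By hypothesis this quotient is infinite and finitely generated, so it has a $\mathbb{Z}$-summand. Since it is generated by the images of commutators $[\alpha,\beta]$ with $\beta \in C_{k-1}(\Gamma)$, at least one such commutator must have infinite order modulo $C_{k+1}(\Gamma)$ — otherwise every generator would be torsion and the quotient finite. Fix this commutator as $\gamma$; it has infinite order in $\Gamma$ as well.

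For the upper bound, Lemma \ref{id_nil} gives $[\alpha^l, \beta^l] = \gamma^{l^2} h_l$ with $h_l \in C_{k+1}(\Gamma)$. The triangle inequality yields $|[\alpha^l,\beta^l]| \le 2l(|\alpha|+|\beta|)$, and the finiteness of $C_{k+1}(\Gamma)$ gives a uniform bound $|h_l| \le D := \max\{|h| : h \in C_{k+1}(\Gamma)\}$, so $|\gamma^{l^2}| \le 2l(|\alpha|+|\beta|) + D$. For the lower bound, the $\Gamma$-action on $\widetilde{M}$ is free and properly discontinuous, so $\{\eta \in \Gamma : |\eta| \le R\}$ is finite for each $R$; since $\gamma$ has infinite order this forces $|\gamma^n| \to \infty$. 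Choose $l_0$ with $|\gamma^{l_0^2}| \ge R_0$, where $R_0$ is the constant from Lemma \ref{almost_trans}, and apply Corollary \ref{almost_trans_cor} to $\gamma^{l_0^2}$ in place of $\gamma$ to obtain $|\gamma^{l_0^2 \cdot 2^m}| \ge 1.9^m R_0$ for every $m \ge 0$. Specializing to $m = 2p$ and $l = l_0 \cdot 2^p$, so that $l^2 = l_0^2 \cdot 2^m$, the two bounds combine to give
$$1.9^{2p} R_0 \;\le\; |\gamma^{l^2}| \;\le\; 2 l_0 \cdot 2^p (|\alpha| + |\beta|) + D.$$
Dividing by $2^p$ makes the left side grow like $(3.61/2)^p \to \infty$ while the right side remains bounded, the desired contradiction.

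The main obstacle is extracting, from the assumption that $C_k(\Gamma)$ is infinite, a single commutator whose $l^2$-th powers Lemma \ref{id_nil} controls — this is why the argument must route through the abelian quotient $C_k(\Gamma)/C_{k+1}(\Gamma)$ rather than working directly with a general product of commutators in $C_k(\Gamma)$. Once the commutator is fixed, the rest is a clean balancing of a polynomial-in-$l$ upper bound from the commutator identity against an exponential-in-$p$ lower bound from iterating the almost-translation property, read along the sparse subsequence $l = l_0 \cdot 2^p$.
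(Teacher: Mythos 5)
Your proof is correct and follows essentially the same approach as the paper: you pit the linear-in-$2^p$ upper bound coming from Lemma \ref{id_nil} together with the triangle inequality against the exponential lower bound from Corollary \ref{almost_trans_cor}, read along a dyadic subsequence of exponents, and the finiteness of $C_{k+1}(\Gamma)$ absorbs the error term into a bounded constant $D$. The only real divergence is the step that produces a commutator of infinite order: the paper shows directly that every commutator $[\alpha,\beta]$ with $\beta \in C_{k-1}(\Gamma)$ has finite order and then uses that $\mathrm{Tor}(\Gamma)$ is a finite subgroup of the finitely generated nilpotent $\Gamma$, whereas you route through the finitely generated abelian quotient $C_k(\Gamma)/C_{k+1}(\Gamma)$; the two are equivalent in content, and your reparametrization $l = l_0 \cdot 2^p$ in place of the paper's fixed $g_1 = \alpha^l$, $g_2 = \beta^l$ with the auxiliary exponent $p = 2^m$ is purely cosmetic.
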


\begin{proof}
	We claim that every element in $C_{k}(\Gamma)$ of form $[\alpha,\beta]$ has finite order, where $\alpha\in \Gamma$ and $\beta\in C_{k-1}(\Gamma)$. If this claim holds, then $C_{k}(\Gamma)$ is generated by elements of finite order. Recall that for a finitely generated nilpotent group, all elements of finite order form a finite subgroup of $\Gamma$, known as the torsion subgroup $\mathrm{Tor}(\Gamma)$. Hence $C_{k}(\Gamma)$, as a subgroup of $\mathrm{Tor}(\Gamma)$, must be finite as well. It remains to verify the claim.
	
	We argue by contradiction. Suppose that there are $\alpha\in \Gamma$ and $\beta \in C_{k-1}(\Gamma)$ such that $[\alpha,\beta]$ has infinite order. Because $\Gamma$ acts freely and discretely on $\widetilde{M}$, we can choose a large integer $l$ such that $$|[\alpha,\beta]^{(l^2)}|\ge R_0,$$
	where $R_0$ is the constant in Lemma \ref{almost_trans}. By Lemma \ref{id_nil},
	$$[\alpha^l,\beta^l]=[\alpha,\beta]^{(l^2)} \cdot h$$
	for some $h\in C_{k+1}(\Gamma)$. We put $g_1=\alpha^l$, $g_2=\beta^l$ and $\gamma=[\alpha,\beta]^{(l^2)}$, then we have
	$$[g_1,g_2]=\gamma\cdot h$$
	for some element $h\in C_{k+1}(\Gamma)$.
	To derive a contradiction, we continue to raise the power. For any integer $p$, we have
	\begin{align*}
		[g_1^p,g_2^p]&=[g_1,g_2]^{(p^2)}\cdot h'\\
		&=(\gamma\cdot h)^{(p^2)}\cdot h'\\
		&=\gamma^{(p^2)}\cdot h'',
	\end{align*}
	where $h'$ and $h''$ are certain elements in $C_{k+1}(\Gamma)$. We estimate the length of each sides when $p=2^m$. The left hand side has length
	$$|[g_1^{(2^m)},g_2^{(2^m)}]|\le 2\left(|g_1^{(2^m)}|+|g_2^{(2^m)}|\right)
	\le 2^{m+1}\cdot\left(|g_1|+|g_2|\right).$$
	Due to Lemma \ref{almost_trans_cor}, the right hand side has length
	$$|\gamma^{((2^m)^2)}\cdot h''|\ge |\gamma^{(2^{2m})}|-|h''|\ge 1.9^{2m}|\gamma|-|h''|\ge 1.9^{2m} |\gamma|-D,$$
	where $D=\max_{h\in C_{k+1}(\Gamma)} |h|<\infty$ because $C_{k+1}(\Gamma)$ is finite.
	
	As a result, it follows that
	$$1.9^{2m} |\gamma|-D\le 2^{m+1}\cdot\left(|g_1|+|g_2|\right)$$
	for any integer $m>0$. Since $|\gamma|,D,|g_1|$ and $|g_2|$ all have fixed values, we see a clear contradiction when $m$ is sufficiently large. This verifies the claim and completes the proof.
\end{proof}

\begin{proof}[Proof of Theorem \ref{main_abel}]
	Let $k$ be the nilpotency length of $\Gamma$. We apply Lemma \ref{finite_comm} inductively starting from $C_{k+1}(\Gamma)=\{e\}$; we conclude that $C_1(\Gamma)=[\Gamma,\Gamma]$ is finite. For any finitely generated group, $[\Gamma,\Gamma]$ being finite implies that the center of $\Gamma$ has finite index in $\Gamma$, which is a standard result in group theory. In particular, $\Gamma$ is virtually abelian. For completeness, we include the proof as below.
	
	Let $g_1,...,g_m$ be a set of generators of $\Gamma$. Let $Z(g_j)$ be the subgroup that consists of elements in $\Gamma$ commuting with $g_j$. It is clear that $Z(G)=\cap_{j=1}^m Z(g_j)$. Thus it suffices to show that each $Z(g_j)$ has finite index in $\Gamma$, or equivalently, there are only finitely many elements in $\Gamma$ conjugating to $g_j$. Indeed, for any $\gamma\in \Gamma$, the conjugation of $g_j$ under $\gamma$ is
	$$\gamma g_j \gamma^{-1}=[\gamma,g_j]\cdot g_j,$$
	which only has finitely options since $[\Gamma,\Gamma]$ is finite. This shows that $[\Gamma:Z(g_j)]<\infty$ for each $j$ and $[\Gamma:Z(\Gamma)]<\infty$ follows.
\end{proof}

\section{Euclidean volume growth and bounded index}

In this section, we show that in Corollary \ref{cor_v_abel} if in addition $\widetilde{M}$ has Euclidean volume growth, then the index can be uniformly bounded in terms of $n$ and volume growth constant. From the proof in Section 4, we see that the key is showing the finiteness of $[\Gamma,\Gamma]$ for a finitely generated nilpotent fundamental group. We prove that when $\widetilde{M}$ has Euclidean volume growth, the finiteness of $[\Gamma,\Gamma]$ turns into a uniform bound on its number.

\begin{thm}\label{main_index}
	Given $n\in\mathbb{N}$ and $L\in(0,1]$, there exists a constant $C(n,L)$ such that the following holds. 
	
	Let $M$ be an open $n$-manifold of $\mathrm{Ric}\ge 0$. Suppose that\\
	(1) $\widetilde{M}$ has Euclidean volume growth of constant at least $L$,\\
	(2) $\Gamma=\pi_1(M,x)$ is finitely generated and nilpotent with nilpotency length $\le n$,\\
	(3) $\#[\Gamma,\Gamma]$ is finite.\\
	Then $\#[\Gamma,\Gamma]\le C(n,L)$.
\end{thm}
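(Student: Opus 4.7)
I would argue by contradiction using compactness and rescaling. Suppose a sequence $(M_i,x_i)$ satisfies the hypotheses but $N_i := \#[\Gamma_i,\Gamma_i] \to \infty$, where $\Gamma_i = \pi_1(M_i,x_i)$. Let $D_i := \max_{h \in [\Gamma_i,\Gamma_i]}|h|$ and $s_i := \min_{h \in [\Gamma_i,\Gamma_i]\setminus\{e\}}|h|$. Bishop--Gromov combined with the Euclidean volume growth condition gives $N_i \le C(n,L)(D_i/s_i + 1)^n$, so $D_i/s_i \to \infty$. Rescale $\widetilde{M}_i$ by $s_i^{-1}$ (which preserves the volume growth constant $\ge L$); after rescaling, $s_i = 1$ and $D_i \to \infty$. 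I then induct on the nilpotency class from $j = n+1$ (where $C_{n+1}(\Gamma_i) = \{e\}$) down to $j = 1$, seeking in each step a basic commutator $[\alpha_i,\beta_i] \in C_j(\Gamma_i)$ with $|[\alpha_i,\beta_i]| \to \infty$; the identity $[\alpha_i^l,\beta_i^l] = [\alpha_i,\beta_i]^{l^2}\cdot h_i$ with $h_i \in C_{j+1}(\Gamma_i)$ from Lemma \ref{id_nil} allows inductive control via the previously bounded $\#C_{j+1}(\Gamma_i)$.

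The central technical input is a uniform effective version of Lemma \ref{almost_trans}: there exists $R_0(n,L) > 0$ such that for every $M$ meeting our hypotheses and every $\gamma \in \pi_1(M)$ with $|\gamma| \ge R_0(n,L)$, one has $|\gamma^2| \ge 1.9\,|\gamma|$. I would prove this by contradiction along a sequence $(M_i, x_i, \gamma_i)$ with $|\gamma_i| \to \infty$ but $|\gamma_i^2| < 1.9|\gamma_i|$. After rescaling by $|\gamma_i|^{-1}$ and passing to a subsequence, one obtains an equivariant tangent cone $(C(Z), z, G, \gamma_\infty)$ with volume growth constant $\ge L$, $d(\gamma_\infty z, z) = 1$, and $d(\gamma_\infty^2 z, z) \le 1.9$. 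Because $[\Gamma_i,\Gamma_i]$ is finite with displacements bounded in the unscaled metric, it collapses to the identity after dilation by $|\gamma_i|^{-1}\to 0$, so the limit group $G$ acts abelianly at the basepoint. Since $\gamma_\infty z \ne z$, the cone must split as $C(Z) = \mathbb{R}^k \times C(Z')$ with $k \ge 1$ and $\gamma_\infty = (A,v,\alpha)$ with $|v|=1$; the extension of Lemma \ref{commute_E} to this splitting, via the limit abelianness of $G$, forces the $O(k)$-component $A$ to act trivially on $\mathrm{span}(v)$, giving $d(\gamma_\infty^2 z, z) = 2$, the desired contradiction.

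With uniform $R_0(n,L)$ in hand, Corollary \ref{almost_trans_cor} and the length estimates from the proof of Lemma \ref{finite_comm} become quantitative. The inequality
\[
1.9^{2m}\,|[\alpha_i,\beta_i]^{l^2}| - D'_i \le 2^{m+1}\bigl(|\alpha_i^l| + |\beta_i^l|\bigr),
\]
with $D'_i$ the maximum displacement in $C_{j+1}(\Gamma_i)$ (uniformly bounded by induction together with the rescaling), must hold for every $m$, eventually forcing $|[\alpha_i,\beta_i]|$ itself to be uniformly bounded and contradicting $|[\alpha_i,\beta_i]| \to \infty$. The volume-packing bound then yields $\#C_j(\Gamma_i) \le C_j(n,L)$ at each inductive step, and ultimately $N_i \le C(n,L)$, giving the final contradiction.

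\textbf{Main obstacle.} The hard part is the uniform effective almost-translation statement above, because under the weaker assumption of Euclidean volume growth alone (no $(C(X),\epsilon_X)$-stability), the structural Theorem \ref{main_omega'} does not apply directly. One needs to show that for families of equivariant tangent cones arising from manifolds with volume growth constant $\ge L$ whose finite commutator subgroup collapses in the limit, the limit action is abelian at the basepoint and satisfies a form of property \textit{(P)}. Establishing this uniformly across the family should require a compactness argument in the space of equivariant tangent cones analogous to Section 3, but adapted to the Euclidean volume growth setting and exploiting the hypothesis $\#[\Gamma,\Gamma] < \infty$.
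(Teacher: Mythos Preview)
Your approach---making the Section~4 estimates uniform across a contradicting sequence of manifolds---is natural but substantially harder than what is needed, and the sketch you give for the key step has a genuine gap. In your ``uniform effective almost-translation'' lemma you assert that $[\Gamma_i,\Gamma_i]$ collapses after rescaling by $|\gamma_i|^{-1}$, making the limit group $G$ abelian; but finiteness of $[\Gamma_i,\Gamma_i]$ only bounds its diameter $D_i$ for each fixed $i$, and nothing in your contradiction setup forces $D_i/|\gamma_i|\to 0$. Even granting that $G$ is abelian, this alone does not yield property~\textit{(P)}: a single screw motion $(A,v,\alpha)$ with $|v|=1$ and $Av\neq v$ generates an abelian group yet has $d(\gamma_\infty^2 z,z)=|Av+v|<2$. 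Lemma~\ref{commute_E} tells you only that two elements of a nilpotent group commute iff their rotational parts do; it does not force $A$ to fix $v$ unless you already know the pure translation $(I,v,\mathrm{id})$ lies in $G$, which is precisely what property~\textit{(P)} asserts and what Theorem~\ref{main_omega'} supplies in the stable setting. There is a further issue downstream: bounding each basic commutator length $|[\alpha_i,\beta_i]|$ does not by itself bound $\#C_j(\Gamma_i)$ via packing, since products of such commutators may have much larger displacement.

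The paper avoids all of this by never making Section~4 effective. For a single $M$, pick any one equivariant tangent cone at infinity $(C(Z),z,G)$ and let $H$ be the limit of the finite group $[\Gamma,\Gamma]$. The quantitative no-small-subgroup property (Proposition~\ref{nss}), available precisely because of the Euclidean volume growth, guarantees that every nontrivial element of $[\Gamma,\Gamma]$ keeps displacement $\ge\delta(n,\omega_n L)/m$ at all scales, so nothing is lost in the limit and $\#H=\#[\Gamma,\Gamma]$ (Lemma~\ref{lem_index}(1)). Since each element of $[\Gamma,\Gamma]$ is a finite product of commutators $[\alpha,\beta]$ with fixed (hence, after rescaling, vanishing) displacement at $\tilde{x}$, it limits into $[K,K]$ with $K=\mathrm{Iso}_z G$ (Lemma~\ref{lem_index}(2)). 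Now $K$ is a compact nilpotent Lie subgroup of $\mathrm{Isom}(Z)$ with $Z\in\mathcal{M}_{cs}(n,0,\omega_n L)$, and $\#[K,K]$ is bounded uniformly by first bounding $[K:Z(K)]$ (a compactness argument using that the identity component of any compact nilpotent Lie group is central, Lemma~\ref{G0_central}, together with Proposition~\ref{nss} again to make the approximated homomorphisms of Corollary~\ref{good_map_cor} injective) and then invoking Schur's Lemma~\ref{Schur_lemma}. The entire argument lives in a single limit space and uses compact Lie group structure; no uniform $R_0(n,L)$ is ever required.
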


\begin{cor}\label{cor_C_abel}
	Given $n\in\mathbb{N}$ and $L\in(0,1]$, there exists a constant $C(n,L)$ such that the following holds.
	
	Let $M$ be an open manifold of $\mathrm{Ric}\ge 0$. Suppose that $\widetilde{M}$ has Euclidean volume growth of constant at least $L$ and one of the following conditions is true:\\
	(1) $\widetilde{M}$ is $(C(X),\epsilon_X)$-stable at infinity for some $X\in \mathcal{M}_{cs}(n,0)$ and the corresponding constant $\epsilon_X>0$ as in Proposition \ref{to_model}; or\\
	(2) $\widetilde{M}$ is $k$-Euclidean at infinity.\\
	Then $\pi_1(M)$ contains a normal abelian subgroup of index at most $C(n,L)$.
\end{cor}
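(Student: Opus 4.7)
My plan is to reduce to the setting of Theorem \ref{main_index} applied to a suitable normal nilpotent subgroup $N$ of $\pi_1(M)$, and then to translate the bound $\#[N,N]\le C(n,L)$ into a bound on $[N:Z(N)]$ by quantifying the centralizer argument from the end of the proof of Theorem \ref{main_abel}.

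First, I would invoke finite generation of $\pi_1(M)$---established by Theorem \ref{main_omega'} together with Wilking's reduction \cite{Wi} in case (1), or by \cite{Pan2} in case (2)---together with the Kapovitch--Wilking theorem \cite{KW} to extract a normal nilpotent subgroup $N\trianglelefteq\pi_1(M)$ of index at most $C_1(n)$, nilpotency length at most $n$, and (via Reidemeister--Schreier applied to the uniform generator bound on $\pi_1(M)$ from \cite{KW}) generated by at most $C_2(n)$ elements. The intermediate covering $\widetilde{M}/N$ is then an open $n$-manifold of $\mathrm{Ric}\ge 0$ whose universal cover is $\widetilde{M}$, so it still has Euclidean volume growth of constant $\ge L$ and still satisfies the almost-stability (respectively $k$-Euclidean) hypothesis on its universal cover.

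Next I would rerun the argument in the proof of Corollary \ref{cor_v_abel} for $N$: Theorem \ref{main_omega'} combined with Lemma \ref{lin_Q_prop} in case (1), or Theorem \ref{omega_k_eu} in case (2), ensure that the hypotheses of Theorem \ref{main_abel} hold for $(\widetilde{M}/N,N)$. The proof of Theorem \ref{main_abel}---inductive application of Lemma \ref{finite_comm} up the lower central series of $N$---then yields that $[N,N]$ is finite. With $[N,N]$ finite, Theorem \ref{main_index} applies to $\widetilde{M}/N$ with fundamental group $N$ and produces the quantitative bound $\#[N,N]\le C_3(n,L)$.

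Finally, I would quantify the last step of the proof of Theorem \ref{main_abel}: for a generating set $g_1,\dots,g_m$ of $N$ with $m\le C_2(n)$, the identity $\gamma g_j\gamma^{-1}=[\gamma,g_j]\,g_j$ shows $[N:Z(g_j)]\le \#[N,N]$, and from $Z(N)=\bigcap_{j=1}^m Z(g_j)$ I deduce
$$[N:Z(N)]\le \#[N,N]^{m}\le C_3(n,L)^{C_2(n)}.$$
Since $Z(N)$ is characteristic in $N$ and $N$ is normal in $\pi_1(M)$, $Z(N)$ is normal in $\pi_1(M)$ and provides a normal abelian subgroup of index at most $C_1(n)\cdot C_3(n,L)^{C_2(n)}=:C(n,L)$, depending only on $n$ and $L$. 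The only point requiring genuine care---rather than a real obstacle---is producing the uniform generator bound for $N$; everything else is a direct assembly of tools already established in the paper.
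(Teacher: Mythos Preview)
Your proposal is correct and follows essentially the same route as the paper: pass to a normal nilpotent subgroup $N$ of bounded index, nilpotency length, and generator count via \cite{KW}; verify $[N,N]$ is finite using Theorem \ref{main_abel}; bound $\#[N,N]$ by Theorem \ref{main_index}; and then quantify the centralizer argument to bound $[N:Z(N)]$. You are simply more explicit than the paper about the intermediate cover $\widetilde{M}/N$, about checking the finiteness hypothesis in Theorem \ref{main_index}, and about the inequality $[N:Z(N)]\le \#[N,N]^{m}$, all of which the paper leaves implicit.
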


\begin{proof}
	By \cite{KW}, $\Gamma=\pi_1(M,x)$ contains a normal nilpotent subgroup $N$ of index at most $C_1(n)$ and of nilpotency length at most $n$. It suffices to bound $[N:Z(N)]$ in terms of $n$ and $L$. 
	
	By the proof of Theorem \ref{main_abel}, it is easy to see that $[N:Z(N)]$ can be bounded by a constant only involving $\#[N,N]$ and the number of generators of $N$. Since we can uniformly bound the number of generators by some constant $C_2(n)$ \cite{KW}, the result now follows from Theorem \ref{main_index}.
\end{proof}

To prove Theorem \ref{main_index}, we again investigate equivariant tangent cones of $(\widetilde{M},\Gamma)$ at infinity. We need a quantitative version of the no small subgroup property for isometry group of any non-collapsing Ricci limit space \cite{PR}.

\begin{prop}\label{nss}\cite{PR}
	Given $n,v>0$, there exists a positive constant $\delta(n,v)$ such that for any Ricci limit space $(X,x)\in\mathcal{M}(n,-1,v)$
	and any nontrivial subgroup $H$ of $\mathrm{Isom}(X)$, we have $$D_{1,x}(H)\ge \delta(n,v),$$
	where
	$$D_{1,x}(H)=\sup_{y\in B_1(x),h\in H} d(hy,y).$$
\end{prop}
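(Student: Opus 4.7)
The plan is a contradiction argument using equivariant Gromov--Hausdorff compactness together with a unique-continuation principle for isometries on non-collapsed Ricci limit spaces. Suppose the conclusion fails; then for the given $n,v$ there exist $(X_i,x_i)\in\mathcal{M}(n,-1,v)$ and nontrivial closed subgroups $H_i\leq\mathrm{Isom}(X_i)$ with $\delta_i:=D_{1,x_i}(H_i)\to 0$. Gromov's volume precompactness together with Fukaya--Yamaguchi equivariant compactness yields, after passing to a subsequence,
\[
(X_i,x_i,H_i)\overset{GH}\longrightarrow(X_\infty,x_\infty,H_\infty),
\]
where $(X_\infty,x_\infty)\in\mathcal{M}(n,-1,v)$ and $H_\infty$ is a closed subgroup of the Lie group $\mathrm{Isom}(X_\infty)$ (Theorem \ref{limit_Lie}). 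Because the displacement on $B_1$ uniformly vanishes, every element of $H_\infty$ fixes $\overline{B_1(x_\infty)}$ pointwise.

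The first key step is to prove that on a non-collapsed Ricci limit space any isometry fixing a nonempty open set must be the identity. This rests on density of the regular set (Cheeger--Colding), existence of $\epsilon$-splitting maps supplying almost-Euclidean coordinates near regular points, and a connectedness argument propagating triviality from the fixed open set to the whole space. Granting this, $H_\infty=\{e\}$. The second key step is to contradict the triviality of $H_\infty$ by extracting nontriviality from the $H_i$. A single element $h_i\neq e$ of $H_i$ might escape at infinity in the isometry group, so one works instead with a uniform no-small-subgroup property: the Lie structure of $\mathrm{Isom}(X_i)$ should have dimension and component count bounded in terms of $n$ and $v$, so that there is a neighborhood of the identity, measured by displacement on $B_1(x_i)$ and of size uniform in $i$, containing no nontrivial subgroup. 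For $\delta_i$ below this threshold the nontriviality of $H_i$ is directly impossible.

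The principal obstacle is precisely this uniform no-small-subgroup statement, since a priori the size of the NSS neighborhood of $\mathrm{Isom}(X_i)$ could depend on the global geometry of $X_i$. The cleanest way to pin it down is again by contradiction: a quantitative failure would produce a sequence of nontrivial subgroups of vanishing $B_1$-displacement whose equivariant limit is a nontrivial closed subgroup of $\mathrm{Isom}(X_\infty)$ fixing $\overline{B_1(x_\infty)}$, directly contradicting the unique-continuation step. Thus the two steps intertwine, and the technical heart of the proof is the rigidity statement for isometries on non-collapsed Ricci limit spaces that fix an open set; once this rigidity is in place, the compactness argument and the extraction of nontrivial limit elements close the proof.
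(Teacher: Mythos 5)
The paper does not prove this proposition; it cites it from \cite{PR}, so there is no in-paper argument to compare against. I therefore address only the internal soundness of your sketch.

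Your step~1 (rigidity: an isometry of a non-collapsed Ricci limit space fixing an open set is the identity) and step~2 (pass to an equivariant Gromov--Hausdorff limit $(X_\infty,x_\infty,H_\infty)$ in which $H_\infty$ fixes $\overline{B_1(x_\infty)}$, hence $H_\infty=\{e\}$) are reasonable and point in the right direction, although step~1 is itself a nontrivial assertion that needs to be proved and does most of the heavy lifting. The genuine gap is in your step~3, where you close the argument. You write that a quantitative failure ``would produce a sequence of nontrivial subgroups of vanishing $B_1$-displacement whose equivariant limit is a \emph{nontrivial} closed subgroup of $\mathrm{Isom}(X_\infty)$ fixing $\overline{B_1(x_\infty)}$.'' This is exactly what does \emph{not} follow. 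If $D_{1,x_i}(H_i)\to 0$ then every element of $H_i$ has vanishing displacement on $B_1(x_i)$, and nothing forces any nontrivial element to survive in the limit: the equivariant limit $H_\infty$ of nontrivial subgroups can perfectly well be $\{e\}$ (all generators converge to the identity). That is precisely the small-subgroup phenomenon the proposition is meant to rule out, so invoking a nontrivial limit presupposes the conclusion. Your own earlier paragraph correctly flags this circularity, but the ``cleanest way to pin it down'' you then offer does not actually break it.

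What is missing is a mechanism that converts the qualitative nontriviality of each $H_i$ into something that persists under the limit. The standard ways to do this all require more work than the sketch suggests: for instance, (a) a blow-up argument at a point where $h_i\neq e$ acts with maximal (or normalized) displacement, rescaling so that the displacement is $1$ there while the Ricci bound degenerates to nonnegative, and then contradicting unique continuation on the resulting tangent-cone-type limit; or (b) passing to the limit of the \emph{full} isometry groups $(X_i,x_i,\mathrm{Isom}(X_i))\to(X_\infty,x_\infty,G_\infty)$, using the Mazur--Rong--Wang approximated homomorphisms (Corollary~\ref{good_map_cor}) into the fixed Lie group $G_\infty$, applying the classical no-small-subgroup property of $G_\infty$ to kill the images $\psi_i(H_i)$, and then separately analyzing the kernels $\ker\psi_i$, which again requires a quantitative rigidity input on $X_i$ and not just on $X_\infty$. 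In either route the key technical lemma lies deeper than ``the limit group is nontrivial.'' As written, your argument does not reach a contradiction, and this is the crux of the proposition rather than a routine detail.
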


\begin{lem}\label{lem_index}
	Let $M$ be an open $n$-manifold with the assumptions in Theorem \ref{main_index}. For any sequence $r_i\to\infty$, we consider $$(r_i^{-1}\widetilde{M},\tilde{x},[\Gamma,\Gamma],\Gamma)\overset{GH}\longrightarrow (C(Z),z,H,G).$$
	Then\\
	(1) $\#H=\#[\Gamma,\Gamma]$;\\
	(2) $H\subseteq [K,K]$, where $K=\mathrm{Iso}_zG$.
\end{lem}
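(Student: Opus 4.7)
The plan is to treat the two parts separately, using Proposition~\ref{nss} for (1) and the structure of the approximate homomorphisms from Corollary~\ref{good_map_cor} for (2).

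For (1), the key point is that the Euclidean volume growth assumption gives a uniform no-small-subgroup bound on every rescaled cover. Since $\widetilde{M}$ has Euclidean volume growth of constant $\ge L$, relative volume comparison gives $\mathrm{vol}(B_1(\tilde{x})) \ge v$ in every $r_i^{-1}\widetilde{M}$ for some $v=v(n,L)>0$, so all these spaces lie in $\mathcal{M}(n,0,v)$. Proposition~\ref{nss} then supplies a constant $\delta_0>0$, independent of $i$, such that every nontrivial subgroup $N \subseteq \mathrm{Isom}(r_i^{-1}\widetilde{M})$ satisfies $D_{1,\tilde{x}}(N) \ge \delta_0$. Diagonalizing over the finite set $[\Gamma,\Gamma]$ via the approximate homomorphisms $\phi_i:\Gamma\to G$, I obtain an induced map $\Psi:[\Gamma,\Gamma]\to H$. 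Suppose some nontrivial $\gamma$ had $\Psi(\gamma)=e$; then $\Psi(\langle\gamma\rangle)=\{e\}$, but in each $r_i^{-1}\widetilde{M}$ the nontrivial finite subgroup $\langle\gamma\rangle$ has $D_{1,\tilde{x}}\ge\delta_0$, realized by some $h_i\in\langle\gamma\rangle$ and $p_i\in \bar B_1(\tilde{x})$. Since $\langle\gamma\rangle$ is finite, I can pass to a subsequence where $h_i$ is constantly some $h$ and $p_i\to p_\infty\in\bar B_1(z)$, forcing $d(\Psi(h)p_\infty,p_\infty)\ge\delta_0$, which contradicts $\Psi(h)=e$. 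Thus $\Psi$ is injective; combined with the obvious bound $\#H\le\#[\Gamma,\Gamma]$ (the limit of a finite group has no more elements than the group), this yields $\#H=\#[\Gamma,\Gamma]$.

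For (2), the central observation is that every fixed $\alpha\in\Gamma$ has fixed displacement $|\alpha|=d(\alpha\tilde{x},\tilde{x})$, so the rescaled displacement at the base point tends to zero: $|\alpha|/r_i\to0$. Hence $d(\phi_i(\alpha)z,z)\to 0$, and on any ball $\bar B_R(z)$ the isometry $\phi_i(\alpha)$ has displacement bounded by $2R+|\alpha|/r_i+o(1)$, which is uniformly controlled. Arzel\`a--Ascoli applied to isometries of the pointed space $(C(Z),z)$ then produces a subsequential limit $\alpha_\infty\in G$ with $\alpha_\infty z=z$, so $\alpha_\infty\in K$. Now fix $\gamma\in[\Gamma,\Gamma]$ and write $\gamma=\prod_j[\alpha_j,\beta_j]^{\epsilon_j}$ with $\alpha_j,\beta_j\in\Gamma$. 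Diagonalizing over the finitely many $\alpha_j,\beta_j$, I can extract a single subsequence along which $\phi_i(\alpha_j)\to(\alpha_j)_\infty\in K$ and $\phi_i(\beta_j)\to(\beta_j)_\infty\in K$ for all $j$. Upgrading $\phi_i$ to an exact group homomorphism for large $i$ via Proposition~\ref{good_map}, we have $\phi_i(\gamma)=\prod_j[\phi_i(\alpha_j),\phi_i(\beta_j)]^{\epsilon_j}$, and passing to the limit gives $\Psi(\gamma)=\prod_j[(\alpha_j)_\infty,(\beta_j)_\infty]^{\epsilon_j}\in[K,K]$. Hence $H=\Psi([\Gamma,\Gamma])\subseteq[K,K]$.

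The main technical delicacy is transferring the commutator structure through the approximate equivariant convergence. This is exactly what Proposition~\ref{good_map} buys us: it replaces the $\delta_i$-approximated $\phi_i$ by an exact Lie-group homomorphism for $i$ large, so commutators of $\phi_i$-images are literally $\phi_i$-images of commutators, and continuity of multiplication and inversion in $G$ ensures that limits of commutators are commutators of limits. With this in place, the two displacement estimates above -- the uniform lower bound from Proposition~\ref{nss} for (1) and the uniform upper bound $|\alpha|/r_i\to 0$ for (2) -- yield the two conclusions.
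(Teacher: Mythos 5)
Your proof is correct and follows essentially the same two-step strategy as the paper: apply Proposition~\ref{nss} to rule out collapse of the finite group $[\Gamma,\Gamma]$ in the limit for part (1), and for part (2) write each element of $[\Gamma,\Gamma]$ as a product of commutators $[\alpha,\beta]$ with $\alpha,\beta\in\Gamma$, observe that fixed elements of $\Gamma$ have rescaled displacement $|\alpha|/r_i\to 0$ and hence limit into $K=\mathrm{Iso}_z G$, and conclude that the limit of the product lies in $[K,K]$. Your invocation of Proposition~\ref{good_map} to upgrade $\phi_i$ to an exact homomorphism is not needed---the equivariant Gromov--Hausdorff convergence already makes limits of products equal products of limits, which is all that is used---but it does no harm; and your contradiction formulation of (1) is a slightly more careful rendering of the same injectivity argument the paper makes.
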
 

\begin{proof}
	(1) Since $[\Gamma,\Gamma]$ is a finite group, its limit $H$ must be a finite group fixing $z$.
	Let $m=\#[\Gamma,\Gamma]$. By Proposition \ref{nss},
	$$D_{1,\tilde{x}}(g)\ge \delta(n,\omega_n L)/m$$
	for all $g\in [\Gamma,\Gamma]$ on any $r_i^{-1}\widetilde{M}$, where $\omega_n$ is the volume of a unit ball in $\mathbb{R}^n$. This implies that $H$ also has order $m$.
	
	(2) Because $[\Gamma,\Gamma]$ is a finite group, we can write each $g_j\in [\Gamma,\Gamma]$ as
	$$g_j=\prod_{t=1}^{l_j}[\alpha_{tj},\beta_{tj}]$$
	for some $\alpha_{tj},\beta_{tj}\in \Gamma$. Passing to a subsequence if necessary, for each $\alpha_{tj}$ and each $\beta_{tj}$, we have convergence
	$$(r_i^{-1}\widetilde{M},\tilde{x},\alpha_{tj},\beta_{tj})\overset{GH}\longrightarrow (C(Z),z,\alpha^\infty_{tj},\beta^\infty_{tj}),\quad t=1,...,l_j,\quad j=1,...,m,$$
	where $\alpha^\infty_{tj}$ and $\beta^\infty_{tj}$ are elements in $K$. It is clear that $H$ consists of elements
	$$g^\infty_j=\prod_{t=1}^{l_j}[\alpha^\infty_{tj},\beta^\infty_{tj}]\in [K,K],\quad j=1,...,m.$$
\end{proof}

From Lemma \ref{lem_index}, we see that if we can find a uniform bound $C(n,L)$ so that $\#[K,K]\le C(n,L)$, then we would have the desired bound for $\#[\Gamma,\Gamma]$:
$$\#[\Gamma,\Gamma]=\# H \le \# [K,K] \le C(n,L).$$
Also note that $K$ is a closed nilpotent Lie subgroup of $\mathrm{Isom}(Z)$ with nilpotency length at most $n$, where $Z\in\mathcal{M}_{cs}(n,0,\omega_nL)$. Hence Theorem \ref{main_index} follows directly from the lemma below.

\begin{lem}\label{comm_bound}
	Given $n,v>0$, there exists constants $C_1(n,v)$ and $C_2(n,v)$ such that the following holds.
	
	Let $Z\in \mathcal{M}_{cs}(n,0,v)$ be a space and let $K$ be a closed nilpotent Lie subgroup of $\mathrm{Isom}(Z)$ with nilpotency length at most $n$. Then\\
	(1) $[K:Z(K)]\le C_1(n,v)$,\\
	(2) $\#[K,K]\le C_2(n,v)$.
\end{lem}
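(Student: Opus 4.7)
The plan is to reduce both estimates to bounds on a finite nilpotent quotient of $K$ by combining its compact Lie structure with the no-small-subgroup property of Proposition \ref{nss}. First, since $K$ acts effectively and isometrically on the compact space $Z$, it is a closed subgroup of $\mathrm{Isom}(Z)$, hence itself a compact Lie group. Its identity component $K_0$ is a compact connected nilpotent Lie group, therefore a torus. I claim $K_0$ is central: the conjugation action $c: K \to \mathrm{Aut}(K_0)$ factors through the finite quotient $K/K_0$, so every $c_g$ has finite order in $GL(\mathfrak{k}_0)$; on the other hand, nilpotency of $K$ of length at most $n$ implies $(c_g - \mathrm{Id})^n = 0$ on $\mathfrak{k}_0$, so $c_g$ is unipotent. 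A unipotent element of finite order must be the identity, so $c_g = \mathrm{Id}$. Setting $Q := K/K_0$, we obtain a central extension $1 \to K_0 \to K \to Q \to 1$ with $Q$ a finite nilpotent group of length at most $n$.

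Next I bound $\dim K$ and $|Q|$ in terms of $n$ and $v$. Any isometry of $Z$ extends canonically to a cone isometry of $C(Z) \in \mathcal{M}(n,0,v)$ fixing the vertex, so $K \hookrightarrow \mathrm{Isom}(C(Z))$, and Proposition \ref{nss} yields a uniform gap $\delta = \delta(n,v) > 0$ for every nontrivial subgroup of $K$. The bound $\dim K \le D(n)$ follows from standard control on the isometry-group dimension of non-collapsing Ricci limit spaces. To bound $|Q|$, I apply Proposition \ref{nss} to the cyclic subgroups $\langle g_i g_j^{-1} \rangle$ arising from pairs of coset representatives, combined with a Bishop--Gromov packing estimate on a $K_0$-invariant slice in the unit ball around the vertex; centrality of $K_0$ is used to ensure that distinct cosets of $K_0$ produce genuinely distinct orbit structures, giving $|Q| \le N(n,v)$.

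For part (1), centrality of $K_0$ gives $Z(K) \supseteq K_0$ with $Z(K)/K_0 = Z(Q)$, so
\[
[K : Z(K)] = [Q : Z(Q)] \le |Q| \le N(n,v) =: C_1(n,v).
\]

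For part (2), for any $t \in K_0$ centrality gives $[k_1 t, k_2] = k_1(t k_2 t^{-1})k_1^{-1}k_2^{-1} = [k_1, k_2]$ (and similarly on the right), so the commutator depends only on the $Q$-images of its inputs; hence the set of commutator values has cardinality at most $|Q|^2$, and $[K, K]$ is generated by this finite set. I then show every commutator is torsion of bounded order: when $K$ is $2$-step nilpotent, $[K, K] \subseteq Z(K)$ is abelian and the commutator map is bi-multiplicative, yielding $[k_1, k_2]^{|Q|} = [k_1^{|Q|}, k_2] = e$ since $k_1^{|Q|} \in K_0$ is central. For higher nilpotency, I iterate this argument along the lower central series $K \supseteq C_1(K) \supseteq \cdots \supseteq C_n(K) = \{e\}$, using that each $C_i(K)/C_{i+1}(K)$ is central in $K/C_{i+1}(K)$. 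Since torsion elements of bounded order in a compact Lie group form a finite subgroup whose size is controlled by $|Q|$, $\dim K_0$, and the torsion bound, this yields $\#[K, K] \le C_2(n,v)$. The main technical hurdle is the component-count bound $|Q| \le N(n,v)$: Proposition \ref{nss} is a subgroup-level estimate, and converting it into control on the number of cosets of $K_0$ demands a careful packing argument that exploits the centrality of $K_0$ together with the bounded-geometry structure of $Z$ to pass from a sup-over-subgroup displacement to a packing of individual coset representatives.
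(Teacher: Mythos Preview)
Your argument has a genuine gap at the step where you claim $|Q| = |K/K_0| \le N(n,v)$. This bound is simply false. Take $Z = S^{n-1}$ (so $C(Z) = \mathbb{R}^n \in \mathcal{M}(n,0,v)$ for $v=\omega_n$) and $K = \mathbb{Z}/N \subset SO(2) \subset O(n)$ for arbitrary $N$. Then $K$ is a closed abelian (hence nilpotent of length $1$) subgroup of $\mathrm{Isom}(Z)$, $K_0 = \{e\}$, and $|Q| = N$ is unbounded. The lemma's conclusion still holds here trivially since $[K:Z(K)] = 1$ and $[K,K] = \{e\}$, but your chain $[K:Z(K)] \le |Q| \le N(n,v)$ breaks. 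The packing argument you sketch cannot work: Proposition~\ref{nss} controls the displacement of a whole subgroup, and in the example above every nontrivial subgroup of $\mathbb{Z}/N$ contains an element of displacement close to $\mathrm{diam}(S^{n-1})$, so the no-small-subgroup property gives no obstruction to $N$ being large. Your part (2) inherits the same defect, since both your count of commutator values and your torsion-order bound are expressed in terms of $|Q|$.

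The paper avoids this by never bounding $|K/K_0|$. Instead it argues (1) by contradiction and compactness: if $[K_i:Z(K_i)] \to \infty$ along a sequence $(Z_i,K_i)$, pass to a Gromov--Hausdorff limit $(Z_\infty,K_\infty)$, use Corollary~\ref{good_map_cor} to get approximated Lie homomorphisms $\psi_i:K_i \to K_\infty$, and invoke Proposition~\ref{nss} to show $\ker\psi_i$ is trivial for large $i$. Then, since $(K_\infty)_0$ is central in $K_\infty$ (this is your observation that $K_0 \subseteq Z(K)$, applied to the limit), injectivity of $\psi_i$ gives $\psi_i^{-1}((K_\infty)_0) \subseteq Z(K_i)$, whence $[K_i:Z(K_i)] \le [K_\infty:(K_\infty)_0] < \infty$, a contradiction. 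Part (2) then follows immediately from (1) via Schur's lemma (Lemma~\ref{Schur_lemma}), with no need for the torsion analysis you propose.
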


To prove Lemma \ref{comm_bound}, we recall a classical result in group theory: Schur's Lemma.

\begin{lem}\label{Schur_lemma}
	(Schur) Let $G$ be a group. If $[G:Z(G)]=k$ is finite, then
	$[G,G]$ is a finite group of at most $C(k)=k^{(2k^3)}$ many elements.
\end{lem}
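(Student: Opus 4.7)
My plan is to prove Schur's Lemma via a four-ingredient argument: bounding the number of commutators, bounding the exponent of $[G,G] \cap Z(G)$ using the transfer map, bounding the index of that intersection in $[G,G]$ by embedding into $G/Z(G)$, and bounding the number of generators by Schreier's formula. Set $Z = Z(G)$ and $k = [G:Z]$ throughout.

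First I would observe that if $z, z' \in Z$ then $[az, bz'] = [a,b]$ for all $a, b \in G$, so a commutator depends only on the cosets $aZ, bZ$. Thus the set of commutators has at most $k^2$ elements, and in particular $[G,G]$ is generated by a set $S$ with $|S| \le k^2$. Next, since $Z$ is an abelian subgroup of finite index $k$ in $G$, the transfer homomorphism $V \colon G \to Z$ is well-defined; because its target is abelian, $V$ kills $[G,G]$. The standard transfer formula, specialized to a central subgroup, yields $V(z) = z^k$ for every $z \in Z$. Combining these two facts, every element $z \in [G,G] \cap Z$ satisfies $z^k = V(z) = e$, so $[G,G] \cap Z$ has exponent dividing $k$.

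Then I would use the natural injection $[G,G]/([G,G] \cap Z) \hookrightarrow G/Z$, which gives $[\,[G,G] : [G,G] \cap Z\,] \le k$. Apply Schreier's subgroup formula to the pair $[G,G] \cap Z \le [G,G]$: a subgroup of index $m$ in a group generated by $d$ elements requires at most $m(d-1) + 1 \le md$ generators. With $d \le k^2$ and $m \le k$, the group $[G,G] \cap Z$ is generated by at most $k \cdot k^2 = k^3$ elements. Being abelian of exponent at most $k$, it has cardinality at most $k^{k^3}$. Multiplying by the index, we conclude
\[
|[G,G]| \;\le\; k \cdot k^{k^3} \;=\; k^{k^3 + 1} \;\le\; k^{2k^3},
\]
as desired.

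The only delicate point is the transfer step: one must verify the classical identity $V(z) = z^k$ for central $z$, which follows by choosing any transversal $\{g_1, \ldots, g_k\}$ of $Z$ in $G$ and using that $z$ commutes with each $g_i$, so the product defining $V(z)$ collapses to $z^k$. Everything else — the coset-invariance of commutators, the injection into $G/Z$, Schreier's formula, and the exponent-generator count for finitely generated abelian groups — is elementary. Note that $k=1$ gives the trivial abelian case, consistent with the bound $1^{2} = 1$.
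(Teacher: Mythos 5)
Your proof is correct, and it is worth noting that the paper itself gives no proof of this lemma at all: it simply recalls Schur's theorem as a classical fact, with the explicit constant $k^{(2k^3)}$ coming from the standard elementary argument (there are at most $k^2$ distinct commutators, since $[az,bz']=[a,b]$ for central $z,z'$, and every element of $[G,G]$ can be written as a product of at most $k^3$ commutators, giving the bound $(k^2)^{k^3}=k^{2k^3}$). Your route is genuinely different after the first step: instead of bounding the length of commutator words, you use the transfer homomorphism $V\colon G\to Z(G)$ (which kills $[G,G]$ and restricts to $z\mapsto z^k$ on the center) to force $[G,G]\cap Z(G)$ to have exponent dividing $k$, then combine the embedding $[G,G]/([G,G]\cap Z(G))\hookrightarrow G/Z(G)$ with the Schreier index formula to bound the number of generators of $[G,G]\cap Z(G)$ by $k^3$. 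All the individual steps check out -- the coset-invariance of commutators, the transfer computation for central elements, the Schreier bound $m(d-1)+1\le md$ applied to the $d\le k^2$ commutator generators, and the count $|A|\le k^{r}$ for an abelian group of exponent dividing $k$ on $r$ generators -- and your final bound $k^{k^3+1}$ is in fact slightly sharper than the stated $k^{2k^3}$, which it implies for all $k\ge 1$. The trade-off is that your argument invokes two pieces of machinery (transfer and Schreier's formula) rather than the purely combinatorial commutator-counting of the classical proof, but it buys a cleaner structural picture and a marginally better constant; either proof is fully adequate for the way the lemma is used in Lemma \ref{comm_bound}, where only finiteness of $\#[K,K]$ with a bound depending on $[K:Z(K)]$ matters.
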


We also need a structure result for compact nilpotent Lie groups.

\begin{lem}\label{G0_central}
	Let $G$ be a compact nilpotent Lie group, then $G_0$, the identity component of $G$, is central in $G$.
\end{lem}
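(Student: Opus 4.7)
My plan is to reduce the statement to a short linear-algebra argument on a torus. First I would observe that $G_0$ is a compact connected Lie group, and is nilpotent as a closed subgroup of the nilpotent group $G$. A connected Lie group is nilpotent iff its Lie algebra is, and the Lie algebra of a compact Lie group is reductive, so it splits as its center plus a semisimple ideal; if in addition the algebra is nilpotent the semisimple part must vanish (a semisimple nilpotent Lie algebra is zero). Hence $\mathrm{Lie}(G_0)$ is abelian, so $G_0$ is a compact connected abelian Lie group, i.e.\ a torus $T^n$.

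Next, fix $g \in G$ and consider the conjugation automorphism $C_g : G_0 \to G_0$. Under the identification $\mathrm{Aut}(T^n) \cong GL_n(\mathbb{Z})$, write $C_g$ as an integer matrix $A_g$. Because $G_0$ is abelian, the map $f_g(t) = [g,t] = g t g^{-1} t^{-1}$ is a group endomorphism of $G_0$, and in additive coordinates on $\mathbb{R}^n/\mathbb{Z}^n$ it is multiplication by $A_g - I$. Two observations now do the work. First, compactness of $G$ gives that $G/G_0$ is finite, of some order $m$; since $G_0$ is abelian, conjugation by any element of $G_0$ is trivial on $G_0$, so $A_g^m = A_{g^m} = I$. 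Thus $A_g$ has finite order and so is diagonalizable over $\mathbb{C}$ with roots-of-unity eigenvalues. Second, nilpotency of $G$ implies that for $N$ larger than the nilpotency class, the iterate $f_g^N$ is trivial on $G_0$ (since $f_g^N(t) \in C_N(G) = \{e\}$); in additive coordinates $(A_g - I)^N$ then sends all of $\mathbb{R}^n$ into $\mathbb{Z}^n$, which for an integer matrix forces $(A_g - I)^N = 0$, so every eigenvalue of $A_g$ equals $1$.

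Combining the two, $A_g$ is diagonalizable with all eigenvalues $1$, hence $A_g = I$, i.e.\ $g$ centralizes $G_0$. Since $g \in G$ was arbitrary, $G_0 \subseteq Z(G)$. The step I expect to require the most care is the initial structural reduction, identifying $G_0$ with a torus via the reductive-plus-nilpotent argument; once this is in hand the remainder is a short argument about a finite-order integer matrix whose ``shift'' $A_g - I$ is nilpotent.
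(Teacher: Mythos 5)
Your proof is correct, and it takes a genuinely different route from the paper's. Both arguments begin the same way: $G_0$ is a torus, $F = G/G_0$ is finite, and one studies the conjugation action of $F$ on $G_0$ (well-defined because $G_0$ is abelian) and the induced action on $\mathrm{Lie}(G_0) \cong \mathbb{R}^l$. But from there the paper sets up a linear representation $\phi : F \to GL(l,\mathbb{R})$, invokes complete reducibility (Maschke) to split $\mathbb{R}^l = V \oplus W$ with $V$ the fixed subspace, and then uses the nilpotency of $G$ twice --- once to show $V \neq 0$ by iterating $[G, \cdot]$ on $G_0$ to find a central circle, and again on a subtorus built from $W$ to contradict $W \cap V = 0$. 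You instead fix a single $g$ and argue about the integer matrix $A_g \in GL_n(\mathbb{Z})$ of $C_g|_{G_0}$: finiteness of $F$ plus abelianness of $G_0$ forces $A_g^m = I$ (so $A_g$ is semisimple with root-of-unity eigenvalues), while nilpotency forces $(A_g - I)^N$ to map $\mathbb{R}^n$ into $\mathbb{Z}^n$, hence $(A_g-I)^N=0$ (so $A_g$ is unipotent); semisimple and unipotent gives $A_g = I$. Your version isolates the two hypotheses into two eigenvalue constraints on one matrix rather than two applications of a connected-descending-chain argument, and it sidesteps the slightly delicate points in the paper's proof (e.g. that the set $L=\{-w+\phi(\gamma)w\}$ needs to be replaced by its span, and that $\overline{\exp(L)}$ must be checked to be a nontrivial $\phi(F)$-invariant subtorus). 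One small thing worth writing out explicitly is the inductive claim $f_g^k(t) \in C_k(G)$, which is what justifies "$f_g^N$ is trivial for $N$ past the nilpotency class": $f_g(t)=[g,t]\in C_1(G)$, and if $f_g^k(t)\in C_k(G)$ then $f_g^{k+1}(t)=[g,f_g^k(t)]\in[G,C_k(G)]=C_{k+1}(G)$; also note that $f_g$ genuinely maps $G_0$ to itself (since $G_0$ is normal) and is a homomorphism of $G_0$ precisely because $G_0$ is abelian, so the iterates and the identification $f_g = A_g - I$ make sense.
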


\begin{proof}
	The proof is standard in group theory. We include the proof for the convenience of readers.\footnote{The author would like to thank user YCor on Mathoverflow site for providing this proof. http://mathoverflow.net/questions/255976/compact-non-connected-nilpotent-lie-subgroup-of-on.}
	
	Recall that $G_0$, as a connected compact nilpotent Lie group, must be a torus. We put $F=G/G_0$, which is a finite group. For every $\gamma\in F$, we have an automorphism of $G_0$ derived by conjugation of $\gamma$, and thus an automorphism of the Lie algebra $\mathrm{Lie}(G_0)=\mathbb{R}^l$. This provides a representation of $F$ on $\mathbb{R}^l$
	$$\phi: F \to \mathrm{GL}(l,\mathbb{R}).$$
	Let $V$ be the subspace of $\mathbb{R}^l$ where $(F,\phi)$ has trivial representation:
	$$V:=\{v\in\mathbb{R}^l\ |\ \phi(\gamma)v=v \text{ for all } \gamma\in F\}.$$
	It is not difficult to show that $V\not=\{e\}$ because $G$ is nilpotent. In fact, notice that the set
	$$\{[h,t]\ |\ h\in G, t\in G_0\}$$
	is connected and is a subset of $G_0$, thus $[G,G_0]$ is connected. By the same reason, each $[G,[...,[G,G_0]$ is connected. Together with the fact that $G$ is nilpotent, we can find a circle subgroup lying in the center of $G$. This verifies that $V\not=0$.
	
	Since the representation $(F,\phi)$ is completely reducible, $\mathbb{R}^l$ splits as $V\oplus W$, where $W$ is a complement of $V$ and is invariant under $\phi(F)$. The invariance of $W$ shows that $$L:=\{-w+\phi(\gamma)w\ |\ w\in W, \gamma\in F\}$$ is a $\phi(F)$-invariant subspace of $W$. If $W\not=0$, then $L\not=0$. Consider the connected subgroup $T=\overline{\mathrm{exp}(L)}$ in $G_0$. By the nilpotency of $G$ and the same argument that we showed $V\not= 0$, we verify that $L\cap V\not=0$. This contradicts with $W\cap V=0$. Therefore, $W=0$, $V=\mathbb{R}^l$ and this finishes the proof.
\end{proof}

\begin{proof}[Proof of Lemma \ref{comm_bound}]
	(1) We argue by contradiction. Let $Z_i$ be a sequence of spaces in $\mathcal{M}_{cs}(n,0,v)$, and for each $i$ let $K_i$ be a closed nilpotent Lie subgroup of $\mathrm{Isom}(Z)$ with nilpotency length at most $n$. By Lemma \ref{G0_central}, we know that $[K_i:Z(K_i)]$ is finite for each $i$. 
	
	Suppose that $[K_i:Z(K_i)]\to\infty$ as $i\to\infty$. Passing to a subsequence if necessary, we obtain equivariant convergence
	$$(Z_i,K_i)\overset{GH}\longrightarrow (Z_\infty,K_\infty)$$
	for some limit space $Z_\infty\in \mathcal{M}_{cs}(n,0,v)$. 
	
	Since $K_\infty$ is a compact Lie group, we can apply Corollary \ref{good_map_cor} to obtain a sequence of $\epsilon_i$-approximated homomorphisms $\psi_i:K_i\to K_\infty$ for some $\epsilon_i\to 0$. Let $H_i$ be the kernel of $\psi_i$, then
	$$(Z_i,H_i)\overset{GH}\longrightarrow(Z_\infty,\{e\}).$$
	$H_i$ must be the trivial for all $i$ large. Otherwise, by Proposition \ref{nss}, there is $\delta>0$ such that $D(H_i)\ge \delta$ for all $i$, where $D(H_i)$ is the displacement of $H_i$ on $Z_i$; thus $H_i$ can not converge to $\{e\}$. This shows that $\psi_i$ is injective for all $i$ large. With these maps and the fact that $(K_\infty)_0$ is central in $K_\infty$, we see a uniform bound
	$$[K_i:Z(K_i)]\le [K_i:\psi^{-1}((K_\infty)_0)]\le[K_\infty: (K_\infty)_0]<\infty$$
	for all $i$. We obtain the desired contradiction and complete the proof.
	
	(2) follows directly from (1) and Schur's Lemma.
\end{proof}

\Addresses

\end{document}